\def\SSigma_#1{\BS(#1)}
\def\kappa{\varkappa}
\def\({\left(}
\def\){\right)}
\def\[{\left[}
\def\]{\right]}
\def\GL{\mathop{\rm GL}\nolimits}
\def\Gr{\mathop{\rm Gr}\nolimits}
\def\k{\Bbbk}
\def\Frac{\mathop{\rm Frac}\nolimits}
\def\ind{\mathop{\rm ind}\nolimits}
\def\H{\mathop{\mathcal H}\nolimits}
\def\dlim{\mathop{\rm lim}\limits_{\longrightarrow}}
\def\lm{\lambda}
\def\C{\mathbb C}
\def\Z{\mathbb Z}
\def\F{\mathscr F}
\def\BS{\mathrm{BS}}
\def\G{\mathscr G}
\def\<{\langle}
\def\>{\rangle}
\def\n{\mathfrak{n}}
\def\r{\mathfrak{r}}
\renewcommand\emptyset{\varnothing}
\renewcommand\phi{\varphi}
\def\im{\mathop{\rm im}}
\def\id{\mathrm{id}}
\renewcommand\epsilon{\varepsilon}
\def\X{\mathcal X}
\def\suchthat{\mathbin{\rm |}}
\def\ito{\stackrel\sim\to}
\def\pt{{\rm pt}}
\def\pr{\mathop{\rm pr}\nolimits}
\def\q{\mathfrak q}
\def\SL{\mathop{\rm SL}\nolimits}
\def\SU{\mathop{\rm SU}\nolimits}
\def\Ob{\mathop{\rm Ob}}
\def\Lotimes{\stackrel{L}\otimes}
\newtheorem{theorem}{Theorem}
\newtheorem{proposition}[theorem]{Proposition}
\newtheorem{lemma}[theorem]{Lemma}
\newtheorem{corollary}[theorem]{Corollary}
\newtheorem{definition}[theorem]{Definition}
\def\gld{\mathop{\rm gld}}
\def\span{\mathop{\rm span}}
\def\le{\leqslant}
\def\ge{\geqslant}
\def\csh#1#2{\underline{#1}{}_{{}_{\scriptstyle #2}}}
\renewcommand{\labelenumi}{{\rm\theenumi}}
\renewcommand{\theenumi}{{\rm(\arabic{enumi})}}
\def\={\equiv}
\def\2{{(2)}}
\title[Decompositions for cohomologies of Bott-Samelson varieties]{Tensor product decompositions for cohomologies of Bott-Samelson varieties}
\author{Vladimir Shchigolev}
\address{Financial University under the Government of the Russian Federation\\49 Leningradsky Prospekt, Moscow, Russia}
\email{shchigolev\_vladimir@yahoo.com}
\begin{document}

\maketitle

\begin{abstract}
Let $T$ be a maximal torus of a semisimple complex algebraic group, $\BS(s)$ be
the Bott-Samelson variety for a sequence of simple reflections $s$ and $\BS(s)^T$
be the set of $T$-fixed points of $\BS(s)$. We prove the tensor product decompositions
for the image of the restriction $H^\bullet_T(\BS(s),\k)\to H_T^\bullet(X,\k)$,
where $X\subset\BS(s)^T$ is defined by some special not overlapping
equations $\gamma_i\gamma_{i+1}\cdots\gamma_j=w_{i,j}$ with right-hand sides
belonging to the Weyl group.
\end{abstract}

\bigskip

\section{Introduction}

Let $G$ be a semisimple complex algebraic group. We fix a Borel subgroup $B\le G$ and a maximal torus $T\le B$
and denote by $W$ the corresponding Weyl group.
For a sequence of simple reflections $s=(s_1,\ldots,s_n)$, we consider the {\it Bott-Samelson} variety
$$
\BS(s)=P_{s_1}\times\cdots\times P_{s_n}/B^n
$$
where $P_{s_i}=B\cup Bs_iB$ is the minimal parabolic subgroup.
The torus $T$ acts on $\BS(s)$ via the first component. So we can consider
the set of $T$-fixed points $\BS(s)^T$, which can be identified with the set of
combinatorial galleries
$$
\Gamma(s)=\{(\gamma_1,\ldots,\gamma_n)\in W^n\suchthat\forall
i:\gamma_i=1\text{ or }\gamma_i=s_i\}
$$
Let $\k$ be a principal ideal domain with invertible $2$. Then any restriction
$H_T^\bullet(\BS(s),\k)\to H_T^\bullet(\Gamma(s),\k)$ is injective.

Suppose that $X$ is a subset of $\Gamma(s)$. We would like to know what is the image of
the restriction
\begin{equation}\label{eq:intr:1}
H_T^\bullet(\BS(s),\k)\to H_T^\bullet(X,\k).
\end{equation}
This question stated in this generality does not have an answer. However, the answer is know in the following two cases:
$X=\Gamma(s)$ and $X=\{(\gamma_1,\ldots,\gamma_n)\in\Gamma(s)\suchthat\gamma_1\cdots\gamma_n=w\}$ for some $w\in W$.
We denote the image of~(\ref{eq:intr:1}) by $\X(s)$ in the first case and by $\X(s,w)$ in the second case.
Note that $\X(s)\cong H_T^\bullet(\BS(s),\k)$ and $\X(s,w)\cong H_T^\bullet(\pi(s)^{-1}(wB),\k)$, where
$\pi(s):\BS(s)\to G/B$ is the map $(g_1,\ldots,g_n)B^n\mapsto g_1\cdots g_nB$.
The description of $\X(s)$ and $\X(s,w)$ by congruences is given by M.H\"arterich~\cite{Haerterich}.
See also~\cite{BTECBSV} for the coefficients different from fields of characteristic zero.

Instead of a single restriction $\gamma_1\cdots\gamma_n=w$ in the definition of $X$, we can try to impose multiple
restrictions of this form: we take some indexing set of pairs
$
R\subset\{(r_1,r_2)\subset\Z^2\suchthat1\le r_1\le r_2\le n\}
$
and a map $v:R\to W$ and consider the set
$$
X=\{(\gamma_1,\ldots,\gamma_n)\in\Gamma(s)\suchthat\forall r\in R\;\gamma_{r_1}\cdots\gamma_{r_2}=v_r\}.
$$
Let $\X(s,v)$ denote the image of the restriction $H_T^\bullet(\BS(s),\k)\to H_T^\bullet(X,\k)$.
In this paper, we consider only the case where the imposed restrictions {\it do not overlap}, that is,
the case $r_1\le r'_1\le r_2\le r'_2$ is impossible for distinct $r,r'\in R$.

The main results
of this paper
are Theorems~\ref{theorem:bt7:1} and~\ref{theorem:bt7:2}.
They assert that under some restrictions on $(s,v)$,
the module $\X(s,v)$
can be decomposed into a tensor product of some modules $\X(t)$ and $\X(t,w)$ for the corresponding
sequences $t$ and elements $w\in W$.
Moreover, this decomposition is given by the exact formula, see~(\ref{eq:bt7:3}).
The restrictions imposed on $(s,v)$ claim that certain sequences of reflections
associated to this pair be of gallery type, see Definitions~\ref{definition:1} and~\ref{definition:3}.

Perhaps the main application of this result is the possibility to construct elements
of the dual module of $\X(s,v)$ from the elements of the dual modules of the factors $\X(t)$
and $\X(t,w)$ entering into the tensor product decomposition, see Corollaries~\ref{corollary:1} and~\ref{corollary:2}.

The paper is organized as follows. In Section~\ref{Bott-Samelson varieties}, we
fix the notation for simisimple complex algebraic groups, their compact subgroups,
Bott-Samelson varieties $\BS(s)$ and their compactly defined variants $\BS_c(s)$.
To avoid renumeration, we consider the sequences as maps defined on totaly ordered sets,
see Section~\ref{Sequences and groups}.

In Section~\ref{Nested_fibre_bundles}, we recall the main constructions from~\cite{NFBBSV}
concerning nested structures, the subspaces $\BS_c(s,v)$ of $\BS_c(s)$ and the corresponding fibre bundles.
In Section~\ref{Def_equiv}, we consider equivariant cohomologies. The main technical tool here
is the Stiefel manifolds, see Section~\ref{Stiefel manifolds} for the definitions.
In Section~\ref{Twisted action of S}, we also consider the {\it twisted} actions of the equivariant cohomology
of the point on $H^\bullet_K(\BS_c(s),\k)$, where $K$ is a maximal compact torus.
These actions become important later in Section~\ref{The_main_result},
where we consider tensor products of bimodules.

Section~\ref{Tensor products} is devoted to
the proof of Theorem~\ref{theorem:bt7:3} about the tensor product decomposition for $K$-equivariant cohomologies
of the spaces $\BS_c(s,v)$. Here we use the same approach as in~\cite{NFBBSV}, that is,
first we embed Borel constructions into products of Borel constructions (Lemma~\ref{lemma:10}) and then
compute the cohomology of the difference to assure the surjectivity of restriction (Lemma~\ref{lemma:13}).
In this case, we also use fibre bundles for differences (Lemma~\ref{lemma:11}),
as we did in~\cite[Lemma~22]{NFBBSV}.
However in the present case, we need to consider the additional fibre bundle for the composition,
see part~\ref{lemma:11:p:2} of Lemma~\ref{lemma:11}.

Finally, in Section~\ref{Results for the big torus} we come back to the $T$-equivariant cohomologies of $\BS(s)$.
We consider the images $\X(s,v)$ of restriction~(\ref{eq:intr:1}). Theorems~\ref{theorem:bt7:1} and~\ref{theorem:bt7:2}
provide their decompositions into tensor products.
Based on these results, we construct a map for dual modules
in Section~\ref{Dual modules}.

In Section~\ref{Example of a decomposition}, we give an example. It actually states that
$$
H^\bullet_T(\pi(t)^{-1}(\omega_3\omega_4\omega_3B),\k)\otimes_S
H^\bullet_T(\pi(r)^{-1}(\omega_1B),\k)\cong H^\bullet_T(\pi(s)^{-1}(\omega_1\omega_3\omega_4\omega_3B),\k),
$$
where $\omega_1,\omega_2,\omega_2,\omega_4$ are the simple roots for $G=\GL_5(\C)$,
$t=(\omega_4,\omega_3,\omega_4,\omega_3,\omega_4)$, $r=(\omega_2,\omega_1,\omega_2,\omega_1,\omega_2)$
and $s=(\omega_4, \omega_3,  \omega_4, \omega_2,\omega_1,\omega_2,\omega_1,\omega_2, \omega_3,\omega_4)$.
The minimal degree element of the dual module for the right hand-side is a nontrivially twisted product
of the minimal degree elements of the dual modules for the cohomologies of the left-hand side.

As in our previous paper~\cite{NFBBSV}, we consider here only the sheaf cohomology.
We use some standard notation like $\csh{\k}{X}$ for the constant sheaf on a topological space $X$,
$\GL(V)$ for the group of linear isomorphisms $V\to V$, $M_{m,n}(\k)$ for the set of $m\times n$-matrices
with entries in $\k$ and $U(n)$ for the group of unitary $n\times n$-matrices. We also apply
spectral sequences associated with fibre bundles. All of them are first quadrant sequences.

Also note that although principal ideal domains with invertible $2$ are
our main rings of coefficients, we impose on them as little restrictions
as we can in order to prove each separate result about cohomologies.

\section{Bott-Samelson varieties}\label{Bott-Samelson varieties}

\subsection{Compact subgroups of complex algebraic groups}\label{Compact subgroups of complex algebraic groups}
Let $G$ be a semisimple complex group with root system $\Phi$. We assume that $\Phi$ is defined with respect
to the Euclidian space $E$ with the scalar product $(\cdot,\cdot)$. This scalar product
determines a metric on $E$ and therefore a topology. We will denote
by $\overline{A}$ the closure of a subset $A\subset E$.

We consider $G$ as a Chevalley group generated by the root
elements $x_{\alpha}(t)$, where $\alpha\in\Phi$ and $t\in\C$
(see~\cite{Steinberg}). We also fix the following elements of $G$:
$$
w_\alpha(t)=x_\alpha(t)x_{-\alpha}(-t^{-1})x_\alpha(t),\quad
\omega_\alpha=w_\alpha(1),\quad
h_\alpha(t)=w_\alpha(t)\omega_\alpha^{-1}.
$$

For any $\alpha\in\Phi$, we denote by $s_\alpha$ the reflection of $E$
through the hyperplane $L_\alpha$ of the vectors perpendicular to
$\alpha$. We call $L_\alpha$ a {\it wall} perpendicular to
$\alpha$. These reflections generate the Weyl group $W$. We choose
a decomposition $\Phi=\Phi^+\sqcup\Phi^-$ into positive and
negative roots and write $\alpha>0$ (resp. $\alpha<0$) if
$\alpha\in\Phi^+$ (resp. $\alpha\in\Phi^-$). Let
$\Pi\subset\Phi^+$ be the set of simple roots corresponding to
this decomposition. We denote
$$
\mathcal T(W)=\{s_\alpha\suchthat\alpha\in\Phi\},\quad \mathcal
S(W)=\{s_\alpha\suchthat\alpha\in\Pi\}.
$$
and call these sets the {\it set of reflections} and the {\it set
of simple reflections} respectively. We use the standard notation
$\<\alpha,\beta\>=2(\alpha,\beta)/(\beta,\beta)$.

There is the analytic automorphism $\sigma$ of $G$ such that
$\sigma(x_\alpha(t))=x_{-\alpha}(-\bar t)$~\cite[Theorem 16]{Steinberg}.
Here and in what follows $\bar t$ denotes the
complex conjugate of $t$. We denote by $C=G_\sigma$ the set of
fixed points of this automorphism. We also consider the compact
torus $K=T_\sigma=B_\sigma$, where $T$ is the subgroup of $G$
generated by all $h_\alpha(t)$ and $B$ is the subgroup generated
by $T$ and all root elements $x_\alpha(t)$ with $\alpha>0$.

For $\alpha\in\Phi$, we denote by $G_\alpha$ the subgroup of $G$
generated by all root elements $x_\alpha(t)$ and $x_{-\alpha}(t)$
with $t\in\C$. There exists the homomorphism
$\phi_\alpha:\SL_2(\C)\to G$ such that
$$
\(\!
\begin{array}{cc}
1&t\\
0&1
\end{array}\!
\) \mapsto x_\alpha(t),\quad \(\!
\begin{array}{cc}
1&0\\
t&1
\end{array}\!
\) \mapsto x_{-\alpha}(t).
$$
Hence, we get
$$
C\cap G_\alpha=\phi_\alpha(\SU_2).
$$

Let $\mathcal N$ be the subgroup of $G$ generated by the elements
$\omega_\alpha$ and the torus $K$. Clearly $\mathcal N\subset C$.
The arguments of~\cite[Lemma~22]{Steinberg} prove that there
exists an isomorphism $\phi:W\stackrel\sim\to\mathcal N/K$ given
by $s_\alpha\mapsto \omega_\alpha K$.
We choose once and for all, a representative $\dot w\in\phi(w)$ for any $w\in W$. 
Abusing notation, we denote $wK=\dot wK=\phi(w)$.


Now consider the product $C_\alpha=\phi_\alpha(\SU_2)K$.
This is a compact and closed subgroup of $G$.
Moreover, $C_\alpha=C_{-\alpha}$. So we can denote $C_{s_\alpha}=C_\alpha$.
For any $w\in W$, we have $\dot wC_{s_\alpha}\dot w^{-1}=C_{ws_\alpha w^{-1}}=C_{w\alpha}$.

\subsection{Sequences and products}\label{Sequences and groups}
Let $I$ be a finite totally ordered set. We denote by $\le$ ($<$, $\ge$, etc.) the order on it.
We also add two additional elements $-\infty$ and $+\infty$ with
the natural properties: $-\infty<i$ and $i<+\infty$ for any $i\in
I$ and $-\infty<+\infty$. For any $i\in I$, we denote by $i+1$
(resp. $i-1$) the next (resp. the previous) element of
$I\cup\{-\infty,+\infty\}$. We write $\min I$ and $\max I$ for the
minimal and the maximal elements of $I$ respectively and denote
$I'=I\setminus\{\max I\}$ if $I\ne\emptyset$. Note that
$\min\emptyset=-\infty$ and $\max\emptyset=+\infty$. For $i,j\in
I$, we set $[i,j]=\{k\in I\suchthat i\le k\le j\}$.

Any map $s$ from $I$ to an arbitrary set is called a {\it
sequence} on $I$. We denote by $|s|$ the cardinality $|I|$
of $I$ (we will use this notation for any finite set) and call this number the {\it length} of $s$.
For any $i\in I$, we denote by $s_i$ the value of $s$ on $i$.
Finite sequences in the usual sense are sequences on the initial intervals
of the natural numbers, that is, on the sets $\{1,2,\ldots,n\}$.
For any sequence $s$ on a nonempty set $I$, we denote by $s'=s|_{I'}$ its truncation.

We often use the following notation for Cartesian products:
$$
\prod_{i=1}^nX_i=X_1\times X_2\times\cdots\times X_n
$$
and denote by $p_{i_1i_2\cdots i_m}$ the projection of this product to
$X_{i_1}\times X_{i_2}\times\cdots\times X_{i_m}$.
Similar notation is used for tensor products over a commutative ring $\k$:
$$
\mathop{{\bigotimes}}\limits_{i\in I}{}_\k\,M_i=M_{i_1}\otimes_\k\cdots\otimes_\k M_{i_n},
$$
where $I=\{i_1,\ldots,i_n\}$, the elements $i_1,\ldots,i_n$ are distinct and $M_{i_1},\ldots,M_{i_n}$
are $\k$-modules either left of right, which is not important as we consider all such modules automatically
as $\k$-$\k$-bimodules.

If we have maps $f_i:Y\to X_i$ for all
$i=1,\ldots,n$, then we denote by $f_1\boxtimes\cdots\boxtimes f_n$ the map that takes
$y\in Y$ to the $n$-tuple $(f_1(y),\ldots,f_n(y))$. To avoid too many superscripts, we denote
$$
X(I)=\prod_{i\in I}X.
$$
We consider all such products with respect to the product topology if all factors are topological spaces.
Suppose that $X$ is a group. Then $X(I)$ is a topological group with respect to the componentwise
multiplication. In that case, for any $i\in I$ and $x\in X$, we
consider the {\it indicator sequence} $\delta_i(x)\in X(I)$ defined by
$$
\delta_i(x)_j=\left\{
\begin{array}{ll}
x&\text{ if }j=i;\\
1&\text{ otherwise}
\end{array}
\right.
$$
Let us additionally assume that $I$ is embedded into a totally ordered set $J$.
Then for any 
$j\in J\cup\{-\infty\}$
and a sequence $\gamma$ on $I$, we will use the notation
$\gamma^j=\gamma_{\min I}\gamma_{\min I+1}\cdots\gamma_i$,
where $i$ is the maximal element of $I$ less than or equal to $j$ or $-\infty$ if there is no such element.
Obviously $\gamma^{-\infty}=1$. We set
$\gamma^{\max}=\gamma^{\max I}$.

We will also use the following notation. Let $G$ be a group. If $G$ acts on the set $X$, then we denote by $X/G$
the set of $G$-orbits of elements of $X$. Suppose that $\phi:X\to Y$ is a $G$-equivariant map.
Then $\phi$ maps any $G$-orbit to a $G$-orbit. We denote the corresponding map from $X/G$ to $Y/G$
by $\phi/G$.

\subsection{Galleries}\label{Galleries}
As $L_{-\alpha}=L_\alpha$ for any root $\alpha\in\Phi$, we can denote $L_{s_\alpha}=L_\alpha$.
For any $w\in W$, we get
$wL_{s_\alpha}=wL_\alpha=L_{w\alpha}=L_{s_{w\alpha}}=L_{ws_\alpha
w^{-1}}$.

A {\it chamber} is a connected component of the space
$E\setminus\bigcup_{\alpha\in\Phi}L_\alpha$. We denote by
$\mathbf{Ch}$ the set of chambers and by $\mathbf L$ the set of
walls $L_{s_\alpha}$. We say that a chamber $\Delta$ is {\it
attached} to a wall $L$ if the intersection $\overline\Delta\cap
L$ has dimension $\dim E-1$. The {\it fundamental} chamber is
defined by
$$
\Delta_+=\{v\in E\suchthat (v,\alpha)>0\text{ for any
}\alpha\in\Pi\}.
$$
A {\it labelled gallery} on a finite totaly ordered set $I$ is a
pair of maps $(\Delta,\mathcal L)$, where
$\Delta:I\cup\{-\infty\}\to\mathbf{Ch}$ and $\mathcal
L:I\to\mathbf L$ are such that for any $i\in I$,
both chambers $\Delta_{i-1}$ and $\Delta_i$ are attached to
$\mathcal L_i$. The Weyl group $W$ acts on the set of labelled
galleries by the rule $w(\Delta,\mathcal
L)=(w\Delta,w\mathcal L)$, where $(w\Delta)_i=w\Delta_i$ and
$(w\mathcal L)_i=w\mathcal L_i$.

For a sequence $s:I\to\mathcal T(W)$, we define the set of {\it
generalized combinatorial galleries}
$$
\Gamma(s)=\{\gamma:I\to W\suchthat\gamma_i=s_i\text{ or
}\gamma_i=1\text{ for any }i\in I\}.
$$
If $s$ maps $I$ to $\mathcal S(W)$, then the elements of
$\Gamma(s)$ are called {\it combinatorial galleries}.
To any combinatorial galley $\gamma\in\Gamma(s)$ there corresponds
the following labelled gallery:
\begin{equation}\label{eq:5}
i\mapsto \gamma^i\Delta_+,\qquad i\mapsto
L_{\gamma^is_i(\gamma^i)^{-1}}.
\end{equation}
%
Moreover, any labelled gallery $(\Delta,\mathcal L)$ on $I$ such
that $\Delta_{-\infty}=\Delta_+$ can be obtained this way (for the
corresponding $s$ and $\gamma$).

Let $\gamma\in\Gamma(s)$, where $s:I\to\mathcal T(W)$.
%
We consider the sequence $s^{(\gamma)}:I\to\mathcal T(W)$ defined
by
$$
s^{(\gamma)}_i=\gamma^is_i(\gamma^i)^{-1}.
$$
To understand, what happens if we apply this operation several
times, we consider the {\it composition} of generalized
combinatorial galleries. Let $\lm\in\Gamma(s^{(\gamma)})$ be
another generalized combinatorial gallery. Then we define
$\gamma\circ\lm\in\Gamma(s)$ by
$$
(\gamma\circ\lm)_i=(\gamma^{i-1})^{-1}\lm_i\gamma^i.
$$
It is easy to check that $(\gamma\circ\lm)^i=\lm^i\gamma^i$. Hence
$$
\big(s^{(\gamma)}\big)^{(\lm)}=s^{(\gamma\circ\lm)}.
$$
Moreover, this composition is associative: for any
$\delta\in\Gamma(s^{(\gamma\circ\lm)})$, we have
$$
(\gamma\circ\lambda)\circ\delta=\gamma\circ(\lambda\circ\delta).
$$
We also get $\gamma\circ\epsilon=\gamma$ and
$\epsilon\circ\lm=\lm$, where $\epsilon$ is the map $I\to\{1\}$.
The inverse of $\gamma$ is $\gamma^{-1}\in\Gamma(s^{(\gamma)})$
given by
$$
(\gamma^{-1})_i=\gamma^{i-1}(\gamma^i)^{-1}.
$$
We have $(\gamma^{-1})^i=(\gamma^i)^{-1}$. Applying this fact, it
is easy to prove that
$\gamma\circ\gamma^{-1}=\gamma^{-1}\circ\gamma=\epsilon$ and
$(\gamma^{-1})^{-1}=\gamma$. Finally, note that
$s^{(\epsilon)}=s$.

Let $w\in W$ and $s:I\to\mathcal T(W)$. Then we define the
sequence $s^w:I\to\mathcal T(W)$ by $(s^w)_i=ws_iw^{-1}$. For any
$\gamma\in\Gamma(s)$, we denote by $\gamma^w$ the generalized
combinatorial gallery of $\Gamma(s^w)$
defined by $\gamma_i^w=w\gamma_i w^{-1}$.

\begin{definition}\label{definition:1}
A sequence $s:I\to\mathcal T(W)$ is of galley type if there exists
a labelled gallery $(\Delta,\mathcal L)$ on $I$ such that
$\mathcal L_i=L_{s_i}$ for any $i\in I$.
\end{definition}

In this case, there exist an element $x\in W$, a sequence of simple
reflections $t:I\to\mathcal S(W)$ and a combinatorial gallery
$\gamma\in\Gamma(t)$ and  such that $x\Delta_{-\infty}=\Delta_+$
and $x(\Delta,\mathcal L)$ corresponds to $\gamma$
by~(\ref{eq:5}).
We call $(x,t,\gamma)$ a {\it gallerification} of $s$.
This fact is equivalent to $t^{(\gamma)}=s^x$.

Note that for any $w\in W$ a sequence $s$ is of gallery type if and only if the sequence $s^w$
is of gallery type.

{\bf Remark.} If $\Phi$ is of type $A_1$ or $A_2$, then every sequence $s:I\to\mathcal T(W)$
is of gallery type~\cite[Example]{NFBBSV}.

\subsection{Definition via the Borel subgroup} For any simple reflection $t\in\mathcal S(W)$,
we consider the minimal parabolic subgroup $P_t=B\cup BtB$. Note
that $C_t$ is a maximal compact subgroups of $P_t$ and
$C_t=P_t\cap C$. Let $s:I\to\mathcal S(W)$ be a sequence of simple
reflections. We consider the space
$$
P(s)=\prod_{i\in I}P_{s_i}
$$
with respect to the product topology. The group $B(I)$ acts on
$P(s)$ on the right by $(pb)_i=b_{i-1}^{-1}p_ib_i$. Here and in what follows, we assume $b_{-\infty}=1$. Let
$$
\BS(s)=P(s)/B(I).
$$
This space is compact and Hausdorff. The Borel subgroup $B$ acts continuously on $P(s)$ by
$$
(bp)_i=\left\{
\begin{array}{ll}
bp_i&\text{ if }i=\min I;\\
p_i&\text{ otherwise.}
\end{array}
\right.
$$
As this action commutes with the right action of $B(I)$ on $P(s)$
described above, we get the left action of $B$ on $\BS(s)$.

We also have the map $\pi(s):\BS(s)\to G/B$ that maps $pB(I)$ to $p^{\max}B$.
This map is invariant under the left action of $B$. However, we
need here only the action of the torus $T$. We have
$\BS(s)^T=\{\gamma B(I)\suchthat\gamma\in\Gamma(s)\}$, where
$\gamma$ is identified with the sequence $i\mapsto\dot\gamma_i$ of
$P(s)$. We will identify $\BS(s)^T$ with $\Gamma(s)$. For any
$w\in\Gamma(s)$, we set $\BS(s,w)=\pi(s)^{-1}(wB)$.

\subsection{Definition via the compact torus}\label{Definition via the compact torus}

Let $s:I\to\mathcal T(W)$ be a sequence of reflections. We
consider the space
$$
C(s)=\prod_{i\in I}C_{s_i}
$$
with respect to the product topology. The group $K(I)$ acts on
$C(s)$ on the right by $(ck)_i=k_{i-1}^{-1}c_ik_i$. Here and in what follows, we assume $k_{-\infty}=1$. Let
$$
\BS_c(s)=C(s)/K(I).
$$
We denote by $[c]$ the right orbit $cK(I)$ of $c\in C(s)$. 
The space $\BS_c(s)$ is compact and Hausdorff.

The action of $K(I)$ on $C(s)$ described above commutes with the
following left action of $K$:
$$
(kc)_i=\left\{
\begin{array}{ll}
kc_i&\text{ if }i=\min I;\\
c_i&\text{ otherwise.}
\end{array}
\right.
$$
Therefore, $K$ acts continuously on $\BS_c(s)$ on the left: $k[c]=[kc]$.

We also have the map $\pi_c(s):\BS_c(s)\to C/K$ that maps
$[c]$ to $c^{\max}K$. This map is obviously invariant under the left action of $K$. We
have $\BS_c(s)^K=\{[\gamma]\suchthat\gamma\in\Gamma(s)\}$, where
$\gamma$ is identified with the sequence $i\mapsto\dot\gamma_i$ of
$C(s)$. We also identify $\BS_c(s)^K$ with $\Gamma(s)$. For any
$w\in W$, we set $\BS_c(s,w)=\pi_c(s)^{-1}(wK)$.
The set of $K$-fixed points of this space is
$$
\BS_c(s,w)^K=\{\gamma\in\Gamma(s)\suchthat\gamma^{\max}=w\}=\Gamma(s,w).
$$

For any $w\in W$, let $l_w:C/K\to C/K$ and $r_w:C/K\to C/K$ be the
maps given by $l_w(cK)=\dot w cK$ and $r_w(cK)=c\dot wK$. They are
obviously well-defined and $l_w^{-1}=l_{w^{-1}}$,
$r_w^{-1}=r_{w^{-1}}$. Moreover, $l_w$ and $r_{w'}$ commute for
any $w,w'\in W$ .
%
%
Let $d_w:\BS_c(s)\to\BS_c(s^w)$ be the map given by
$d_w([c])=[c^w]$, where $(c^w)_i=\dot wc_i\dot w^{-1}$. This map
is well-defined and a homeomorphism. Moreover, $d_w(ka)=\dot
wk\dot w^{-1}a$ for any $k\in K$ and $a\in\BS_c(s)$. We have the
commutative diagram
$$
\begin{tikzcd}[column sep=15ex]
\BS_c(s)\arrow{r}{d_w}[swap]{\sim}\arrow{d}[swap]{\pi_c(s)}&\BS_c(s^w)\arrow{d}{\pi_c(s^w)}\\
C/K\arrow{r}{l_wr_w^{-1}}[swap]{\sim}&C/K
\end{tikzcd}
$$
Hence for any $x\in W$, the map $d_w$ yields the isomorphism
\begin{equation}\label{eq:6}
\BS_c(s,x)\cong\BS(s^w,wxw^{-1}).
\end{equation}

There is a similar construction for a generalized combinatorial gallery
$\gamma\in\Gamma(s)$.
We define the map $D_\gamma:\BS_c(s)\to\BS_c(s^{(\gamma)})$ by
$$
D_\gamma([c])=[c^{(\gamma)}],\text{ where
}c^{(\gamma)}_i=(\dot\gamma_{i-1}\dot\gamma_{i-2}\cdots\dot\gamma_{\min
I})^{-1}c_i\dot\gamma_i\dot\gamma_{i-1}\cdots\dot\gamma_{\min I}.
$$
This map is well-defined. Indeed let $[c]=[\tilde c]$ for some
$c,\tilde c\in C(s)$. Then $\tilde c=ck$ for some $k\in K(I)$.
Then $\tilde c^{(\gamma)}=c^{(\gamma)}k'$, where
$k'_i=(\dot\gamma_i\dot\gamma_{i-1}\cdots\dot\gamma_{\min I})^{-1}k_i\dot\gamma_i\dot\gamma_{i-1}\cdots\dot\gamma_{\min I}$.
The map $D_\gamma$ is obviously continuous and a $K$-equivariant
homeomorphism. We have the following commutative diagram:
$$
\begin{tikzcd}[column sep=15ex]
\BS_c(s)\arrow{r}{D_\gamma}[swap]{\sim}\arrow{d}[swap]{\pi_c(s)}&\BS_c(s^{(\gamma)})\arrow{d}{\pi_c(s^{(\gamma)})}\\
C/K\arrow{r}{r_{\gamma^{\max I}}^{-1}}[swap]{\sim}&C/K
\end{tikzcd}
$$
Hence for any $x\in W$, the map $D_\gamma$ yields the isomorphism
\begin{equation}\label{eq:7}
\BS_c(s,x)\cong\BS_c(s^{(\gamma)},x(\gamma^{\max})^{-1}).
\end{equation}

\subsection{Isomorphism of two constructions}\label{Isomorphism_of_two_constructions}
Consider the Iwasawa decomposition $G=CB$
(see, for example,~\cite[Theorem 16]{Steinberg}). Hence for any
simple reflection $t$, the natural embedding $C_t\subset P_t$
induces the isomorphism of the left cosets $C_t/K\stackrel\sim\to P_t/B$. 
Therefore, for any sequence of simple reflections $s:I\to\mathcal
S(W)$, the natural embedding $C(s)\subset P(s)$ induces the
homeomorphism $\BS_c(s)\stackrel\sim\to\BS(s)$. This homeomorphism
is clearly $K$-equivariant. Moreover, the following diagram is
commutative:
$$
\begin{tikzcd}
\BS_c(s)\arrow{r}{\sim}\arrow{d}&\BS(s)\arrow{d}\\
C/K\arrow{r}{\sim}&G/B
\end{tikzcd}
$$
Note that $\BS_c(s)^K$ is mapped to $\BS(s)^K=\BS(s)^T$. These
sets are identified with $\Gamma(s)$.
%

Any isomorphism of finite totally ordered set $\iota:J\to I$
induces the obvious homeomorphisms
$\BS(s)\stackrel\sim\to\BS(s\iota)$ and
$\BS_c(s)\stackrel\sim\to\BS_c(s\iota)$. They obviously respect
the isomorphisms described above.

\section{Nested fibre bundles}\label{Nested_fibre_bundles}

\subsection{Nested structures}\label{Nested_structures} Here we recall the main constructions of~\cite{NFBBSV}.
Let $I$ be a finite totally ordered set.
We denote the order on it by $\le$ (respectively, $\ge$, $<$
etc.).
Let $R$ be a subset of $I^2$. For any $r\in R$, we denote by $r_1$
and $r_2$ the first and the second component of $r$ respectively.
So we have $r=(r_1,r_2)$. We will also use the notation
$[r]=[r_1,r_2]$ for the intervals. We say that $R$ is a {\it
nested structure} on $I$ if
\smallskip
\begin{itemize}
\item $r_1\le r_2$ for any $r\in R$;\\[-8pt]
\item $\{r_1,r_2\}\cap\{r'_1,r'_2\}=\emptyset$ for any $r,r'\in R$;\\[-8pt]
\item for any $r,r'\in R$, the intervals $[r]$ and $[r']$ are
either disjoint or
      one of them is contained in the other.
\end{itemize}
\smallskip
Abusing notation, we will write
\begin{itemize}
\item $r\subset r'$ (resp. $r\subsetneq r'$) to say that $[r]\subset[r']$ (resp. $[r]\subsetneq[r']$);\\[-8pt]
\item $r<r'$ to say that $r_2<r'_1$;
\end{itemize}

Let $s:I\to\mathcal T(W)$ and $v:R\to W$ be arbitrary maps. We
define
\begin{equation}\label{eq:bt7:10}
\BS_c(s,v)=\{[c]\in\BS_c(s)\suchthat\forall r\in R:
c_{r_1}c_{r_1+1}\cdots c_{r_2}\in v_rK\}.
\end{equation}
This set is well-defined, as $\dot v_rK=K\dot v_r$ and is closed in $\BS_c(s)$.
Consider the maps $\xi_r:C(s)\to C/K$ defined by
$\xi_r(c)=c_{r_1}c_{r_1+1}\cdots c_{r_2}K$. Then $\BS_c(s,v)$ is
the image of the intersections
$$
C(s,v)=\bigcap_{r\in R}\xi_r^{-1}(v_rK)
$$
under the quotient map $C(s)\to\BS_c(s)$.

Let $F\subset R$ be a nonempty subset such that the intervals $[f]$, where $f\in F$,
are pairwise disjoint. In what follows, we use the notation
$$
F=\{f^1,\ldots,f^\n\}
$$
with elements written in the increasing order:
$f^1<f^2<\cdots<f^\n$.
We set
$$
I^F=I\setminus\bigcup_{f\in F}[f],\quad R^F=R\setminus\{r\in R\suchthat\exists f\in F: r\subset f\}.
$$
Obviously, $R^F$ is a nested structure on $I^F$.

We also introduce the following auxiliary notation. Let $i\in I^F$.
We choose $m=0,\ldots,\n$ so that $f^m_2<i<f^{m+1}_1$.
In these inequalities and in what follows, we assume that $f^0_2=-\infty$ and
$f^{\n+1}_1=+\infty$. We set
$$
v^i=v_{f^1}v_{f^2}\cdots v_{f^m},\quad
\dot v^i=\dot v_{f^1}\dot v_{f^2}\cdots\dot v_{f^m}.
$$
Clearly, $v^i$ is image of $\dot v^i$ under the quotient
homomorphism $\mathcal N\to W$.
We define the sequences $s^F:I^F\to\mathcal T(W)$ and $v^F:R^F\to W$ by
$$
s^F_i=v^is_i(v^i)^{-1},\quad v^F_r=v^{r_1}v_r(v^{r_2})^{-1}.
$$


\begin{definition}
A sequence $c\in C(s,v)$ is called $F$-balanced if
\begin{equation}\label{eq:coh:2.5}
c_{f_1}c_{f_1+1}\cdots c_{f_2}=\dot v_f
\end{equation}
for any $f\in F$.
\end{definition}

It is easy to prove that for any $a\in\BS_c(s,v)$, there exists some $F$-balanced $c\in C(s,v)$
such that $a=[c]$.
For an $F$-balanced $c\in C(s,v)$, we define the sequence $c^F\in C(s^F)$ by
$$
c^F_i=\dot v^ic_i(\dot v^i)^{-1}.
$$
Then we set $p^F([c])=[c^F]$. By~\cite[Lemma~4]{NFBBSV}, the map $p^F$ is a well-defined,
$K$-equivariant and continuous map from $\BS_c(s,v)$ to $\BS_c(s^F,v^F)$. We call it the {\it projection
along} $F$.

A nested structure $R$ on $I$ is called {\it closed} if $I\ne\emptyset$ and
it contains the pair $(\min I,\max I)$, which we denote by $\span I$.

Let $f\in F$. For any sequence $\gamma$ on $I$, we denote by $\gamma_f$
the restriction of $\gamma$ to $[f]$. We also set
$$
R_f=\{r\in R\suchthat r\subset f\},\quad v_f=v|_{R_f}.
$$
From this construction, it is obvious that $R_f$ is a closed nested structure on $[f]$.
By~\cite[Theorem~7]{NFBBSV},
the map $p^F:\BS_c(s,v)\to\BS_c(s^F,v^F)$ is a fibre bundle with fibre
$\BS_c(s_{f^1},v_{f^1})\times\cdots\times\BS_c(s_{f^{\n}},v_{f^{\n}})$.

\subsection{Affine pavings}\label{Affine_pavings} We say that a topological space $X$ has an {\it affine paving} if there exists a filtration
$$
\emptyset=X_0\subset X_1\subset X_2\subset\cdots\subset X_{n-1}\subset X_n=X
$$
such that all $X_i$ are closed and each difference $X_i\setminus X_{i-1}$ is homeomorphic to $\C^{m_i}$
for some integer $m_i$.

Let $r\in R$ and $r^1,\ldots,r^\mathfrak q$ be the maximal (with respect to the inclusion)
elements of those elements of $R$ that are strictly contained in $r$. We write them in the increasing order
$r^1<\cdots<r^\mathfrak q$.
We set
$$
I(r,R)=[r]\setminus\bigcup_{r'\in R\atop r'\subsetneq r}[r']=[r]\setminus\bigcup_{m=1}^{\mathfrak q}[r^m].
$$
We define the sequence  $s^{(r,v)}:I(r,R)\to\mathcal T(W)$ by
$$
s^{(r,v)}_i=v_{r^1}\cdots v_{r^m}s_i(v_{r^1}\cdots v_{r^m})^{-1},
$$
where $r_2^m<i<r_1^{m+1}$. Here we suppose as usually that $r_2^0=-\infty$ and $r_1^{\q+1}=+\infty$.

\begin{definition}\label{definition:3}
Let $s$ be a sequence of reflections on $I$ and $v$ be a map from a nested structure $R$ on $I$
to $W$. We say that the pair $(s,v)$ is {\it of gallery type} if for any $r\in R$,
the sequence $s^{(r,v)}$ is of gallery type.
\end{definition}

The following results are Lemma~11 and Corollary~12 from~\cite{NFBBSV}.

\begin{proposition}\label{proposition:bt10:1}
If $(s,v)$ is of gallery type, then $(s^F,v^F)$ and $(s_{f^1},v_{f^1}),\ldots,(s_{f^{\n}},v_{f^{\n}})$
are also of gallery type. Therefore, $\BS_c(s,v)$ has an affine paving.
\end{proposition}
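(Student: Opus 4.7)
The statement splits into two parts: (a) that the gallery-type property descends to the contracted pair $(s^F, v^F)$ and to each inner pair $(s_{f^k}, v_{f^k})$; and (b) that $\BS_c(s,v)$ admits an affine paving. I would prove (a) by direct combinatorial unravelling of the definitions and then use (a) together with the fibre bundle of Section~\ref{Nested_structures} to run an induction for (b).

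For the inner pairs, fix $f \in F$ and $r' \in R_f$. Because $R$ is a nested structure and $R_f \subset R$ consists exactly of those $r \in R$ with $r \subset f$, the maximal elements of $R_f$ strictly contained in $r'$ coincide with the maximal elements of $R$ strictly contained in $r'$. Consequently $(s_f)^{(r', v_f)} = s^{(r', v)}$, so gallery type is inherited verbatim. For $(s^F, v^F)$, fix $r' \in R^F$. The maximal sub-elements of $r'$ in $R^F$ are of two kinds: those $r \in R$ that are maximal sub-elements of $r'$ in $R$ and that happen to lie in $R^F$, together with the elements $f \in F$ with $f \subsetneq r'$. A careful bookkeeping using $v^i = v_{f^1}\cdots v_{f^m}$ and the definition $s^F_i = v^i s_i (v^i)^{-1}$ shows that $(s^F)^{(r', v^F)}$ is the conjugate of $s^{(r', v)}|_{I(r',R)\cap I^F}$ by a fixed element of $W$ (namely $v^{r'_1}$). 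Since conjugation and restriction to a subinterval of a labelled gallery preserve the gallery property, $(s^F)^{(r', v^F)}$ is of gallery type.

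For the affine paving, I would induct on $|R|$. When $R = \{\span I\}$ (or more generally when $R$ is small enough that the reduction produces a single factor), the gallery-type hypothesis combined with a gallerification $(x, t, \gamma)$ of $s^{(\span I, v)}$ identifies $\BS_c(s, v)$ — via the homeomorphisms $d_w$ and $D_\gamma$ of~(\ref{eq:6})--(\ref{eq:7}) — with a fibre $\BS(t, w)$ of the standard Bott-Samelson variety for a sequence of simple reflections, which has an affine paving by the Bia\l ynicki-Birula decomposition arising from the $T$-action. For the inductive step, take $F$ to be the set of maximal elements of $R$. By~\cite[Theorem~7]{NFBBSV} and the just-established fact that $(s^F, v^F)$ and each $(s_{f^k}, v_{f^k})$ are of gallery type, the map $p^F : \BS_c(s,v) \to \BS_c(s^F, v^F)$ is a fibre bundle whose base and whose fibre $\prod_k \BS_c(s_{f^k}, v_{f^k})$ carry affine pavings by the induction hypothesis (both involve strictly fewer nested elements).

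The main obstacle is producing the affine paving on the total space from affine pavings on base and fibre: one needs to know that $p^F$ is Zariski-locally trivial with affine transition maps, so that the pullback of an affine cell of the base, intersected with the preimage of a cell in the fibre, is again an affine cell. This should follow from the explicit construction of $p^F$ in~\cite{NFBBSV} as a quotient of a principal bundle by a unipotent-type structure group, but that is the step that needs the most care and is the reason the result is cited from there.
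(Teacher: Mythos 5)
The paper gives no proof of this proposition; it is quoted verbatim from~\cite[Lemma~11, Corollary~12]{NFBBSV}, so there is no internal argument to compare against. Taking your sketch on its own terms: the argument for the inner pairs $(s_{f^m},v_{f^m})$ is correct. For $(s^F,v^F)$ your conclusion is right but your description of the maximal sub-elements of $r'$ in $R^F$ is not. Since $f\subset f$, every $f\in F$ is excluded from $R^F$, so $F\cap R^F=\emptyset$; the maximal sub-elements of $r'$ in $R^F$ are exactly the maximal sub-elements of $r'$ in $R$ that do \emph{not} lie in $F$, while those that do lie in $F$ simply drop out with nothing replacing them. One then checks that $I(r',R^F)=I(r',R)$ (every $f\in F$ with $f\subsetneq r'$ is contained in some maximal sub-element of $r'$ in $R$, hence $[f]\cap I(r',R)=\emptyset$) and that $(s^F)^{(r',v^F)}_i=v^{r'_1}\,s^{(r',v)}_i\,(v^{r'_1})^{-1}$ for all $i$. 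Thus only conjugation is actually involved, and your appeal to ``restriction to a subinterval of a labelled gallery preserves the gallery property'' turns out to be unnecessary---which is fortunate, because that principle is \emph{false} in general: in type $B_2$ the sequence $(s_{e_1},s_{e_1-e_2},s_{e_2})$ is of gallery type while its restriction to $\{1,3\}$ is not, since no chamber has both $L_{e_1}$ and $L_{e_2}$ among its walls.

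The real gap is the one you flag in part~(b). Your base case appeals to the Bialynicki--Birula decomposition for $\BS(t,w)=\pi(t)^{-1}(wB)$; but that fibre need not be smooth, so BB does not apply directly, and the paving in that case comes from an explicit Bott-tower stratification (cf.\ H\"arterich). More seriously, the inductive step requires producing an affine paving of the total space of the topological fibre bundle $p^F$ from affine pavings of base and fibre, which is not a formal consequence: one needs the local trivializations of $p^F$ to be compatible with the chosen filtrations, so that the intersection of a cell preimage from the base with a cell preimage from the fibre is again an affine cell. That compatibility is exactly what the paper delegates to~\cite{NFBBSV}; as written, your proposal cannot be completed without reconstructing that argument.
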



\section{Equivariant cohomology}\label{Equivariant cohomology}

\subsection{Definitions}\label{Def_equiv}
Let
$p_T:E_T\to B_T$
be a universal principal $T$-bundle. For any $T$-space $X$, we consider the {\it Borel construction}
$X\times_TE_T=(X\times E_T)/T$, where $T$ acts on the Cartesian product diagonally: $t(x,e)=(tx,te)$.
Then we define the {\it $T$-equivariant cohomology of $X$ with coefficients $\k$} by
$$
H^\bullet_T(X,\k)=H^\bullet(X\times_TE_T,\k).
$$
This definition a priory depends on the choice of a universal principal $T$-bundle.
However, all these cohomologies are isomorphic. Indeed let
$p'_T:E'_T\to B'_T$
be another universal principal $T$-bundle. We consider the following diagram (see~\cite[1.4]{Jantzen}):
\begin{equation}\label{eq:bt6:2.5}
\begin{tikzcd}
{}&(X\times E_T\times E'_T)/T\arrow{rd}{p_{13}/T}\arrow{ld}[swap]{p_{12}/T}&\\
X\times_TE_T&&X\times_TE'_T
\end{tikzcd}
\end{equation}
where the $T$ acts diagonally on $X\times E_T\times E'_T$. 
As $p_{12}/T$ and $p_{13}/T$ are fibre bundles with fibres $E'_T$
and $E_T$ respectively, we get by the Vietoris-Begle mapping theorem
the following diagram for cohomologies:
\begin{equation}\label{eq:bt6:2}
\begin{tikzcd}
{}&H^\bullet((X\times E_T\times E'_T)/T,\k)&\\
H^\bullet(X\times_TE_T,\k)\arrow{ru}[swap]{\sim}{(p_{12}/T)^*}&&H^\bullet(X\times_TE'_T,\k)\arrow{lu}{\sim}[swap]{(p_{13}/T)^*}
\end{tikzcd}
\end{equation}
It allows us to identify the cohomologies $H^\bullet(X\times_TE_T,\k)$ and $H^\bullet(X\times_TE'_T,\k)$.
The reader can easily check that this identification respects composition and
is identical if both universal principal bundles are equal.
%

Similar constructions are possible for the compact torus $K$: for a $K$-space $X$, we define
$$
H^\bullet_K(X,\k)=H^\bullet(X\times_KE_K,\k),
$$
where
$p_K:E_K\to B_K$
is a universal principal $K$-bundle. For any $T$-space, we have $H^\bullet_T(X,\k)\cong H^\bullet_K(X,\k)$,
see for example~\cite[1.6]{Jantzen}.

\subsection{Stiefel manifolds}\label{Stiefel manifolds} Being a Chevalley group, the group $G$ admits an embedding $G\le\GL(V)$
for some faithful representation $V$ of the Lie algebra of $G$. Let $V=V_1\oplus\cdots\oplus V_k$
be a decomposition into a direct sum of irreducible $G$-modules. By~\cite[Chapter 12]{Steinberg}, there exist for each $i$ a positive
definite Hermitian form $\<,\>_{V_i}$ on $V_i$ such that $\<gu,v\>_{V_i}=\<u,\sigma(g^{-1})v\>_{V_i}$ for any $x\in G$
and $u,v\in V_i$, where $\sigma$ is the automorphism of $G$ defined in Section~\ref{Compact subgroups of complex algebraic groups}.
Their direct sum $\<,\>_{V}$ is a positive definite Hermitian form on $V$ satisfying the same property.
We consider the unitary group
$$
U(V)=\{g\in\GL(V)\suchthat\<gu,gv\>_{V}=\<u,v\>_V\;\forall u,v\in{V}\},
$$
for which we have $U(V)\cap G=C$. Choosing an orthonormal basis $(v_1,\ldots,v_{\mathfrak r})$ of $V$,
we get an isomorphism $U(V)\stackrel\sim\to U(\mathfrak r)$ taking an operator of $U(V)$ to its matrix in this basis.

For any natural number $N\ge\r$,
we consider the {\it Stiefel manifold}\footnote{This space is usually denoted by $V_\r(\C^N)$ or $\C V_{N,\r}$.
We also transpose matrices, as we want to have a left action of $U(\r)$}
$$
E^N=\{A\in M_{\r,N}(\C)\suchthat A\bar A^T=I_\r\},
$$
where $M_{\r,N}(\C)$ is the space of $\r\times N$ matrices with respect to metric topology and $I_\r$ is the identity
matrix. The group $U(\r)$ of unitary $\r\times \r$ matrices acts on $E^N$ on the left by multiplication.
Similarly, $U(N)$ acts on $E^N$ on the right. The last action is transitive and both actions commute.
The quotient space $\Gr^N=E^N/U(\r)$ is called a {\it Grassmanian} and the corresponding
quotient map $E^N\to\Gr^N$ is a principal $U(\r)$-bun\-dle. Note that the group $U(N)$ also acts on $\Gr^N$
by the right multiplication. For $N'>N$, we get the embedding $E^N\hookrightarrow E^{N'}$ by adding $N'-N$
zero columns to the right.

Taking the direct limits
$$
E^\infty=\dlim E^N,\quad \Gr=\dlim\Gr^N,
$$
we get a universal principal $U(\r)$-bundle $E^\infty\to\Gr$.

We need the spaces $E^N$ to get the principal $K$-bundles $E^N\to E^N/K$.
It is easy to note that this bundle for $N=\infty$ is the direct limit of the bundles for $N<\infty$.

Note that for any $K$-space $X$, the group $U(N)$ acts on $X\times_KE^N$ on the right by $K(x,e)g=K(x,eg)$,
where $x\in X$, $e\in E^N$ and $g\in U(N)$.

\subsection{Equivariant cohomology of a point}\label{Equivariant_cohomology_of_a_point}
We denote by $S=H^\bullet_T(\pt,\k)$ the equivariant cohomology of a point.
It is well known that $S$ is a polynomial ring with zero odd degree component. More exactly, let $\mathfrak X(T)$ be the group of all
continuous homomorphisms $T\to\C^\times$. For each $\lm\in\mathfrak X(T)$, let $\C_\lm$ be the $\C T$-module that is equal
to $\C$ as a vector space and has the following  $T$-action: $tc=\lm(t)c$.
Then we have the line bundle $\C_\lm\times_T E_T\to B_T$ denoted by $\mathcal L_T(\lm)$,
where $E_T\to B_T$ is a universal principal $T$-bundle. We get the map ${\mathfrak X}(T)\to H^2_T(\pt)=H^2(B_T,\k)$ given by
$\lm\mapsto c_1(\mathcal L_T(\lm))$, where $c_1$ denotes the first Chern class,
which extends to the isomorphism with the symmetric algebra:
\begin{equation}\label{eq:pt:1}
\mathop{\mathrm{Sym}}({\mathfrak X}(T)\otimes_\Z\C)\ito S.
\end{equation}

Similarly, let $\mathfrak X(K)$ be the group of continuous homomorphisms $K\to\C^\times$.
For each $\lm\in\mathfrak X(K)$, we have the bundle $\mathcal L_K(\lm)$ similar to $\mathcal L_T(\lm)$.
Therefore, we have the isomorphism
\begin{equation}\label{eq:pt:1.5}
\mathop{\mathrm{Sym}}(\mathfrak X(K)\otimes_\Z\C)\ito S
\end{equation}
induced by $\lm\mapsto c_1(\mathcal L_K(\lm))$. In what follows, we identify $\mathfrak X(T)$ with $\mathfrak X(K)$
via the restriction. Then both isomorphisms~(\ref{eq:pt:1}) and~(\ref{eq:pt:1.5}) become equal.
Note that the Weyl group $W$ acts on $\mathfrak X(K)$ and $\mathfrak X(T)$ by $(w\lm)(t)=\lm(\dot w^{-1}t\dot w)$.

We are free to choose a universal principal $K$-bundle $E_K\to B_K$ to compute $S=H^\bullet(B_K,\k)$.
We assume that the $K$-action on $E_K$ can be extended to a continuous $C$-action.
The quotient map $E^\infty\to E^\infty/K$ is an example of such a bundle.
The map $\rho_w:E_K\to E_K$ defined by $\rho_w(e)=\dot we$ factors through the action
of $K$ and we get the map $\rho_w/K:E_K/K\to E_K/K$.
This map induces the ring homomorphism $(\rho_w/K)^*:S\to S$.
It is easy to check that the pullback of $\mathcal L_K(\lm)$ along $\rho_w/K$ is $\mathcal L(w^{-1}\lambda)$.
Therefore, under identification~(\ref{eq:pt:1.5}), we get
\begin{equation}\label{eq:pt:2}
(\rho_w/K)^*(u)=w^{-1}u
\end{equation}
for any $u\in S$.

For a finite space $X$ with the discrete topology and trivial action of $K$,
we identify $H^\bullet_T(X,\k)$
with $S(X)$.
More exactly, let $x\in X$ be an arbitrary point. Consider the map $j_x:E_K/K\to X\times_KE_K$
given by $j_x(Ke)=K(x,e)$. Then any element $h\in H^n_K(X,\k)$ is identified with the function
$x\mapsto j_x^*h$. A similar identification is possible for a finite discrete space $X$ with the trivial action of $T$.

\subsection{Twisted actions of $S$}\label{Twisted action of S} Suppose that $E_K\to B_K$ is a universal principal $K$-bundle
such that the $K$-action on $E_K$ can be extended to a continuous $C$-action.
An example of such a bundle is the quotient map $E^\infty\to E^\infty/K$ (see, Section~\ref{Stiefel manifolds}).
Let $s:I\to\mathcal T(W)$ be a sequence.
We assume that $I$ is embedded into another totaly ordered set $J$
(this assumption will be used in Section~\ref{Embeddings_of_the_Borel_constructions}).
Let $j$ be an element of $J\cup\{-\infty\}$. We define the map $\Sigma(s,j,w):\BS_c(s)\times_KE_K\to E_K/K$ by
\begin{equation}\label{eq:bt9:2}
K([c],e)\mapsto K(c^j\dot w)^{-1}e.
\end{equation}
The reader can easily check that this map is well-defined and continuous.
We call the map $\Sigma(s,-\infty,1)$ the {\it canonical} projection from
$\BS_c(s)\times_KE_K$ to $E_K/K$.

Taking cohomologies, we get the map
$$
\Sigma(s,j,w)^*:S\to H_K^\bullet(\BS_c(s),\k).
$$
This map induces the action of $S$ on $H^\bullet(\BS_c(s),\k)$ by
$u\cdot h=\Sigma(s,j,w)^*(u)\cup h$. For $w=1$ and $j=-\infty$, we get the canonical
action of $S$.
%
Note that these actions are independent of the choice of the universal principal $K$-bundle
and of the action of $C$.

We also consider the finite dimensional version of these maps. Let $\Sigma^N(s,j,w):\BS_c(s)\times_KE^N_K\to E^N_K/K$
be the map given by~(\ref{eq:bt9:2}). Here $N$ may be an integer greater than or equal to $\r$ or $\infty$.
Note that $\Sigma^\infty(s,j,w)$ is a representative of $\Sigma(s,j,w)$.

\section{Tensor products}\label{Tensor products} From now on, we assume that $\k$ is a commutative
ring having a finite global dimension.
We denote by $D_\k$ the derived category of the category of $\k$-modules and by $D_\k(X)$
the derived category of the category of sheaves of $\k$-modules on a topological space $X$.
We consider the bounded-below, bounded-above and bounded subcategories
$D_\k^+, D_\k^-$, $D_\k^b$ of $D_\k$, respectively. Moreover, we set $D_\k^\emptyset=D_\k$.
The similar notation is used for $D_\k(X)$.

\subsection{Tensor products of complexes of modules}

The tensor product $N\otimes_\k M$
of two left $\k$-modules is again a left $\k$-module. Similarly
the tensor product $C^\bullet\otimes_\k D^\bullet$ of two
complexes of left $\k$-modules is again a complex of left
$\k$-modules. If the ring $\k$ is obvious from the context, then
we write $\otimes$ instead of $\otimes_\k$. For any integer $n$,
there is the natural homomorphism
\begin{equation}\label{eq:coh:1}
\bigoplus_{i+j=n}H^i(C^\bullet)\otimes H^j(D^\bullet)\to
H^n(C^\bullet\otimes D^\bullet).
\end{equation}
We would like to know if this homomorphism is an isomorphism for
$C^\bullet$ and $D^\bullet$ satisfying certain conditions.

\begin{lemma}\label{lemma:coh:1} Let $D^\bullet$ be a complex of projective $\k$-modules bounded above such that $H^j(D^\bullet)$
is projective for any $j$. Then~(\ref{eq:coh:1}) is an
isomorphism.
\end{lemma}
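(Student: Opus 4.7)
The plan is to establish that $D^\bullet$ is chain homotopy equivalent to $H^\bullet(D^\bullet)$ viewed as a complex with zero differentials, and then to deduce~(\ref{eq:coh:1}) from the fact that $C^\bullet\otimes(-)$ preserves homotopy equivalences and that projective modules are flat.

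First I would set $Z^j=\ker d^j$ and $B^j=\im d^{j-1}$, giving the standard short exact sequences
$0\to B^j\to Z^j\to H^j(D^\bullet)\to 0$ and $0\to Z^j\to D^j\to B^{j+1}\to 0$.
Since $H^j(D^\bullet)$ is projective by hypothesis, the first sequence splits. Using that $D^\bullet$ is bounded above, I would argue by downward induction on $j$ that each $B^{j+1}$ is projective: for $j$ sufficiently large we have $B^{j+1}=0$; assuming $B^{j+1}$ projective, the second sequence splits, so $Z^j$ is a direct summand of the projective module $D^j$ and hence projective, and then the split of the first sequence realizes $B^j$ as a direct summand of $Z^j$, hence projective. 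This is the only place where boundedness is really used, and it is the main technical obstacle.

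Combining the two splittings, I obtain a (non-canonical) decomposition $D^j\cong B^j\oplus H^j(D^\bullet)\oplus Q^j$ such that $d^j$ vanishes on $B^j\oplus H^j(D^\bullet)$ and maps $Q^j$ isomorphically onto the copy of $B^{j+1}$ sitting inside $D^{j+1}$. This exhibits $D^\bullet$ as the direct sum of $H^\bullet(D^\bullet)$ (with zero differential) and a complex that decomposes further into two-term acyclic complexes $Q^j\xrightarrow{\sim}B^{j+1}$ between projective modules; each such two-term complex is clearly null-homotopic, so $D^\bullet$ is chain homotopy equivalent to $H^\bullet(D^\bullet)$.

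Finally, since tensoring by $C^\bullet$ preserves chain homotopy, we obtain a homotopy equivalence $C^\bullet\otimes D^\bullet\simeq \bigoplus_j (C^\bullet\otimes H^j(D^\bullet))[-j]$, and because each $H^j(D^\bullet)$ is projective, hence flat, tensoring by $H^j(D^\bullet)$ commutes with taking cohomology: $H^i(C^\bullet\otimes H^j(D^\bullet))\cong H^i(C^\bullet)\otimes H^j(D^\bullet)$. Summing over $i+j=n$ yields the asserted isomorphism. That the isomorphism so constructed coincides with the natural map~(\ref{eq:coh:1}) is automatic from the construction, since every step respects the canonical maps induced by passing from cycles to cohomology.
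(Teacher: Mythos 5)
Your proof is correct and uses the same essential mechanism as the paper: both split $D^\bullet$ (using projectivity of cohomology, and the boundedness-above hypothesis to get projectivity of cycles and boundaries by downward induction) into the direct sum of its cohomology with zero differentials and null-homotopic two-term pieces $B^{j+1}\xrightarrow{\sim}B^{j+1}$, then tensor with $C^\bullet$ and use flatness of the projective cohomology. The paper compresses the argument by citing Hatcher's Theorem~3B.5 and only spelling out the decomposition $D^j\cong B^j\oplus H^j(D^\bullet)\oplus B^{j+1}$; you fill in the downward induction on projectivity of $B^j$ (the step where boundedness above is genuinely used) and phrase the conclusion via chain-homotopy invariance of $C^\bullet\otimes-$ rather than direct distributivity, but these are the same computation.
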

\begin{proof} The proof is the same as that of~\cite[Theorem 3B.5]{Hatcher_AT} with the exception
of the case where the differentials of $C^\bullet$ are all zero.
In that case, the complex $C^\bullet\otimes D^\bullet$ is a direct
sum of complexes $C^i[-i]\otimes D^\bullet$. So it suffices to
consider the case where $C^\bullet$ is concentrated in degree
zero. We note that the complex $D^\bullet$ is isomorphic to the
following one:
$$
\cdots\to B^i\oplus H^i(D^\bullet)\oplus
B^{i+1}\stackrel{\partial^i}\to B^{i+1}\oplus
H^{i+1}(D^\bullet)\oplus B^{i+2}\to\cdots,
$$
where $B^i$ is the $i$th coboundary and $\partial^i$ maps the
first to summands to zero and the third one identically. As the
tensor product is distributive over the direct sum, the required
result follows.
\end{proof}

For every complex $X\in\Ob(D_\k^*)$ where $*=-,+,b,\emptyset$, there exist
a complex of projective modules $P\in\Ob(D_\k^*)$ and
a quasi-isomorphism $P\to X$. This allows us to define the derived
tensor product:
$$
\Lotimes:D_\k^*\times D_\k^*\to D_\k^*.
$$
Thus we get the map
\begin{equation}\label{eq:coh:2}
\bigoplus_{i+j=n}H^i(C^\bullet)\otimes H^j(D^\bullet)\to
H^n(C^\bullet\Lotimes D^\bullet).
\end{equation}
for any complexes $C^\bullet,D^\bullet\in\Ob(D_\k^*)$.
We will need the following result.

\begin{lemma}\label{lemma:coh:2}
Let $C^\bullet,D^\bullet\in\Ob(D_\k^*)$ be such that
$H^i(C^\bullet)=0$ for $i<0$ and $H^j(D^\bullet)$ is projective
for $j\le N$. Then~(\ref{eq:coh:2}) is an isomorphism for $n\le
N-\gld(\k)$.
\end{lemma}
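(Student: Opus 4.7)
The strategy is to split $D^\bullet$ along the smart-truncation distinguished triangle
$$
\tau^{\le N} D^\bullet \to D^\bullet \to \tau^{>N} D^\bullet \xrightarrow{+1}
$$
and handle the two pieces separately after applying $C^\bullet \Lotimes (-)$. The associated long exact sequence provides a natural map $H^n(C^\bullet \Lotimes \tau^{\le N} D^\bullet) \to H^n(C^\bullet \Lotimes D^\bullet)$, and I will show that this map is an isomorphism for $n \le N - \gld(\k)$ while identifying its source with the left-hand side of~(\ref{eq:coh:2}).

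For the lower truncation, observe that $\tau^{\le N} D^\bullet$ is bounded above (zero in degrees $> N$) and has projective cohomology in every degree: $H^j(D^\bullet)$ for $j \le N$ and zero otherwise. Choosing a bounded-above projective resolution $P^\bullet \to \tau^{\le N} D^\bullet$, the complex $P^\bullet$ satisfies the hypotheses of Lemma~\ref{lemma:coh:1}, and since $C^\bullet \Lotimes \tau^{\le N} D^\bullet \simeq C^\bullet \otimes P^\bullet$, that lemma yields
$$
H^n(C^\bullet \Lotimes \tau^{\le N} D^\bullet) \cong \bigoplus_{i+j=n} H^i(C^\bullet) \otimes H^j(D^\bullet) \qquad\text{for } n \le N,
$$
where the constraint $j \le N$ coming from the truncation is automatic because $H^i(C^\bullet) = 0$ for $i < 0$ forces $j = n - i \le n \le N$.

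For the upper truncation, I would invoke the hypercohomology (Künneth) spectral sequence
$$
E_2^{p,q} = \bigoplus_i \mathrm{Tor}^\k_{-p}(H^i(C^\bullet), H^{q-i}(\tau^{>N} D^\bullet)) \Longrightarrow H^{p+q}(C^\bullet \Lotimes \tau^{>N} D^\bullet).
$$
A nonzero $E_2^{p,q}$ requires $-\gld(\k) \le p \le 0$ (from $\mathrm{Tor}$-vanishing), $i \ge 0$ (from $H^i(C^\bullet) = 0$ for $i < 0$), and $q - i \ge N + 1$ (from the cohomology of the truncation); hence $p + q \ge N + 1 - \gld(\k)$. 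Consequently $H^n(C^\bullet \Lotimes \tau^{>N} D^\bullet) = 0$ whenever $n \le N - \gld(\k)$, and the long exact sequence delivers the desired isomorphism in that range. A final naturality check confirms that the composite isomorphism coincides with the edge map~(\ref{eq:coh:2}).

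The main obstacle is the bookkeeping for possibly unbounded complexes, where one must verify that both the projective resolution $P^\bullet$ and the Künneth spectral sequence are well-behaved; this is controlled by the a priori confinement of nonzero entries to the strip $-\gld(\k) \le p \le 0$ (available because $\k$ has finite global dimension), together with the one-sided cohomology-vanishing hypotheses on $C^\bullet$ and on $\tau^{>N} D^\bullet$, which reduce each $H^n$ to a finite sum.
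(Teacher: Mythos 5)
Your proposal follows the same skeleton as the paper's proof: split $D^\bullet$ with the truncation triangle $\tau_{\le N}D^\bullet\to D^\bullet\to\tau_{>N}D^\bullet\stackrel{+}{\to}$, identify $H^n(C^\bullet\Lotimes\tau_{\le N}D^\bullet)$ via Lemma~\ref{lemma:coh:1}, and show the $\tau_{>N}$ piece does not contribute in degrees $n\le N-\gld(\k)$. The one genuine difference lies in how the vanishing of $H^n(C^\bullet\Lotimes\tau_{>N}D^\bullet)$ is obtained. The paper argues concretely: after replacing $C^\bullet$ (WLOG with $C^i=0$ for $i<0$) by a projective resolution $P^\bullet$ with $P^i=0$ for $i<-\gld(\k)$, the $n$-th term of $P^\bullet\otimes\tau_{>N}D^\bullet$ is a sum over $i\ge-\gld(\k)$, $j>N$ and is therefore zero for $n\le N-\gld(\k)$, so the complex itself is zero in the relevant degrees. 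You instead invoke the hypertor (K\"unneth) spectral sequence $E_2^{p,q}=\bigoplus_i\mathrm{Tor}^\k_{-p}(H^i(C^\bullet),H^{q-i}(\tau_{>N}D^\bullet))$ and read off the same bound from the constraints $-\gld(\k)\le p\le 0$, $i\ge 0$, $q-i>N$. Both routes are correct and give the identical degree bound; the paper's term-by-term count is more elementary and sidesteps the convergence bookkeeping for the spectral sequence that you rightly flag as the residual obstacle. One small point worth tightening in your write-up: to get an isomorphism from the long exact sequence at degree $n$ you need $H^{n}$ and $H^{n-1}$ of $C^\bullet\Lotimes\tau_{>N}D^\bullet$ to vanish, which your degree bound does give for $n\le N-\gld(\k)$, but it is worth saying so explicitly; and the WLOG replacement of $C^\bullet$ by $\tau_{\ge0}C^\bullet$ (so it is genuinely bounded below) is still advisable before invoking the spectral sequence, just as the paper makes this reduction before choosing $P^\bullet$.
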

\begin{proof} Consider the following distinguished triangle:
$$
\tau_{\le N}D^\bullet\to
D^\bullet\to\tau_{>N}D^\bullet\stackrel+\to,
$$
where $\tau_{\le N}$ and $\tau_{>N}$ are the standard truncation
functors. Tensoring with $C^\bullet$, we get another distinguished
triangle:
$$
C^\bullet\Lotimes\tau_{\le N}D^\bullet\to C^\bullet\Lotimes
D^\bullet\to C^\bullet\Lotimes\tau_{>N}D^\bullet\stackrel+\to
$$
Hence we get an exact sequence
\begin{equation}\label{eq:coh:3}
H^{n-1}(C^\bullet\Lotimes\tau_{>N}D^\bullet)\to
H^n(C^\bullet\Lotimes\tau_{\le N}D^\bullet)\to
H^n(C^\bullet\Lotimes D^\bullet)\to
H^n(C^\bullet\Lotimes\tau_{>N}D^\bullet).
\end{equation}

Without loss of generality, we can assume that $C^i=0$ for $i<0$.
Then there exists a complex $P^\bullet$ of projective modules and
a quasi-isomorphism $P^\bullet\to C^\bullet$ such that $P^i=0$ for
$i<-\gld(\k)$. This follows, for example, from the
Cartan-Eilenberg resolution. We get
$$
(C^\bullet\Lotimes\tau_{>N}D^\bullet)^n=P^\bullet\otimes\tau_{>N}D^\bullet=\bigoplus_{i+j=n\atop
i\ge-\gld(\k),j>N}P^i\otimes D^j.
$$
The last sum is obviously zero if $n\le N-\gld(\k)$. In thais
case, we get from~(\ref{eq:coh:3}) the commutative diagram
$$
\begin{tikzcd}
H^n(C^\bullet\Lotimes\tau_{\le N}D^\bullet)\arrow{r}{\sim}&H^n(C^\bullet\Lotimes D^\bullet)\\
\displaystyle\bigoplus_{i+j=n}H^i(C^\bullet)\otimes H^j(\tau_{\le
N}D^\bullet)\arrow{r}\arrow{u}&\displaystyle\bigoplus_{i+j=n}H^i(C^\bullet)\otimes
H^j(D^\bullet)\arrow{u}
\end{tikzcd}
$$
The left vertical arrow is an isomorphism by
Lemma~\ref{lemma:coh:1} and the bottom horizontal arrow is an
isomorphism as the summation in both formulas can be restricted to
$j\le n\le N$. Hence the right vertical arrow is also an
isomorphism.
\end{proof}

\subsection{Projection formula and base change for proper maps}\label{Projection_formula_and_base_change_for_proper_maps}
Let $f:X\to Y$ be a proper map between locally compact topological spaces,
$\F^\bullet\in\Ob(D_\k(X)^+)$ and $\G^\bullet\in\Ob(D_\k(Y)^+)$.
Then there exists an isomorphism
$$
Rf_*\F^\bullet\Lotimes\G^\bullet\stackrel\sim\to Rf_*(\F^\bullet\Lotimes
f^*\G^\bullet),
$$
which is called the {\it projection formula}. It follows from the
proof of this formula, for example~\cite[Proposition 2.6.6]{KS},
that (in this special case where $f$ is proper) it is actually
conjugate to the natural map
$f^*Rf_*\F^\bullet\Lotimes f^*\G^\bullet\to\F^\bullet\Lotimes f^*\G^\bullet$
(see the proof of~\cite[Proposition 2.5.13]{KS} and the proof of formula (2.3.21)
from the same book).

We have a similar description for the base change morphism. Let
that $X$ and $Y$ be compact topological spaces and
$\F^\bullet\in\Ob(D_\k(X)^+)$. We consider the Cartesian product
$X\times Y$ and the following natural maps:
\begin{equation}\label{eq:coh:5}
\begin{tikzcd}
{}&X\times Y\arrow{rd}{p_Y}\arrow{ld}[swap]{p_X}\arrow{dd}{a}&\\
X\arrow{rd}[swap]{a_X}&&Y\arrow{ld}{a_Y}\\
&\pt&
\end{tikzcd}
\end{equation}
Let
\begin{equation}\label{eq:coh:4}
a_Y^*Ra_{X*}\F^\bullet\to Rp_{Y*}p_X^*\F^\bullet
\end{equation}
be the morphism conjugate to the morphism
$p_Y^*a_Y^*Ra_{X*}\F^\bullet=p_X^*a_X^*Ra_{X*}\F^\bullet\to
p_X^*\F^\bullet$ induced by the counit $a_X^*Ra_{X*}\to\id$. To
prove that~(\ref{eq:coh:4}) is an isomorphism, we can argue
locally and reduce the problem to the case $Y=\pt$. In that case,
the result follows from the zigzag identity for the unit and
counit.

\subsection{Cross product and the K\"unneth formula} Let $X$
be a topological space and $\F^\bullet,\G^\bullet\in\Ob(D_\k(X)^+)$. 
Consider the morphism
$$
\cup_X:Ra_{X*}\F^\bullet\Lotimes Ra_{X*}\G^\bullet\to
Ra_{X*}(\F^\bullet\Lotimes\G^\bullet)
$$
that is conjugate to the product of counits
$$
a_X^*(a_{X*}\F^\bullet\Lotimes
a_{X*}\G^\bullet)=a_X^*a_{X*}\F^\bullet\Lotimes
a_X^*a_{X*}\G^\bullet\to\F^\bullet\Lotimes\G^\bullet.
$$
Taking cohomologies and applying morphism~(\ref{eq:coh:2}), we get
the map $\bigoplus_{i+j=n}{\mathbb
H}^i(X,\F^\bullet)\otimes{\mathbb H}^i(X,\G^\bullet)\to{\mathbb
H}^n(X,\F^\bullet\Lotimes\G^\bullet)$, which is called the {\it
cup product}~\cite[Exercise II.17]{KS}.

Now let that $X$ and $Y$ be topological spaces and
$\F^\bullet\in\Ob(D_\k(X)^+)$ and $\G^\bullet\in\Ob(D_\k(Y)^+)$.
Then we have diagram~(\ref{eq:coh:5}).
%
If we additionally assume that $X$ and $Y$ are compact, then we
have the following diagram:
\begin{equation}\label{eq:coh:9}
\begin{tikzcd}
Ra_*p_X^*\F^\bullet\Lotimes Ra_*p_Y^*\G^\bullet\arrow{rr}{\cup_{X\times Y}}&&Ra_*(p_X^*\F^\bullet\Lotimes p_Y^*\G^\bullet)\\
&Ra_{X*}\F^\bullet\Lotimes
Ra_{Y*}\G^\bullet\arrow{ul}\arrow{ur}[swap]{\sim}&
\end{tikzcd}
\end{equation}
The left arrow comes from the units of adjunction $\id\to
p_{X*}p_X^*$ and $\id\to p_{Y*}p_Y^*$ and the right arrow is the
K\"unneth isomorphism. As we want to prove the commutativity of
the above diagram, we need to look more closely at this
isomorphism. Actually it is the following composition of
projection formulas and the base change:
$$
\begin{tikzcd}[column sep=15ex]
Ra_{X*}\F^\bullet\Lotimes Ra_{Y*}\G^\bullet\arrow{r}{\text{projection formula}}[swap]{\sim}&Ra_{Y*}(a_Y^*Ra_{X*}\F^\bullet\Lotimes\G^\bullet)\arrow{r}{\text{base change}}[swap]{\sim}&{}
\end{tikzcd}
$$
$$
\!\!\!\!\!\!\!\!\!\!\!\!\!\!\!\!\!\!\!\!\!\!\!\!\!\!\!\!\!\!\!\!\!\!\!
\begin{tikzcd}[column sep=15ex]
Ra_{Y*}(Rp_{Y*}p_X^*\F^\bullet\Lotimes\G^\bullet)\arrow{r}{\text{projection formula}}[swap]{\sim}&Ra_*(p_X^*\F^\bullet\Lotimes Rp_Y^*\G^\bullet).
\end{tikzcd}
$$
Now it remains to apply the constructions of
Section~\ref{Projection_formula_and_base_change_for_proper_maps}.

Let $X$ and $Y$ be compact topological spaces such that $H^j(X,\k)$ is
a free $\k$-module for $j\le N$. For $n\le N-\gld(\k)$, we get the following commutative diagram:
$$
\!\!\!
\begin{tikzcd}\displaystyle\bigoplus_{i+j=n}H^i(X\times Y,\k)\otimes H^j(X\times Y,\k)\arrow{rr}{\text{cup product}}&&H^n(X\times Y,\k)\\
&\displaystyle\bigoplus_{i+j=n}H^i(X,\k)\otimes H^j(Y,\k)\arrow{ul}\arrow{ur}[swap]{\sim}&
\end{tikzcd}
$$

This construction can be easily iterated as follows. Let $X_1,\ldots,X_m$ be topological spaces such
that $H^n(X_j,\k)$ are free for all $n\le N$ and $j=2,\ldots,m$. Then for any $n\le N-\gld(\k)$ we have the isomorphism
\begin{equation}\label{eq:coh:12}
\bigoplus_{i_1+\cdots+i_m=n}H^{i_1}(X_1,\k)\otimes\cdots\otimes H^{i_m}(X_m,\k)\stackrel\sim\to H^n(X_1\times\cdots\times X_m,\k)
\end{equation}
that is given by the cross product $a_1\otimes\cdots\otimes a_m\mapsto p_1^*(a_1)\cup\cdots\cup p_m^*(a_m)$,
where $p_i:X_1\times\cdots\times X_m\to X_i$ is the projection to the $i$th coordinate.

\subsection{Embeddings of the Borel constructions}\label{Embeddings_of_the_Borel_constructions}
 We return here to the notation of Section~\ref{Nested_fibre_bundles}.
For any $m=1,\ldots,\n$ and $N\ge\r$, we define the map
$$
q^{m,N}:\BS_c(s,v)\times_K E^N\to\BS_c(s_{f^m},v_{f^m})\times_K E^N
$$
by
\begin{equation}\label{eq:bt6:15}
\begin{tikzcd}
K([c],e)\arrow[mapsto]{r}{q^{m,N}}&K\(\big[c_{f^m}\big],(c^{f_1^m-1})^{-1}e\).
\end{tikzcd}
\end{equation}
It is easy to check that this map is well-defined and continuous.
Remember also from Section~\ref{Twisted action of S} that we have
the maps
$$
\Sigma^N(s^F,f^m_1,v_{f^1}\cdots v_{f^{m-1}}):\BS_c(s^F)\times_K E^N\to E^N/K.
$$
Let $\sigma^N_m:\BS_c(s^F,v^F)\times_K E^N\to E^N/K$ be the restriction of this map. More precisely, we have
$$
\hspace{-140pt}
\begin{tikzcd}
K([d],e)\arrow[mapsto]{r}{\sigma^N_m}&
K((d_{\min I}\cdots d_{f^1_1-1}d_{f^1_2+1}\cdots d_{f^2_1-1}\cdots
\end{tikzcd}
$$
$$
\hspace{200pt}\cdots d_{f^{m-1}_2+1}\cdots d_{f^m_1-1}\dot v_{f^1}\cdots\dot v_{f^{m-1}})^{-1}e.
$$
%
%
We consider the map
$$
\mu^N=(p^F\times_K\id)\boxtimes q^{1,N}\boxtimes\cdots\boxtimes q^{\n,N}
$$
from $BS_c(s,v)\times_K E^N$ to
$$
M^N=(\BS_c(s^F,v^F)\times_K E^N)\times\prod_{m=1}^\n\BS_c(s_{f^m},v_{f^m})\times_K E^N.
$$
We abbreviate $M=M^\infty$, $\mu=\mu^\infty$ and $\sigma_m=\sigma_m^\infty$.

The following result allows us to embed Borel constructions to Cartesian product of Borel constructions
in a way similar to
~\cite[Lemma~21]{NFBBSV}.

\begin{lemma}\label{lemma:10}
For $N<\infty$, the map $\mu^N$ is a topological embedding.
Its image consists of all $\n+1$-tuples $\big(K([d],e),K([c^{(1)}],e_1),\ldots,K([c^{(\n)}],e_\n)\big)$ such that
\begin{equation}\label{eq:13}
Ke_m=\sigma^N_m(K([d],e))
\end{equation}
for any $m=1,\ldots,\n$.
\end{lemma}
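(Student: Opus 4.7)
The plan is to show that $\mu^N$ is a continuous bijection onto the locus described by~(\ref{eq:13}) and then to upgrade this to a topological embedding by a compactness argument specific to finite $N$.

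For the forward containment of the image in that locus, I would pick any $F$-balanced representative $c\in C(s,v)$ of a given class in $\BS_c(s,v)$ (such representatives exist by the remark immediately after the definition of $F$-balanced sequences) and set $d=c^F$. Writing $v^i=v_{f^1}\cdots v_{f^{m-1}}$ on the interval $(f^{m-1}_2,f^m_1)\cap I^F$ where $d_i=\dot v^i c_i(\dot v^i)^{-1}$, I would perform an induction on $m$: at each step the $F$-balance identity $c_{f^{m-1}_1}\cdots c_{f^{m-1}_2}=\dot v_{f^{m-1}}$ allows the telescoping
\[
d_{\min I}\cdots d_{f^1_1-1}\,d_{f^1_2+1}\cdots d_{f^m_1-1}=c^{f^m_1-1}(\dot v_{f^1}\cdots\dot v_{f^{m-1}})^{-1}.
\]
Substituting this into the definition of $\sigma^N_m$ gives $\sigma^N_m(K([d],e))=K(c^{f^m_1-1})^{-1}e=Ke_m$, which is exactly~(\ref{eq:13}).

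For the reverse containment together with injectivity, given a tuple in $M^N$ satisfying~(\ref{eq:13}), I would first choose any representative $d\in C(s^F,v^F)$ of $[d]$ and then use the independent $K$-actions on each factor $\BS_c(s_{f^m},v_{f^m})\times_K E^N$ to renormalize so that $e_m=(d_{\min I}\cdots d_{f^m_1-1}\dot v_{f^1}\cdots\dot v_{f^{m-1}})^{-1}e$; hypothesis~(\ref{eq:13}) is precisely what permits this rechoice. Next, I would assemble $c\in C(s,v)$ by setting $c_i=(\dot v^i)^{-1}d_i\dot v^i$ on $i\in I^F$ and inserting on each interval $[f^m]$ a representative of the class $c^{(m)}$ chosen so that the product $c_{f^m_1}\cdots c_{f^m_2}$ equals $\dot v_{f^m}$; existence of such representatives is the single-interval version of the $F$-balanced lifting remark. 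A telescoping verification analogous to the first step then confirms that $c\in C(s,v)$, that $p^F([c])=[d]$, and that $\mu^N(K([c],e))$ reproduces the original tuple. Injectivity is essentially the same argument in reverse: two preimages of the same tuple must, after normalization, yield the same reconstructed $c$ and the same $e$ up to an overall $K$-shift already absorbed by the Borel quotient.

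Finally, to upgrade the continuous bijection onto the image to a topological embedding, I would use compactness: for $N<\infty$ the Stiefel manifold $E^N$ is compact, $\BS_c(s,v)$ is closed in the compact Hausdorff $\BS_c(s)$, hence $\BS_c(s,v)\times_K E^N$ is compact, while $M^N$ is Hausdorff as a finite product of Borel constructions on Hausdorff spaces. A continuous injection from a compact space into a Hausdorff one is automatically a closed topological embedding, which completes the argument. The hardest step I anticipate is the second one: making the $K$-choices at each boundary $f^m_1$ consistent with both the global data of $c\in C(s,v)$ and the local data of $c^{(m)}\in\BS_c(s_{f^m},v_{f^m})$, while also verifying the remaining nested constraints indexed by $R^F$ by telescoping across the gaps.
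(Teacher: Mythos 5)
Your three-step plan — image description forward, image description backward with reconstruction, then upgrade to an embedding by compactness — reorganizes the paper's argument but uses the same ingredients: $F$-balanced representatives, the telescoping identity (which is exactly the paper's equation~(\ref{eq:bt6:8})), and the fact that a continuous injection from a compact space into a Hausdorff space is a closed embedding. The forward containment is correct as you sketch it, and the construction of a preimage from a tuple satisfying~(\ref{eq:13}) matches the paper's use of the $F$-balanced lifting and the renormalization of the fibre factors.

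The serious gap is injectivity, which you dispose of in one sentence: \emph{``Injectivity is essentially the same argument in reverse: two preimages of the same tuple must, after normalization, yield the same reconstructed $c$ and the same $e$ up to an overall $K$-shift already absorbed by the Borel quotient.''} This does not follow from the reconstruction, because the reconstruction is not obviously single-valued. Suppose $K([c],e)$ and $K([\tilde c],\tilde h)$ map to the same tuple, with $c,\tilde c$ both $F$-balanced. Comparing coordinates gives $\n+1$ separate $K$-elements $\tilde k_0,\tilde k_1,\dots,\tilde k_\n$ together with independent $K(I^F)$- and $K([f^m])$-factors $k^{(0)},k^{(1)},\dots,k^{(\n)}$ relating the two. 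To conclude $K([c],e)=K([\tilde c],\tilde h)$ you must show that these $\n+1$ different $K$-shifts are not independent: each $\tilde k_m$ is forced to be the conjugate $(\dot v_{f^1}\cdots\dot v_{f^{m-1}})^{-1}k^{(0)}_{f^m_1-1}\dot v_{f^1}\cdots\dot v_{f^{m-1}}$, and once that is established you must stitch the local $K$-factors into a single $k'\in K(I)$ with $d=\tilde k_0^{-1}ck'$, checking compatibility at every boundary $f^m_1$ and $f^m_2+1$. This is the bulk of the paper's proof — a non-trivial induction on $m$ culminating in equation~(\ref{eq:11}), followed by a case analysis at the interval boundaries. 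Your own closing remark that ``the hardest step I anticipate is\ldots making the $K$-choices at each boundary $f^m_1$ consistent'' is exactly the point; that consistency is precisely what a proof of injectivity has to establish, and it cannot be waved away as ``the same argument in reverse.'' As it stands, the proposal proves that the image is contained in the locus and that every point of the locus has at least one preimage, but it does not prove uniqueness of the preimage, so the embedding claim is not justified.

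A small notational slip, unrelated to the gap: in the normalization step you write $e_m=(d_{\min I}\cdots d_{f^m_1-1}\dot v_{f^1}\cdots\dot v_{f^{m-1}})^{-1}e$, but $d$ is a sequence on $I^F$, so the product should skip the intervals $[f^1],\dots,[f^{m-1}]$, i.e.\ it should be $d_{\min I}\cdots d_{f^1_1-1}\,d_{f^1_2+1}\cdots d_{f^m_1-1}$, as in the paper's definition of $\sigma^N_m$.
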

\begin{proof} {\it Part 1: $\mu^N$ is a topological embedding.}
As $\mu^N$ maps a compact space to a Hausdorff space, it suffices to prove that it is injective.
Let $K([c],e)$ and $K([d],h)$ be two orbits of $\BS_c(s,v)\times_K E^N$ mapped to the same $\n+1$-tuple by $\mu^N$.
We can assume that $c$ and $d$ are $F$-balanced.
It follows that there exist some elements $\tilde k_0,\tilde k_1,\ldots,\tilde k_\n\in K$ such that
$$
([d^F],h)=\tilde k_0^{-1}([c^F],e),\quad
\(\big[d_{f^m}\big],(d^{f_1^m-1})^{-1}h\)=\tilde k_m^{-1}\(\big[c_{f^m}\big],(c^{f_1^m-1})^{-1}e\)
$$
for $m=1,\ldots,\n$. Hence in its turn, it follows that
%
\begin{equation}\label{eq:8}
h=\tilde k_0^{-1}e,\quad
(d^{f_1^m-1})^{-1}h
=
\tilde k_m^{-1}
(c^{f_1^m-1})^{-1}e
\end{equation}
and that there exist some sequences
$k^{(0)}\in K(I^F)$, $k^{(1)}\in K([f^1])$ , \ldots, $k^{(\n)}\in K([f^{\n}])$
such that
\begin{equation}\label{eq:9}
d^F=\tilde k_0^{-1}c^Fk^{(0)},\quad d_{f^m}=\tilde k_m^{-1}c_{f^m}k^{(m)}
\end{equation}
for any $m=1,\ldots,\n$.

The fact that $c$ and $d$ are $F$-balanced and the second equality of~(\ref{eq:9}) immediately prove that
\begin{equation}\label{eq:10}
\dot v_{f^m}=\tilde k_m^{-1}
\dot v_{f^m}
k^{(m)}_{f^m_2}
\end{equation}
for any $m=1,\ldots,\n$. For our calculations below, it is convenient to define first $k^{(0)}_{-\infty}=\tilde k_0$ and
then for any $m=1,\ldots,\n$ to define $k^{(0)}_{f^m_1-1}=k^{(0)}_j$,
where
$j$ is the maximal element of $ I^F\cup\{-\infty\}$ less than or equal to $f^m_1-1$.

We will prove by induction on $m=1,\ldots,\n$ that
\begin{equation}\label{eq:11}
\tilde k_m=(\dot v_{f^1}\cdots\dot v_{f^{m-1}})^{-1}
k^{(0)}_{f^m_1-1}\dot v_{f^1}\cdots\dot v_{f^{m-1}}.
\end{equation}
First consider the case $m=1$ and $f^1_1=\min I$. The second equality of~(\ref{eq:8}) now takes the form $h=\tilde k_1^{-1}e$.
Comparing it with the first equality of~(\ref{eq:8}), we get
\begin{equation}\label{eq:12}
\tilde k_1=\tilde k_0=k^{(0)}_{-\infty}=k^{(0)}_{f^1_1-1}.
\end{equation}
Now let $m=1$ and $f^m_1>\min I$. From the first equality of~(\ref{eq:9}), we get
$$
d^{f_1^1-1}=\tilde k_0^{-1}c^{f_1^1-1}k^{(0)}_{f^1_1-1}.
$$
Substituting this to the second equality of~(\ref{eq:8}) for $m=1$, we get
$$
(k^{(0)}_{f^1_1-1})^{-1}(c^{f_1^1-1})^{-1}\tilde k_0h
=
\tilde k_1^{-1}
(c^{f_1^1-1})^{-1}e
$$
Hence and from the first equality of~(\ref{eq:8}), we get $k^{(0)}_{f^1_1-1}=\tilde k_1$.

Now suppose that $m>1$ and for smaller indices~(\ref{eq:11}) is true. Applying the second equation of~(\ref{eq:9}), we get
\begin{equation}\label{eq:bt6:3}
\begin{array}{l}
\!\!\!\!\!
d^{f_1^m-1}=d^{f_1^1-1}
\Big(\tilde k_1^{-1}c_{f^1_1}\cdots c_{f^1_2}k^{(1)}_{f^1_2}\Big)\times\\
\quad\quad\times d_{f^1_2+1}\cdots d_{f_1^2-1}\Big(\tilde k_2^{-1}c_{f^2_1}\cdots c_{f^2_2}k^{(2)}_{f^2_2}\Big)
\cdots
d_{f^{m-2}_2+1}\cdots d_{f_1^{m-1}-1}\times\\
\quad\quad\quad\quad\quad\quad\quad\quad\quad\quad\quad\quad
\times\Big(\tilde k_{m-1}^{-1}c_{f^{m-1}_1}\cdots c_{f^{m-1}_2}k^{(m-1)}_{f^{m-1}_2}\Big)
d_{f^{m-1}_2+1}\cdots d_{f_1^m-1}.
\end{array}
\end{equation}
Here we replaced elements of $d$ with the corresponding elements of $c$ in the brackets.
To replace the remaining elements of $d$ with elements of $c$, we apply the first equation of~(\ref{eq:9}):
\begin{equation}\label{eq:bt6:7}
d^{f_1^1-1}=(d^F)^{f_1^1-1}=\tilde k_0^{-1}(c^F)^{f_1^1-1}k^{(0)}_{f_1^1-1}
=\tilde k_0^{-1}c^{f_1^1-1}\tilde k_1
\end{equation}
and
$$
d_{f^h_2+1}^F\cdots d_{f_1^{h+1}-1}^F=(k^{(0)}_{f^h_1-1})^{-1}c^F_{f^h_2+1}\cdots c^F_{f_1^{h+1}-1}k^{(0)}_{f^{h+1}_1-1}
$$
for any $h=1,\ldots,m-1$. Conjugating both sides of the last equality by $(\dot v_{f^1}\cdots\dot v_{f^h})^{-1}$, we get
\begin{equation}\label{eq:bt6:4}
\begin{array}{l}
\!\!\!\!\!
d_{f^h_2+1}\cdots d_{f_1^{h+1}-1}
=(\dot v_{f^1}\cdots\dot v_{f^h})^{-1}(k^{(0)}_{f^h_1-1})^{-1}\dot v_{f^1}\cdots\dot v_{f^h}\times\\[6pt]
\quad\quad\quad\quad\quad\quad\quad\quad\quad\times c_{f^h_2+1}\cdots c_{f_1^{h+1}-1}(\dot v_{f^1}\cdots\dot v_{f^h})^{-1}k^{(0)}_{f^{h+1}_1-1}\dot v_{f^1}\cdots\dot v_{f^h}.
\end{array}
\end{equation}
We claim that
\begin{equation}\label{eq:bt6:5}
(\dot v_{f^1}\cdots\dot v_{f^h})^{-1}(k^{(0)}_{f^h_1-1})^{-1}\dot v_{f^1}\cdots\dot v_{f^h}=k^{(h)}_{f_2^h}
\end{equation}
for $1\le h\le m-1$ and that
\begin{equation}\label{eq:bt6:6}
(\dot v_{f^1}\cdots\dot v_{f^h})^{-1}k^{(0)}_{f^{h+1}_1-1}\dot v_{f^1}\cdots\dot v_{f^h}=\tilde k_{h+1}.
\end{equation}
$1\le h<m-1$. The second equality follows from the inductive hypothesis.
Conjugating the first equality by $\dot v_{f^h}$, we get an equivalent equality
$$
(\dot v_{f^1}\cdots\dot v_{f^{h-1}})^{-1}(k^{(0)}_{f^h_1-1})^{-1}\dot v_{f^1}\cdots\dot v_{f^{h-1}}=\dot v_{f^h}k^{(h)}_{f_2^h}(\dot v_{f^h})^{-1},
$$
which is equivalent to
$$
\tilde k_h=\dot v_{f^h}k^{(h)}_{f_2^h}(\dot v_{f^h})^{-1}
$$
by the inductive hypothesis. This equality follows from~(\ref{eq:10}).
Substituting~(\ref{eq:bt6:5}) and~(\ref{eq:bt6:6}) to~(\ref{eq:bt6:4}), we get
$$
d_{f^h_2+1}\cdots d_{f_1^{h+1}-1}=(k^{(h)}_{f_2^h})^{-1}c_{f^h_2+1}\cdots c_{f_1^{h+1}-1}\tilde k_{h+1}
$$
for $1\le h<m-1$ and
$$
d_{f^{m-1}_2+1}\cdots d_{f_1^m-1}=(k^{(m-1)}_{f_2^{m-1}})^{-1}c_{f^{m-1}_2+1}\cdots c_{f_1^m-1}(\dot v_{f^1}\cdots\dot v_{f^{m-1}})^{-1}k^{(0)}_{f^m_1-1}\dot v_{f^1}\cdots\dot v_{f^{m-1}}
$$
Substituting these two equations and~(\ref{eq:bt6:7}) to~(\ref{eq:bt6:3}), we get
$$
d^{f_1^m-1}=\tilde k_0^{-1}c^{f_1^m-1}(\dot v_{f^1}\cdots\dot v_{f^{m-1}})^{-1}k^{(0)}_{f^m_1-1}\dot v_{f^1}\cdots\dot v_{f^{m-1}}
$$
Multiplying $h$ by the inverse of both sides, we get by the first equation of~(\ref{eq:8}) that
\begin{multline*}
(d^{f_1^m-1})^{-1}h=(\dot v_{f^1}\cdots\dot v_{f^{m-1}})^{-1}(k^{(0)}_{f^m_1-1})^{-1}\dot v_{f^1}\cdots\dot v_{f^{m-1}}(c^{f_1^m-1})^{-1}\tilde k_0h\\
=(\dot v_{f^1}\cdots\dot v_{f^{m-1}})^{-1}(k^{(0)}_{f^m_1-1})^{-1}\dot v_{f^1}\cdots\dot v_{f^{m-1}}(c^{f_1^m-1})^{-1}e.
\end{multline*}
Comparing it with the second equation of~(\ref{eq:8}), we get~(\ref{eq:11}).

Now let us define the sequence $k'\in K(I)$ by
$$
k'_i=
\left\{
\begin{array}{ll}
(\dot v_{f^1}\cdots\dot v_{f^{m-1}})^{-1}k^{(0)}_i\dot v_{f^1}\cdots\dot v_{f^{m-1}}&\text{ if }f^{m-1}_2<i<f^m_1;\\[6pt]
k^{(m)}_i&\text{ if }i\in[f^m].
\end{array}
\right.
$$
It is convenient to define $k'_{-\infty}=\tilde k_0$. 

We claim that $d=\tilde k_0^{-1}ck'$. In other words, we have to prove that 
$d_i=(k'_{i-1})^{-1}c_ik'_i$ for any $i\in I$. This formula is true if both $i-1$ and $i$ belong either to $I^F$ or
to some $[f^m]$, as it follows from~(\ref{eq:9}).

First, we consider the case where $i\in I^F$ but $i-1\notin I^F$. If $i>\min I$, then $i=f^m_2+1$ for some $m$.
From the first formula of~(\ref{eq:9}), we get
$$
d^F_i=(k^{(0)}_{f^m_1-1})^{-1}c^F_ik^{(0)}_i.
$$
Conjugating this equality by $(\dot v_{f^1}\cdots \dot v_{f^m})^{-1}$ and applying~(\ref{eq:11}) and~(\ref{eq:10}), we get
\begin{multline*}
d_i=(\dot v_{f^1}\cdots \dot v_{f^m})^{-1}(k^{(0)}_{f^m_1-1})^{-1}v_{f^1}\cdots \dot v_{f^m}
c_i(\dot v_{f^1}\cdots \dot v_{f^m})^{-1}k^{(0)}_i\dot v_{f^1}\cdots \dot v_{f^m}\\
=
\dot v_{f^m}^{-1}\tilde k_m^{-1}\dot v_{f^m}c_ik'_i=(k^{(m)}_{f^m_2})^{-1}c_ik'_i=(k'_{i-1})^{-1}c_ik'_i.
\end{multline*}
Now suppose that $i=\min I$.
From the first formula of~(\ref{eq:9}), we get
$$
d_i=d^F_i=\tilde k_0^{-1}c^F_ik^{(0)}_i=\tilde k_0^{-1}c_ik^{(0)}_i=(k'_{-\infty})^{-1}c_ik'_i=(k'_{i-1})^{-1}c_ik'_i.
$$

Now consider the case $i\in[f^m]$ but $i-1\notin[f^m]$. Then we have $i=f^m_1$.
First suppose that $i-1\in I^F$. Then $f^{m-1}_2<i-1<f^m_1$. From the second equality of~(\ref{eq:9})
and by~(\ref{eq:11}), we get
$$
d_i=\tilde k_m^{-1}c_ik^{(m)}_i=(\dot v_{f^1}\cdots\dot v_{f^{m-1}})^{-1}
(k^{(0)}_{f^m_1-1})^{-1}\dot v_{f^1}\cdots\dot v_{f^{m-1}}c_ik^{(m)}_j
=(k'_{i-1})^{-1}c_ik'_i.
$$
If $i=\min I$, then $m=1$ and $f^1_1=\min I$. From the second equality of~(\ref{eq:9}) and~(\ref{eq:12}), we get
$$
d_i=\tilde k_1^{-1}c_ik^{(1)}_i=\tilde k_0^{-1}c_ik'_i=(k'_{i-1})^{-1}c_ik'_i.
$$
Finally suppose that $m>1$ and $i-1=f^{m-1}_2$. Then we have $k^{(0)}_{f^m_1-1}=k^{(0)}_{f^{m-1}_1-1}$.
From the second equality of~(\ref{eq:9}), by~(\ref{eq:11}) applied twice and by~(\ref{eq:10}),
we get
\begin{multline*}
d_i=\tilde k_m^{-1}c_ik^{(m)}_i=(\dot v_{f^1}\cdots\dot v_{f^{m-1}})^{-1}
(k^{(0)}_{f^m_1-1})^{-1}\dot v_{f^1}\cdots\dot v_{f^{m-1}}c_ik^{(m)}_i\\
=(\dot v_{f^1}\cdots\dot v_{f^{m-1}})^{-1}
(k^{(0)}_{f^{m-1}_1-1})^{-1}\dot v_{f^1}\cdots\dot v_{f^{m-1}}c_ik^{(m)}_i\\
=(\dot v_{f^{m-1}})^{-1}
(\tilde k_{m-1})^{-1}
\dot v_{f^{m-1}}c_ik^{(m)}_i=
(k^{(m-1)}_{f^{m-1}_2})^{-1}c_ik^{(m)}_i=(k'_{i-1})^{-1}c_ik'_i.
\end{multline*}
We have therefore proved that $[d]=\tilde k_0^{-1}[c]$. Hence and from the first equation of~(\ref{eq:8}), we get
$$
K([d],h)=K(\tilde k_0^{-1}[c],\tilde k_0^{-1}e)=K([c],e).
$$

{\it Part 2: the description of the image.} All elements of the image satisfy~(\ref{eq:13}), as
for any $F$-balanced $c\in C(s,v)$, we have
\begin{equation}\label{eq:bt6:8}
\begin{array}{l}\!\!\!\!
(c^{f_1^m-1})^{-1}e\\[6pt]
\quad\quad=(c_{\min I}\cdots c_{f^1_1-1}\dot v_{f^1}c_{f^1_2+1}\cdots c_{f^2_1-1}\dot v_{f^2}\cdots\dot v_{f^{m-1}}c_{f^{m-1}_2+1}\cdots c_{f^m_1-1})^{-1}e\\[6pt]
\quad\quad\quad=(c^F_{\min I}\cdots c^F_{f^1_1-1}c^F_{f^1_2+1}\cdots c^F_{f^2_1-1}\cdots c^F_{f^{m-1}_2+1}\cdots c^F_{f^m_1-1}\dot v_{f^1}\cdots\dot v_{f^{m-1}})^{-1}e.
\end{array}
\end{equation}
Now suppose on the contrary that~(\ref{eq:13}) are satisfied.
By~\cite[Lemma~6(1)]{NFBBSV},
there exists an $F$-balanced $u\in\BS_c(s,v)$ such that $p^F([u])=[d]$.
As $p^F([u])=[u^F]$, we can without loss of generality assume that $d=u^F$ (see the remark before this proof).
Let us write~(\ref{eq:13}) in the form
$$
k_me_m=(u^F_{\min I}\cdots u^F_{f^1_1-1}u^F_{f^1_2+1}\cdots u^F_{f^2_1-1}\cdots u^F_{f^{m-1}_2+1}\cdots u^F_{f^m_1-1}\dot v_{f^1}\cdots\dot v_{f^{m-1}})^{-1}e
$$
for some $k_m\in K$. By~\cite[Lemma~5]{NFBBSV}, we can write $k_m[c^{(m)}]=[z^{(m)}]$ for some $z^{(m)}\in\widetilde C(s_{f^m},v_{f^m})$.
We define
$$
c_i=
\left\{
\begin{array}{ll}
u_i&\text{ if }i\in I^F\\
z^{(m)}_i&\text{ if }i\in[f^m].
\end{array}
\right.
$$
It is easy to see that $c$ is $F$-balanced and $c^F=u^F$.
We get
$$
(p^F\times_K\id)(K([c],e))=K([c^F],e)=K([u^F],e)=K([d],e).
$$
On the other hand, for any $m=1,\ldots,\n$, applying~(\ref{eq:bt6:8}), we get
\begin{multline*}
q^{m,N}(K([c],e))=K\(\big[z^{(m)}\big],(c^{f_1^m-1})^{-1}e\)\\
=K\(\big[z^{(m)}\big],( c^F_{\min I}\cdots c^F_{f^1_1-1} c^F_{f^1_2+1}\cdots c^F_{f^2_1-1}\cdots c^F_{f^{m-1}_2+1}\cdots c^F_{f^m_1-1}\dot v_{f^1}\cdots\dot v_{f^{m-1}})^{-1}e\)\\
=K\(\big[z^{(m)}\big],( u^F_{\min I}\cdots u^F_{f^1_1-1} u^F_{f^1_2+1}\cdots u^F_{f^2_1-1}\cdots u^F_{f^{m-1}_2+1}\cdots u^F_{f^m_1-1}\dot v_{f^1}\cdots\dot v_{f^{m-1}})^{-1}e\)\\
=K(k_m[c^{(m)}],k_me_m)=K([c^{(m)}],e_m).
\end{multline*}
Hence we, get
$$
\mu^N(K([c],e))=\big(K([d],e),K([c^{(1)}],e_1),\ldots,K([c^{(\n)}],e_\n)\big).
$$
\end{proof}

\subsection{The difference}\label{Difference_MNimmuN} We now would like to study
the cohomology with compact support of the space $M^N\setminus\im\mu^N$ for $N<\infty$ with the help of spectral sequences.
To do it, we need the corresponding fibre bundles. They are given by the following lemma.

\begin{lemma}\label{lemma:11}
Let $\r\le N<\infty$ and $\kappa:M^N\setminus\im\mu^N\to\BS_c(s^F,v^F)\times_KE^N$ be the projection to the first component
and $\eta:\BS_c(s^F,v^F)\times_KE^N\to E^N/K$ be the canonical projection. 
\begin{enumerate}
\item\label{lemma:11:p:1} $\kappa$ is a fibre bundle.\\[-10pt]
\item\label{lemma:11:p:2} the composition $\eta\kappa$ is a fibre bundle.
\end{enumerate}
\end{lemma}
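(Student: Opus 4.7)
The plan is to exploit the right action of $U(N)$ on $E^N$, which commutes with the left $K$-action and hence descends to a right action on $E^N/K$; since $U(N)$ acts transitively on $E^N$, this descended action is transitive as well. It induces a right action of $U(N)$ on every Borel construction $Z\times_K E^N$ by $K([z],e)\cdot g=K([z],eg)$, in particular on $X:=\BS_c(s^F,v^F)\times_K E^N$ and on each $Y_m:=\BS_c(s_{f^m},v_{f^m})\times_K E^N$. Because $\sigma_m^N$ is of the form $K([d],e)\mapsto Kx^{-1}e$ with $x$ depending only on $[d]$, both $\sigma_m^N$ and the projection $\pi_m:Y_m\to E^N/K$, $K([c^{(m)}],e_m)\mapsto Ke_m$, are $U(N)$-equivariant. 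Consequently the conditions $Ke_m=\sigma_m^N(K([d],e))$ cutting out $\im\mu^N$ are preserved by the diagonal $U(N)$-action on $M^N$, and so is the complement $M^N\setminus\im\mu^N$.

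For part (\ref{lemma:11:p:1}), I would first note that $\im\mu^N$ is the fibre product $X\times_{(E^N/K)^\n}(Y_1\times\cdots\times Y_\n)$ formed via $\sigma^N=(\sigma_1^N,\ldots,\sigma_\n^N):X\to(E^N/K)^\n$ and $\pi_1\times\cdots\times\pi_\n$. To trivialize $\kappa$ near $x_0\in X$, set $y^0=\sigma^N(x_0)$. The group $U(N)^\n$ acts transitively factorwise on $(E^N/K)^\n$ with closed stabilizer at $y^0$, so the principal bundle $U(N)^\n\to(E^N/K)^\n$ admits a continuous local section $\tau:V\to U(N)^\n$ on a neighborhood $V$ of $y^0$ with $y^0\cdot\tau(y)=y$. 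Over $U=(\sigma^N)^{-1}(V)$, set $F_0:=(Y_1\times\cdots\times Y_\n)\setminus\prod_{m=1}^\n\pi_m^{-1}(y^0_m)$; then the map $\Phi(x,\xi)=(x,\xi\cdot\tau(\sigma^N(x)))$ is, by the identity $y^0_m\tau_m(y)=y_m$, a homeomorphism $U\times F_0\to\kappa^{-1}(U)$, with inverse $(x,\eta)\mapsto(x,\eta\cdot\tau(\sigma^N(x))^{-1})$. This exhibits $\kappa$ as a fibre bundle with fibre $F_0$.

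For part (\ref{lemma:11:p:2}), I would observe that $\eta\kappa$ sends $(K([d],e),(K([c^{(m)}],e_m))_m)$ to $Ke$ and is therefore $U(N)$-equivariant under the diagonal right action. Since $U(N)$ acts transitively on $E^N/K$ with closed stabilizer $H$ at any basepoint $Ke_0$, the quotient $U(N)\to E^N/K=U(N)/H$ is a principal $H$-bundle admitting continuous local sections. For such a section $\tau:V\to U(N)$ satisfying $Ke_0\cdot\tau(Ke)=Ke$ on a neighborhood $V$ of a given point, the map $\Psi:V\times(\eta\kappa)^{-1}(Ke_0)\to(\eta\kappa)^{-1}(V)$ defined by $(Ke,z)\mapsto z\cdot\tau(Ke)$ is a homeomorphism with inverse $w\mapsto(\eta\kappa(w),w\cdot\tau(\eta\kappa(w))^{-1})$, giving $\eta\kappa$ the structure of a fibre bundle.

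The main subtlety will be verifying that the local sections $\tau$ constructed from the principal bundles $U(N)^\n\to(E^N/K)^\n$ and $U(N)\to E^N/K$ really do produce homeomorphisms matching the excluded fibre of $\im\mu^N$ at a nearby point with the one at the basepoint; this hinges entirely on the $U(N)$-equivariance of each $\sigma_m^N$, which is transparent from the explicit formula. Once this is in place, the fibre bundle structures follow from the standard construction of a principal-bundle trivialization applied to an equivariant map to a homogeneous space.
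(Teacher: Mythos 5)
Your proposal is correct and follows essentially the same route as the paper: both rest on the characterization of $\im\mu^N$ from Lemma~\ref{lemma:10} (which you rephrase as a fibre product over $(E^N/K)^\n$), and both produce local trivializations by combining the right $U(N)$-action on Borel constructions with local sections of the orbit maps $U(N)\to E^N/K$. For part~\ref{lemma:11:p:1} your map $\Phi(x,\xi)=(x,\xi\cdot\tau(\sigma^N(x)))$ is exactly the paper's $\phi$ written with the $\n$ sections $g_m$ packaged into one $\tau$; for part~\ref{lemma:11:p:2} the paper takes the slightly longer route of first trivializing the $K$-bundle $E^N\to E^N/K$ to get the section $t$ and the map $\mathbf k$, and then taking a section $y$ of $U(N)\to E^N$, but the composite $y\circ t$ is precisely a local section of the orbit map $U(N)\to E^N/K$, so the paper's $\psi$ coincides with your $\Psi$ after unwinding; your direct use of the $U(N)$-equivariance of $\eta\kappa$ and the $U(N)$-invariance of $M^N\setminus\im\mu^N$ streamlines the same argument.
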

\begin{proof}
\ref{lemma:11:p:1}
Let $b$ be an arbitrary element (orbit) of $\BS_c(s^F,v^F)\times_KE^N$.
The right action of the unitary group $U(N)$ on $E^N$ induces a right action of $U(N)$ on $E^N/K$.
For any $m=1,\ldots,\n$, let $t_m:U(N)\to E^N/K$ be the map defined by $t_m(g)=\sigma^N_m(b)g$.
As $U(N)$ acts transitively on $E^N$, it acts transitively on $E^N/K$ and $t_m$ is a fibre bundle.
Therefore, there exists an open neighbourhood of $V_m$ of $\sigma_m(b)$ and a continuous section $g_m:V_m\to U(N)$ of $t_m$.
Hence for any $u\in V_m$, we get
\begin{equation}\label{eq:11.5}
u=t_m(g_m(u))=\sigma^N_m(b)g_m(u).
\end{equation}
We define $H=\bigcap_{m=1}^\n(\sigma^N_m)^{-1}(V_m)$.
It is an open subset of $\BS_c(s^F,v^F)\times_KE^N$ containing $b$.

We construct the map $\phi:H\times\kappa^{-1}(b)\to M^N$ by
$$
\begin{tikzcd}
\big(h,(b,a_1,\ldots,a_\n)\big)\arrow[mapsto]{r}{\phi}&\big(h,a_1g_1(\sigma^N_1(h)),\ldots,a_\n g_\n(\sigma^N_\n(h))\big)
\end{tikzcd}
$$
Here we used the right action of $U(N)$ on Borel constructions mentioned in Section~\ref{Stiefel manifolds}.
Suppose that the right-hand side of the above formula belongs to $\im\mu^N$. Let us write $a_m=K(b_m,e_m)$
for some $b_m\in\BS_c(s_{f^m},v_{f^m})$ and $e_m\in E^N$.
By Lemma~\ref{lemma:10}, we get that
$$
Ke_mg_m(\sigma^N_m(h))=\sigma^N_m(h)
$$
for any $m=1,\ldots,\n$. By~(\ref{eq:11.5}), this equality is equivalent to
$$
Ke_m=\sigma^N_m(h)g_m(\sigma^N_m(h))^{-1}=\sigma^N_m(b).
$$
Hence by Lemma~\ref{lemma:10}, we get a contradiction
$(b,a_1,\ldots,a_\n)\in\im\mu^N$.
Thus we have proved that $\im\phi\cap\im\mu^N=\emptyset$. On the other hand, it is obvious that $\im\phi\subset\kappa^{-1}(H)$.
Therefore, $\phi$ is actually a continuous map from $H\times\kappa^{-1}(b)$ to $\kappa^{-1}(H)$.

It is easy to prove that $\phi$ is a homeomorphism. Indeed the inverse map $\kappa^{-1}(H)\to H\times\kappa^{-1}(b)$
is given by
$$
\begin{tikzcd}
\big(h,a_1,\ldots,a_\n\big)\arrow[mapsto]{r}&\big(h,(b,a_1g_1(\sigma^N_1(h))^{-1},\ldots,a_\n g_\n(\sigma^N_\n(h))^{-1})\big).
\end{tikzcd}
$$
We get the following commutative diagram:
$$
\begin{tikzcd}
H\times\kappa^{-1}(b)\arrow{rr}{\phi}[swap]{\sim}\arrow{rd}[swap]{p_1}&&\kappa^{-1}(H)\arrow{ld}{\kappa}\\
&H&
\end{tikzcd}
$$
Finally note that $\kappa^{-1}(b)$ are homeomorphic for different $b$, as the space
$\BS_c(s^F,v^F)\times_KE^N$ is connected and compact.

\ref{lemma:11:p:2} Let $\bar e$ be an arbitrary point of $E^N/K$.
Let $\nu_N:E^N\to E^N/K$ denote the quotient map. As it is a principal $K$-bundle, there exists
an open neighbourhood $U\subset E^N/K$ of $\bar e$ and a homeomorphism $\phi$ such that
the following diagram is commutative
$$
\begin{tikzcd}
U\times K\arrow{rr}{\phi}[swap]{\sim}\arrow{dr}[swap]{p_1}&&\arrow{dl}{\nu_N}\nu_N^{-1}(U)\\
&U&
\end{tikzcd}
$$
and $\phi(u,k_1k_2)=k_1\phi(u,k_2)$. We set $\mathbf k=p_2\phi^{-1}$. It is a continuous
map from $\nu_N^{-1}(U)$ to $K$ such that $\phi(\nu_N(e),\mathbf k(e))=e$ for any $e\in\nu_N^{-1}(U)$.
It satisfies the following property:
$
\mathbf k(ke)=k\mathbf k(e)
$ 
for any $e\in\nu_N^{-1}(U)$ and $k\in K$.

We also consider the section $t:U\to E^N$ of $\nu_N$ given by
$t(u)=\phi(u,1)$. By definition, we get $\mathbf k(t(u))=1$.

Let $z:U(N)\to E^N$ be the map defined by $z(g)=t(\bar e)g$.
As $U(N)$ acts transitively on $E^N$, we get that $z$ is a fibre bundle.
Therefore, there exists an open neighbourhood $V$ of $t(\bar e)$
and a continuous section $y:V\to U(N)$ of $z$.
Hence for any $w\in V$, we get
\begin{equation}\label{eq:15}
w=z(y(w))=t(\bar e)y(w).
\end{equation}

Consider the set $U'=t^{-1}(V)$.
It is an open subset of $U$ and hence is an open neighbourhood of $\bar e$ in $E^N/K$.
Considering the restriction $\phi'=\phi|_{U'\times K}$, 
we get the following commutative diagram
$$
\begin{tikzcd}
U'\times K\arrow{rr}{\phi'}[swap]{\sim}\arrow{dr}[swap]{\pr_1}&&\arrow{dl}{\nu_N}\nu_N^{-1}(U')\\
&U'&
\end{tikzcd}
$$
We define the map $\psi:U'\times(\eta\kappa)^{-1}(\bar e)\to M^N$ by
$$
\begin{tikzcd}
\Big(u,\big(K([d],t(\bar e)),a_1,\ldots,a_\n\big)\Big)\arrow[mapsto]{r}{\psi}&
\Big(K([d],t(u)),a_1y(t(u)),\ldots,a_\n y(t(u))\Big).
\end{tikzcd}
$$
Suppose that the right-hand side of the above formula belongs to $\im\mu^N$.
Let us write $a_m=K(b_m,e_m)$
for some $b_m\in\BS_c(s_{f^m},v_{f^m})$ and $e_m\in E^N$.
By Lemma~\ref{lemma:10}, we get that
\begin{multline*}
Ke_my(t(u))=
\sigma^N_m\Big(K([d],t(u))\Big)
=K(d_{\min I}\cdots d_{f^1_1-1}d_{f^1_2+1}\cdots d_{f^2_1-1}\cdots\\
\shoveright{
\cdots d_{f^{m-1}_2+1}\cdots d_{f^m_1-1}\dot v_{f^1}\cdots\dot v_{f^{m-1}})^{-1}t(u).
}\\
\end{multline*}
\vspace{-29pt}

\noindent
for any $m=1,\ldots,\n$. By~(\ref{eq:15}), this equality is equivalent to
\begin{multline*}
Ke_m=K(d_{\min I}\cdots d_{f^1_1-1}d_{f^1_2+1}\cdots d_{f^2_1-1}\cdots\\
\shoveright{
\cdots d_{f^{m-1}_2+1}\cdots d_{f^m_1-1}\dot v_{f^1}\cdots\dot v_{f^{m-1}})^{-1}t(u)y(t(u))^{-1}
}\\
\shoveleft{
=K(d_{\min I}\cdots d_{f^1_1-1}d_{f^1_2+1}\cdots d_{f^2_1-1}
\cdots d_{f^{m-1}_2+1}\cdots d_{f^m_1-1}\dot v_{f^1}\cdots\dot v_{f^{m-1}})^{-1}t(\bar e)}\\
=\sigma^N_m\Big(K([d],t(\bar e))\Big).
\end{multline*}

\noindent
Hence we get a contradiction $(K([d],t(\bar e)),a_1,\ldots,a_\n)\in\im\mu^N$.
Therefore, the image of $\psi$ is a subset of $M^N\setminus\im\mu^N$.
It is easy to see that $\psi$ is actually a map to $(\eta\kappa)^{-1}(U')$.

In order to prove that $\psi$ is a homeomorphism, we need to construct the inverse map $\xi:(\eta\kappa)^{-1}(U')\to U'\times(\eta\kappa)^{-1}(\bar e)$.
It is easy to see that it is given by
$$
\hspace{-50pt}
\begin{tikzcd}
\Big(K([d],e),a_1,\ldots,a_\n\Big)\arrow[mapsto]{r}&\Big(\nu_N(e),\big(K(\mathbf k(e)^{-1}[d],t(\bar e)),a_1y(t(\nu_N(e)))^{-1},\ldots
\end{tikzcd}
$$
$$
\hspace{320pt}
\ldots ,a_\n y(t(\nu_N(e)))^{-1}\big)\Big).
$$
We get the following commutative diagram:
$$
\begin{tikzcd}
U'\times(\eta\varkappa)^{-1}(\bar e)\arrow{rr}{\psi}[swap]{\sim}\arrow{dr}[swap]{\pr_1}&&\arrow{dl}{\eta\varkappa}(\eta\varkappa)^{-1}(U')\\
&U'&
\end{tikzcd}
$$
Finally, note that $(\eta\varkappa)^{-1}(\bar e)$ are homeomorphic for different $\bar e$, as $E^N/K$ is connected and compact.
\end{proof}

\begin{lemma}\label{lemma:13}
Suppose that $(s,v)$ is of gallery type.
\begin{enumerate}
\item\label{lemma:13:p:1} If $\r<N<\infty$, then $H_c^n(M^N\setminus\im\mu^N,\k)=0$ for odd $n<2(N-\r)-\gld\k$.\\[-6pt]
\item\label{lemma:13:p:2} If $\r<N<\infty$, then the restriction map $H^n(M^N,\k)\to H^n(\BS_c(s,v)\times_K E^N,\k)$
                          induced by $\mu^N$ is surjective for $n<2(N-\r)-\gld\k-1$.\\[-6pt]
\item\label{lemma:13:p:3} For any $n$, the restriction map $H^n(M,\k)\to H^n_K(\BS_c(s,v),\k)$ induced by $\mu$
                          is surjective.
\end{enumerate}
\end{lemma}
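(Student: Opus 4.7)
I address the three parts in sequence, exploiting the fibre bundle structure of Lemma~\ref{lemma:11}.

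For part~\ref{lemma:13:p:1}, I apply the Leray--Serre spectral sequence for compactly supported cohomology to the fibre bundle $\kappa\colon M^N\setminus\im\mu^N\to\BS_c(s^F,v^F)\times_KE^N$ of Lemma~\ref{lemma:11}\ref{lemma:11:p:1}. The base is simply connected (high connectivity of the Stiefel manifold $E^N$ together with the affine paving of $\BS_c(s^F,v^F)$ given by Proposition~\ref{proposition:bt10:1}), and its cohomology is free and concentrated in even degrees up to degree $2(N-\r)$ (the Leray--Serre spectral sequence for $\eta\colon\BS_c(s^F,v^F)\times_KE^N\to E^N/K$ degenerates at $E_2$ by parity, using that $H^\bullet(E^N/K,\k)$ realises $S^\bullet$ in the stable range). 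By Lemma~\ref{lemma:10} the typical fibre $\kappa^{-1}(b)$ is the complement in $\prod_{m=1}^\n\bigl(\BS_c(s_{f^m},v_{f^m})\times_KE^N\bigr)$ of the closed subspace $\prod_{m=1}^\n\pi_m^{-1}(\sigma_m^N(b))$, where $\pi_m$ denotes the canonical projection to $E^N/K$. Each fibre $\pi_m^{-1}(\sigma_m^N(b))$ is homeomorphic to $\BS_c(s_{f^m},v_{f^m})$, which has an affine paving by Proposition~\ref{proposition:bt10:1}, and each Leray--Serre spectral sequence for $\pi_m$ degenerates at $E_2$ by parity; hence the individual restrictions $H^\bullet(\BS_c(s_{f^m},v_{f^m})\times_KE^N)\to H^\bullet(\BS_c(s_{f^m},v_{f^m}))$ are split surjections of free modules concentrated in even degrees. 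Applying Künneth via Lemmas~\ref{lemma:coh:1} and~\ref{lemma:coh:2} (the latter controlling the derived Tor defect by $\gld\k$) yields a surjection between free, evenly graded cohomologies of the products in the range $n<2(N-\r)-\gld\k$. The long exact sequence for the closed pair then forces $H^n_c(\kappa^{-1}(b),\k)=0$ for odd $n<2(N-\r)-\gld\k$, and the Leray--Serre spectral sequence for $\kappa$ establishes part~\ref{lemma:13:p:1}.

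For part~\ref{lemma:13:p:2}, since $M^N$ is compact and $\im\mu^N$ is closed by Lemma~\ref{lemma:10}, I use the long exact sequence of the closed pair
\[
\cdots\to H^n(M^N)\to H^n(\im\mu^N)\to H^{n+1}_c(M^N\setminus\im\mu^N)\to H^{n+1}(M^N)\to\cdots.
\]
For odd $n<2(N-\r)-\gld\k-1$, both $H^n(M^N)$ and $H^n(\im\mu^N)\cong H^n(\BS_c(s,v)\times_KE^N)$ vanish by the parity arguments above (affine pavings of Proposition~\ref{proposition:bt10:1} combined with Künneth), so the restriction is trivially surjective. For even $n$ in the stated range, $n+1$ is odd with $n+1<2(N-\r)-\gld\k$, so part~\ref{lemma:13:p:1} gives $H^{n+1}_c(M^N\setminus\im\mu^N)=0$, and the restriction is surjective.

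For part~\ref{lemma:13:p:3}, given any $n$ I choose $N>\r$ with $n<2(N-\r)-\gld\k-1$ and invoke part~\ref{lemma:13:p:2}. Passing to the direct limit $N\to\infty$, the cohomology groups $H^n(M^N)$ and $H^n(\BS_c(s,v)\times_KE^N)$ stabilise in this range (Milnor's $\ilim^1$-term vanishes since the system eventually becomes constant), so surjectivity is transported to $\mu$.

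The principal obstacle is the careful interplay of three ranges: the stable range $2(N-\r)$ for $E^N/K\to B_K$, the Künneth bound supplied by Lemma~\ref{lemma:coh:2}, and the parity-degeneracy ranges of the Leray--Serre spectral sequences. The precise threshold $2(N-\r)-\gld\k$ arises because the $\gld\k$ penalty is incurred only once, through the derived Künneth applied to the product $\prod_{m=1}^\n\BS_c(s_{f^m},v_{f^m})\times_KE^N$. A secondary subtlety is that the surjectivity of the edge homomorphism of Leray--Serre --- needed to kill the connecting map of the long exact sequence for the closed pair at every even degree in range --- rests on the equivariant formality afforded by the affine paving of Proposition~\ref{proposition:bt10:1}.
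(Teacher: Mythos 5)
Your proof is correct in outline, and parts (2) and (3) coincide with the paper's. The substantive difference is in part (1): the paper runs the Leray spectral sequence for compactly supported cohomology on the \emph{composite} $\eta\kappa$ (whose base $E^N/K$ is simply connected in the stable range) and then treats the fibre $(\eta\kappa)^{-1}(\bar e)$ with a second spectral sequence for the restriction $\widetilde\kappa$, explicitly allowing the resulting local system $\F^t$ on $\eta^{-1}(\bar e)\cong\BS_c(s^F,v^F)$ to be non-constant and handling it via proper base change together with the affine paving. You instead run a single spectral sequence directly on $\kappa$, whose base is the Borel construction $\BS_c(s^F,v^F)\times_KE^N$, and conclude the coefficient system is constant because that base is simply connected. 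This is a legitimate simplification, but be aware of what it buys and what it costs. It requires knowing that $\BS_c(s^F,v^F)$ itself is simply connected (the fibration $\BS_c(s^F,v^F)\to\BS_c(s^F,v^F)\times_KE^N\to E^N/K$ makes $\pi_1$ of the Borel construction a quotient of $\pi_1(\BS_c(s^F,v^F))$), and you assert this by appeal to the affine paving of Proposition~\ref{proposition:bt10:1}; this is true, but it is an extra step the paper deliberately avoids invoking --- which is precisely why the paper introduces the auxiliary bundle $\eta\kappa$ of Lemma~\ref{lemma:11}\ref{lemma:11:p:2}, a part of that lemma your argument never uses. Your treatment of the fibre $\kappa^{-1}(b)$ as a complement in a product, and the parity/K\"unneth bookkeeping leading to the threshold $2(N-\r)-\gld\k$, matches the paper's invocation of \cite[Lemma~23]{NFBBSV} in spirit, though the paper's citation packages the long-exact-sequence and K\"unneth analysis more cleanly. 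In short: same essential mechanism, one spectral sequence instead of two, at the modest price of an unstated simply-connectedness claim for $\BS_c(s^F,v^F)$ that should be made explicit.
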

\begin{proof}
We write the Leray spectral sequence for cohomologies with compact support for $\eta\varkappa$, where $\eta$ and $\varkappa$ are
as in Lemma~\ref{lemma:11}. As by part~\ref{lemma:11:p:2} of this lemma $\eta\varkappa$ is a fibre bundle and $E^N/K$ is
simply connected, it has the following second page:
$$
E_2^{p,q}=H_c^p\big(E^N/K,H_c^q((\eta\varkappa)^{-1}(\bar e),\k)\big),
$$
where $\bar e$ is an arbitrary point of $E^N/K$.
We claim that
{\renewcommand{\labelenumi}{{$(*)$}}
\renewcommand{\theenumi}{{$(*)$}}
\begin{enumerate}
\item\label{lemma:13:p:a} $H_c^q((\eta\varkappa)^{-1}(\bar e),\k)$ is free of finite rank and equals zero for odd $q$ if $q<2(N-\r)-\gld\k$;\\[-9pt]
\end{enumerate}}
In order to prove it, we consider the map $\widetilde\varkappa:(\eta\varkappa)^{-1}(\bar e)\to\eta^{-1}(\bar e)$
that is the restriction of $\varkappa$. Note that $\widetilde\varkappa$ is a fibre bundle
as a restriction of $\kappa$, which is a fibre bundle by Lemma~\ref{lemma:11}\ref{lemma:11:p:1}.
Hence we can consider the Leray spectral sequence for cohomologies with compact support
for $\widetilde\varkappa$. It has the following second page:
$$
\widetilde E_2^{r,t}=H_c^r\Big(\eta^{-1}(\bar e),\F^t\Big),
$$
where $\F^t=\H^t\widetilde\kappa_!\csh{\k}{(\eta\varkappa)^{-1}(\bar e)}$.
This is a local (not necessarily constant) system
on $\eta^{-1}(\bar e)\cong\BS_c(s^F,v^F)$. We claim that
{\renewcommand{\labelenumi}{{\rm ($**$)}}
\renewcommand{\theenumi}{{($**$)}}
\begin{enumerate}
\item\label{lemma:13:p:i} the stalks of $\F^t$ are free of finite rank and equal zero for odd $t$ if $t<2(N-\r)-\gld\k$;\\[-9pt]
\end{enumerate}}
To prove it, let us take an arbitrary point $b\in\eta^{-1}(\bar e)$ and consider the following Cartesian diagram:
$$
\begin{tikzcd}
\kappa^{-1}(b)\arrow[hook]{r}{\tilde\imath}\arrow{d}[swap]{\kappa_b}&(\eta\kappa)^{-1}(\bar e)\arrow{d}{\widetilde\kappa}\\
\{b\}\arrow[hook]{r}{i}&\eta^{-1}(\bar e)
\end{tikzcd}
$$
Hence by the proper base change, we get
$$
i^*\F^t=\H^ti^*\widetilde\kappa_!\csh{\k}{(\eta\varkappa)^{-1}(\bar e)}
=\H^t(\kappa_b)_!\tilde\imath^*\csh{\k}{(\eta\varkappa)^{-1}(\bar e)}=
\H^t(\kappa_b)_!\csh{\k}{\kappa^{-1}(b)}=H_c^t(\kappa^{-1}(b),\k).
$$
To compute the last module, consider the canonical projections $\eta_m:\BS_c(s_{f^m},v_{f^m})\times_K E^N\to E^N/K$
for $m=1,\ldots,\n$. Their direct product
$$
\eta_1\times\cdots\times\eta_\n:(\BS_c(s_{f^1},v_{f^1})\times_K E^N)\times\cdots\times(\BS_c(s_{f^\n},v_{f^\n})\times_KE^N)\to(E^N/K)^\n
$$
is a fibre bundle with fibre $\BS_c(s_{f^1},v_{f^1})\times\cdots\times\BS_c(s_{f^\n},v_{f^\n})$.
It has an affine paving by Proposition~\ref{proposition:bt10:1}. 
By Lemma~\ref{lemma:10} (the claim about the image), we get that $\kappa^{-1}(b)$ is homeomorphic to
$$
(\BS_c(s_{f^1},v_{f^1})\times_KE^N)\times\cdots\times(\BS_c(s_{f^\n},v_{f^\n})\times_KE^N)\setminus(\eta_1\times\cdots\times\eta_\n)^{-1}(\sigma^N_1(b),\ldots,\sigma^N_\n(b)),
$$
By~\cite[Lemma~19(2)]{NFBBSV} and the K\"unneth formula in form~(\ref{eq:coh:12}),
we get that $H^t((E^N/K)^\n,\k)$ is free of finite rank and
equals zero for odd $t$ if $t<2(N-\r)+1-\gld\k$. Therefore, we can apply~\cite[Lemma~23]{NFBBSV}
to 
the fibre bundle $\eta_1\times\cdots\times\eta_\n$.
We thus get that $H_c^t(\kappa^{-1}(b),\k)$ is free of finite rank and equals zero for odd $t$ if $t<2(N-\r)-\gld\k$.
Thus we have proved~\ref{lemma:13:p:i}.

By
Proposition~\ref{proposition:bt10:1},
the space $\BS_c(s^F,v^F)$ has an affine paving.
Therefore, by~\ref{lemma:13:p:i},
we get that $\widetilde E_2^{r,t}$ is zero
except the following cases both $r$ and $t$ are even; $t\ge 2(N-\r)-\gld\k$.
Moreover $\widetilde E_2^{r,t}$ is free of finite rank for $t<2(N-\r)-\gld\k$.

The differentials coming to and starting from $\widetilde E_a^{r,t}$ are zero for $a\ge2$
if $r+t<2(N-\r)-\gld\k$. Thus $\widetilde E_\infty^{r,t}=\widetilde E_2^{r,t}$ for $r+t<2(N-\r)-\gld\k$.
Therefore, $H_c^q((\eta\varkappa)^{-1}(\bar e),\k)$ is free of finite rank and equals zero for
odd $q$ if $q<2(N-\r)-\gld\k$. Thus~\ref{lemma:13:p:a} is proved.

Finally, let us look at $E_2^{p,q}$. By~\ref{lemma:13:p:a} and~\cite[Lemma~19(2)]{NFBBSV},
we get that $E_2^{p,q}$ is zero except the following cases:
$p$ and $q$ are both even; $p\ge2(N-\r)+1$; $q\ge2(N-\r)-\gld\k$.

That the differentials coming to and starting from $E_a^{p,q}$ are zero for $a\ge2$
if $p+q<2(N-\r)-\gld\k$. Thus $E_\infty^{p,q}=E_2^{p,q}$ for $p+q<2(N-\r)-\gld\k$.
It follows now from the Leray spectral sequence that $H^n_c(M^N\setminus\im\mu^N,\k)=0$ for odd
$n<2(N-\r)-\gld\k$.

\ref{lemma:13:p:2}. Let $n<2(N-\r)-\gld\k-1$. If $n$ is even, then  by the first part of this lemma,
we get an exact sequence
$$
 H^n(M^N,\k)\to H^n(\BS_c(s,v)\times_K E^N,\k)\to H_c^{n+1}(M^N\setminus\im\mu^N,\k)=0.
$$
If $n$ is odd, then the restriction under consideration is surjective as $H^n(\BS_c(s,v)\times_K E^N,\k)=0$
by~\cite[Lemma~20]{NFBBSV}.

\ref{lemma:13:p:3}. This result follows from the previous part and~\cite[Lemma~18]{NFBBSV}
and the following commutative diagram:
$$
\begin{tikzcd}
H^n(M^N,\k)\arrow{r}{(\mu^N)^*}\arrow[equal]{d}[swap]{\wr}&H^n(\BS_c(s)\times_KE^N,\k)\arrow[equal]{d}{\wr}\\
H^n(M,\k)\arrow{r}{\mu^*}&H_K^n(\BS_c(s),\k)
\end{tikzcd}
$$
which holds for $n<2(N-\r)+1$.
\end{proof}

{\bf Remark.}
It easily follows from this proof that the kernels of the restrictions $H^n(M,\k)\to H_K^n(\BS_c(s,v),\k)$
are free of finite rank. We do not need this fact in the sequel.

\subsection{Tensor product decomposition}\label{The_main_result}
Note that for a pair $(s,v)$ of gallery type, we get by the K\"unneth formula and~\cite[Lemma~18]{NFBBSV}
the map
$$
\mu^*:
H^\bullet(M,\k)\cong
H^\bullet_K(BS_c(s^F,v^F),\k)\otimes_\k
\bigotimes_{m=1}^{\n}{}_\k H^\bullet_K(\BS_c(s_{f^m},v_{f^m}),\k)
\to H^\bullet_K(\BS_c(s,v),\k).
$$
By Lemma~\ref{lemma:13}\ref{lemma:13:p:3}, this map is surjective.
We would like to study its kernel.
Let $Q=S\otimes_\k\cdots\otimes_\k S$ be the tensor product of $\n$ copies
of $S$. Clearly $Q$ is a commutative ring isomorphic to the polynomial ring $\k[x_1,\ldots,x_{\n}]$.
We consider $H^\bullet_K(BS_c(s^F,v^F),\k)$ as an $S$-$Q$-bimodule with the canonical left action of $S$
and the following right action of $Q$:
$$
h(c_1\otimes\cdots\otimes c_\n)=h\cup\sigma_1^*(c_1)\cup\cdots\cup\sigma_\n^*(c_\n)
$$
for $h\in H^\bullet_K(BS_c(s^F,v^F),\k)$ and $c_m\in S$.
Here $\sigma_m$ is the map as in Section~\ref{Embeddings_of_the_Borel_constructions}.
On the other hand, we consider the tensor product
$$
\bigotimes_{m=1}^{\n}{}_\k H^\bullet_K(\BS_c(s_{f^m},v_{f^m}),\k)
$$
as the left $Q$-module such that the $m$th copy of $S$ acts on $H^\bullet_K(\BS_c(s_{f^m},v_{f^m}),\k)$
canonically:
$$
(c_1\otimes\cdots\otimes c_\n)(h_1\otimes\cdots\otimes h_\n)=(\eta_1^*(c_1)\cup h_1)\otimes\cdots\otimes(\eta_\n^*(c_\n)\cup h_\n),
$$
where $\eta_m:\BS_c(s_{f^m},v_{f^m})\times_KE^\infty\to E^\infty/K$ is the canonical projection.

As the space $E^\infty/K$ is connected, we get $\sigma_m^*(c)=(\eta^F)^*(c)=c$, for any $c\in S^0=H^0(\pt,\k)$,
where $\eta^F:\BS_c(s^F,v^F)\times_KE^\infty\to E^\infty/K$ is the canonical projection.
Thus considering $Q$ as a $Q$-$\k$-bimodule, we get that
\begin{equation}\label{eq:bt6:10}
H_K^\bullet(BS_c(s^F,v^F),\k)\otimes_QQ\cong H_K^\bullet(BS_c(s^F,v^F),\k)
\end{equation}
as a $\k$-$\k$-bimodule with respect to the canonical actions of $\k$ on both sides.

%
%

In the proof of Lemma~\ref{lemma:10}, it was shown that the image of $\mu^N$ satisfies~(\ref{eq:13}).
This argument applies equally well to the case $N=\infty$. Therefore, we get the following commutative diagram:
$$
\begin{tikzcd}
\BS_c(s,v)\times_KE^\infty\arrow{r}{p_1\mu}\arrow{d}[swap]{p_{m+1}\mu}&\BS_c(s^F,v^F)\times_KE^\infty\arrow{d}{\sigma_m}\\
\BS_c(s_{f^m},v_{f^m})\times_KE^\infty\arrow{r}{\eta_m}& E^\infty/K
\end{tikzcd}
$$

Suppose that we have homogeneous elements $h\in H^\bullet_K(\BS_c(s^F,v^F),\k)$ and
$c_m\in S$, $h_m\in H^\bullet_K(\BS_c(s_{f^m},v_{f^m}),\k)$ for $m=1,\ldots,\n$.
Taking cohomologies in the above diagram and using identification~(\ref{eq:coh:12}),
we get
\begin{multline*}
\mu^*\big(h\otimes((c_1\otimes\cdots\otimes c_\n)(h_1\otimes\cdots\otimes h_\n))\big)=\mu^*\big(h\otimes(\eta_1^*(c_1)\cup h_1)\otimes\cdots\otimes(\eta_\n^*(c_\n)\cup h_\n)\big)\\
\shoveleft{
=\mu^*\big(p_1^*(h)\cup p_2^*(\eta_1^*(c_1)\cup h_1)\cup\cdots\cup p_{\n+1}^*(\eta_\n^*(c_\n)\cup h_\n)\big)}\\
=(p_1\mu)^*(h)\cup(\eta_1p_2\mu)^*(c_1)\cup(p_2\mu)^*(h_1)\cup\cdots\cup(\eta_\n p_{\n+1}\mu)^*(c_\n)\cup(p_{\n+1}\mu)^*(h_\n)\\
=(p_1\mu)^*(h)\cup(\sigma_1p_1\mu)^*(c_1)\cup(p_2\mu)^*(h_1)\cup\cdots\cup(\sigma_\n p_1\mu)^*(c_\n)\cup(p_{\n+1}\mu)^*(h_\n)\\
=\mu^*\big(p_1^*(h\cup\sigma_1^*(c_1)\cup\cdots\cup\sigma_\n^*(c_\n))\cup p_2^*(h_1)\cup\cdots\cup p_{\n+1}^*(h_\n)\big)\\
=\mu^*(h(c_1\otimes\cdots\otimes c_\n)\otimes h_1\otimes\cdots\otimes h_\n).
\end{multline*}
Hence
$$
h\otimes((c_1\otimes\cdots\otimes c_\n)(h_1\otimes\cdots\otimes h_\n))-h(c_1\otimes\cdots\otimes c_\n)\otimes h_1\otimes\cdots\otimes h_\n\in\ker\mu^*.
$$
Let $A$ be the $\k$-submodule of $H^\bullet(M,\k)$ generated by all the above differences.
Clearly $A\subset\ker\mu^*$. 
Therefore, applying~(\ref{eq:bt6:10}), we get that the quotient $H^\bullet(M,\k)/A$ is the following tensor product:
\begin{equation}\label{eq:bt6:11}
\begin{array}{l}\!\!\!\!\!
\displaystyle
H^\bullet_K(BS_c(s^F,v^F),\k)\otimes_Q\bigotimes_{m=1}^\n{\vphantom{\bigotimes}}_\k H^\bullet_K(\BS_c(s_{f^m},v_{f^m}),\k)\\
\qquad\qquad
\displaystyle
\cong H_K^\bullet(BS_c(s^F,v^F),\k)\otimes_Q\bigotimes_{m=1}^\n{\vphantom{\bigotimes}}_\k S\otimes_\k H^\bullet(\BS_c(s_{f^m},v_{f^m}),\k)\\
\qquad\qquad
\displaystyle
\cong H_K^\bullet(BS_c(s^F,v^F),\k)\otimes_QQ\otimes_\k\bigotimes_{m=1}^\n{\vphantom{\bigotimes}}_\k H^\bullet(\BS_c(s_{f^m},v_{f^m}),\k)\\
\qquad\qquad
\displaystyle
\cong H_K^\bullet(BS_c(s^F,v^F),\k)\otimes_\k\bigotimes_{m=1}^\n{\vphantom{\bigotimes}}_\k H^\bullet(\BS_c(s_{f^m},v_{f^m}),\k)\\
\qquad\qquad\qquad\qquad\qquad
\displaystyle
\cong S\otimes_\k H^\bullet(BS_c(s^F,v^F),\k)\otimes_\k\bigotimes_{m=1}^\n{\vphantom{\bigotimes}}_\k H^\bullet(\BS_c(s_{f^m},v_{f^m}),\k).
\end{array}
\end{equation}
On the other hand, by~\cite[Theorem~7]{NFBBSV}, applying the Leray spectral sequence, we get

$$
H^\bullet(\BS_c(s,v),\k)\cong H^\bullet(\BS_c(s^F,v^F),\k)\otimes_\k\bigotimes_{m=1}^\n{\vphantom{\bigotimes}}_\k H^\bullet(\BS_c(s_{f^m},v_{f^m}),\k).
$$
as graded $\k$-modules. Hence $H_K^\bullet(\BS_c(s,v),\k)\cong S\otimes_\k H^\bullet(\BS_c(s,v),\k)\cong H^\bullet(M,\k)/A$.
Hence we get $A=\ker\mu^*$ by the following result.

\begin{proposition} Let $L\subset L'\subset M$ be $\k$-modules such that
$$
M/L\cong \k^n,\quad M/L'\cong\k^n
$$
for some integer $n$. Then $L=L'$.
\end{proposition}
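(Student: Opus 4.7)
The plan is to reduce this to the classical fact that a surjective endomorphism of a finitely generated module over a commutative ring is automatically injective.

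First I would observe that since $L \subset L'$, there is a well-defined quotient map $\pi : M/L \twoheadrightarrow M/L'$ whose kernel is exactly $L'/L$. So the entire claim reduces to showing that $\pi$ is injective. After choosing any isomorphisms $M/L \cong \k^n$ and $M/L' \cong \k^n$, the map $\pi$ becomes a surjective $\k$-linear endomorphism $\phi : \k^n \to \k^n$, and it suffices to prove $\phi$ is an isomorphism.

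For this I would apply the Cayley--Hamilton / determinant trick: regard $\k^n$ as a $\k[x]$-module via $x \cdot m = \phi(m)$. Since $\phi$ is surjective we have $x \cdot \k^n = \k^n$, i.e.\ the finitely generated $\k[x]$-module $\k^n$ satisfies $\k^n = (x)\k^n$. The determinant trick then produces a polynomial $p(x) = 1 + x\,q(x) \in \k[x]$ which annihilates $\k^n$. Evaluating at $\phi$ gives the identity $m + q(\phi)\phi(m) = 0$ for every $m \in \k^n$. If $\phi(m) = 0$ this immediately forces $m = 0$, so $\phi$ is injective, hence an isomorphism.

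Consequently $\pi$ is an isomorphism, $L'/L = \ker \pi = 0$, and $L = L'$. There is no real obstacle here; the only subtlety is that the two given isomorphisms $M/L \cong \k^n$ and $M/L' \cong \k^n$ are \emph{a priori} unrelated, but this causes no trouble because the determinant-trick argument applies to \emph{any} surjective endomorphism of $\k^n$ regardless of how it arose. Note that commutativity of $\k$ (assumed at the beginning of Section~\ref{Tensor products}) is essential for invoking Cayley--Hamilton; no hypothesis on the global dimension is needed for this proposition.
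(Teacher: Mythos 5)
Your proof is correct and follows essentially the same route as the paper: both reduce the claim to the surjectivity of the natural quotient map $M/L \twoheadrightarrow M/L'$, which (after choosing isomorphisms) is a surjective endomorphism of $\k^n$, and then invoke the standard fact that such an endomorphism is injective over a commutative ring. The only difference is that the paper simply cites Rotman's textbook for this last step, while you prove it inline via the determinant trick.
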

\begin{proof} It follows from~\cite[Theorem~3.6]{Rotman} applied to the quotient homomorphism
$$
M/L\to(M/L)/(L'/L)\cong M/L'.
$$
\end{proof}

\begin{theorem}\label{theorem:bt7:3} For a pair $(s,v)$ is of gallery type,
the embedding $\mu$ induces an isomorphism of left $S$-modules
$$
\mu^\circ:
H^\bullet_K(BS_c(s^F,v^F),\k)\otimes_Q\bigotimes_{m=1}^\n{\vphantom{\bigotimes}}_\k H^\bullet_K(\BS_c(s_{f^m},v_{f^m}),\k)
\ito H^\bullet_K(BS_c(s,v),\k).
$$
\end{theorem}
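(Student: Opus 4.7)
The plan is to factor the surjection $\mu^*$ from Lemma~\ref{lemma:13}\ref{lemma:13:p:3} through the relation submodule $A\subset H^\bullet(M,\k)$ defined earlier, producing the candidate map $\mu^\circ$, and then to prove $\mu^\circ$ is an isomorphism by a degreewise rank comparison. First I would apply Lemma~\ref{lemma:10} at $N=\infty$ to obtain the commutative square
$$
\begin{tikzcd}
\BS_c(s,v)\times_KE^\infty \arrow{r}{p_1\mu}\arrow{d}[swap]{p_{m+1}\mu} & \BS_c(s^F,v^F)\times_KE^\infty\arrow{d}{\sigma_m}\\
\BS_c(s_{f^m},v_{f^m})\times_KE^\infty\arrow{r}{\eta_m} & E^\infty/K
\end{tikzcd}
$$
linking the canonical projections $\sigma_m$ and $\eta_m$ with the components of $\mu$. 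Combined with the K\"unneth identification~(\ref{eq:coh:12}), this square gives the equality $\eta_m p_{m+1}\mu=\sigma_m p_1\mu$, and chasing the cup products in $\mu^*$ applied to both sides of the defining relation of $\otimes_Q$ shows that $A\subset\ker\mu^*$. This descends $\mu^*$ to a surjective $S$-linear morphism $\mu^\circ$ on the quotient, which is $S$-linear because $\mu^*$ is an $S$-algebra map and the left $S$-action on both sides is the canonical one.

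For injectivity, I would invoke the chain of identifications~(\ref{eq:bt6:11}), simplified via the projection formula~(\ref{eq:bt6:10}), to see that as a graded $\k$-module
$$
H^\bullet(M,\k)/A\cong S\otimes_\k H^\bullet(\BS_c(s^F,v^F),\k)\otimes_\k\bigotimes_{m=1}^\n{}_\k H^\bullet(\BS_c(s_{f^m},v_{f^m}),\k).
$$
On the target side, Theorem~7 of \cite{NFBBSV} presents $p^F$ as a fibre bundle with fibre $\prod_{m=1}^\n\BS_c(s_{f^m},v_{f^m})$, and Proposition~\ref{proposition:bt10:1} ensures that both base and fibre admit affine pavings. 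The Leray spectral sequence for $p^F$ (and then for the Borel construction) therefore degenerates, giving
$$
H^\bullet_K(\BS_c(s,v),\k)\cong S\otimes_\k H^\bullet(\BS_c(s^F,v^F),\k)\otimes_\k\bigotimes_{m=1}^\n{}_\k H^\bullet(\BS_c(s_{f^m},v_{f^m}),\k)
$$
with the same graded $\k$-rank in each degree as $H^\bullet(M,\k)/A$. As the affine paving input also makes both sides finitely generated free $\k$-modules in each degree, the proposition proved immediately before the theorem, applied in each degree to the chain $A\subset\ker\mu^*\subset H^\bullet(M,\k)$, forces $A=\ker\mu^*$, so $\mu^\circ$ is injective as well.

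The hardest step is the inclusion $A\subset\ker\mu^*$: expanding $\mu^*\bigl(h\otimes((c_1\otimes\cdots\otimes c_\n)(h_1\otimes\cdots\otimes h_\n))\bigr)$ and $\mu^*\bigl(h(c_1\otimes\cdots\otimes c_\n)\otimes h_1\otimes\cdots\otimes h_\n\bigr)$ via the K\"unneth formula produces two strings of six cup products, and the identification of these strings rests on the commutation $\eta_m p_{m+1}\mu=\sigma_m p_1\mu$ made possible by the image description in Lemma~\ref{lemma:10}; keeping straight which copy of $S$ acts by which pullback on which factor, and verifying that the push-forward $Q$-action on $H^\bullet_K(\BS_c(s^F,v^F),\k)$ via the $\sigma_m^*$ genuinely matches the pulled-back $Q$-action on the tensor product of fibre cohomologies via the $\eta_m^*$, is the book-keeping where I would be most careful.
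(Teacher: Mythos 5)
Your proposal is correct and follows essentially the same route as the paper: factor $\mu^*$ through the relation submodule $A$, verify $A\subset\ker\mu^*$ via the commuting square $\eta_m p_{m+1}\mu=\sigma_m p_1\mu$ and the K\"unneth product, compute $H^\bullet(M,\k)/A$ via the chain~(\ref{eq:bt6:11}), compare $\k$-ranks in each degree against $H^\bullet_K(\BS_c(s,v),\k)$ using the fibre bundle $p^F$ and the affine pavings, and invoke the preceding Proposition to force $A=\ker\mu^*$. The only place where the paper is slightly more explicit is the $S$-linearity check, which it settles by a commuting triangle $\eta^F\circ(p_1\mu)=\eta$ of canonical projections to $E^\infty/K$ rather than your appeal to "$\mu^*$ being an $S$-algebra map," but that is just a compressed phrasing of the same observation.
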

\begin{proof}
It remains to prove that this map is an isomorphism of $S$-modules.
It suffices to prove that $\mu^*$ is a morphism of $S$-modules.
We have the following commutative diagram:
$$
\begin{tikzcd}
\BS_c(s,v)\times_KE^\infty\arrow{r}{\eta}\arrow{d}[swap]{p_1\mu}&E^\infty/K\\
\BS_c(s^F,v^F)\times_KE^\infty\arrow{ur}[swap]{\eta_F}
\end{tikzcd}
$$
where $\eta$ is the canonical projection.

Suppose that we have homogeneous elements $h\in H^\bullet_K(BS_c(s^F,v^F)$, $c\in S$ and
$h_m\in H^\bullet_K(\BS_c(s_{f^m},v_{f^m}),\k)$ for $m=1,\ldots,\n$. Then we get
\begin{multline*}
\mu^*\big(c(h\otimes h_1\otimes\cdots\otimes h_\n)\big)=\mu^*\big(ch\otimes h_1\otimes\cdots h_\n\big)
=\mu^*\big(p_1^*(ch)\cup p_2^*(h_1)\cup\cdots\cup p_\n^*(h_\n)\big)\\
=\mu^*\big((\eta^Fp_1)^*(c)\cup p_1^*(h)\cup p_2^*(h_1)\cup\cdots\cup p_\n^*(h_\n)\big)
=(\eta^Fp_1\mu)^*(c)\cup\mu^*\big(p_1^*(h)\cup p_2^*(h_1)\cup\cdots\cup p_\n^*(h_\n)\big)\\
=\eta^*(c)\cup\mu^*\big(p_1^*(h)\cup p_2^*(h_1)\cup\cdots\cup p_\n^*(h_\n)\big)
=c\,\mu^*(h\otimes h_1\otimes\cdots\otimes h_\n).
\end{multline*}
\end{proof}

\section{Results for the big torus}\label{Results for the big torus}

\subsection{Fixed points} In Section~\ref{Definition via the compact torus}, we identified the set of $K$-fixed points $\BS_c(s)^K$ with
the set of generalized combinatorial galleries $\Gamma(s)$. We would like to know what the constructions of
Section~\ref{Nested_fibre_bundles} look like when restricted to the fixed points under this identification.
We set
$$
\Gamma(s,v)=\{\gamma\in\Gamma(s)\suchthat\forall r\in R:\gamma_{r_1}\gamma_{r_1+1}\cdots\gamma_{r_2}=v_r\}.
$$
We also define the map $p^F_K:\Gamma(s,v)\to\Gamma(s^F,v^F)$ by
$$
p^F_K(\gamma)_i=v^i\gamma_i(v^i)^{-1}.
$$
We get the following obvious result.

\begin{lemma} We have $\BS_c(s,v)^K=\Gamma(s,v)$. Moreover, the projection $p^F$ restricted to the $K$-fixed
points is $p^F_K$.
\end{lemma}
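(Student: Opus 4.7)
For the first assertion, I would unwind definitions. A point of $\BS_c(s)^K$ corresponds under the identification of Section~\ref{Definition via the compact torus} to some $\gamma\in\Gamma(s)$, represented by the sequence $c$ with $c_i=\dot\gamma_i$. The membership $[c]\in\BS_c(s,v)$ is defined by $c_{r_1}\cdots c_{r_2}\in v_rK$ for every $r\in R$, a condition that is well-posed on orbits by the observation $\dot v_rK=K\dot v_r$ recalled after~\eqref{eq:bt7:10}. Since the isomorphism $\phi:W\ito\mathcal N/K$ sends $\gamma_{r_1}\cdots\gamma_{r_2}$ to the coset $\dot\gamma_{r_1}\cdots\dot\gamma_{r_2}K$, the condition $\dot\gamma_{r_1}\cdots\dot\gamma_{r_2}\in\dot v_rK$ is equivalent to $\gamma_{r_1}\cdots\gamma_{r_2}=v_r$ in $W$. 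This gives $\BS_c(s,v)^K=\Gamma(s,v)$ directly.

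For the second assertion, take $\gamma\in\Gamma(s,v)$. Since $p^F$ is defined via an $F$-balanced representative, the first task is to produce one lying in the orbit $[\dot\gamma]$. For each $f\in F$, write $\dot\gamma_{f_1}\cdots\dot\gamma_{f_2}=\dot v_f k_f$ with $k_f\in K$; such $k_f$ exists because $\gamma\in\Gamma(s,v)$. Define $c\in C(s)$ by replacing $\dot\gamma_{f_2}$ with $\dot\gamma_{f_2}k_f^{-1}$ for every $f\in F$ and leaving the remaining entries equal to $\dot\gamma_i$. The replaced entry still lies in $C_{s_{f_2}}$ because $k_f^{-1}\in K\subset C_{s_{f_2}}$, and by construction $c_{f_1}\cdots c_{f_2}=\dot v_f$, so $c$ is $F$-balanced. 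To check $[c]=[\dot\gamma]$, I would produce an element $k\in K(I)$ with $c=\dot\gamma\cdot k$ by induction on $i$, solving $k_i=\dot\gamma_i^{-1}k_{i-1}\dot\gamma_i$ at indices $i\notin\{f_2:f\in F\}$ and $k_{f_2}=k_{f_2-1}\cdot k_f^{-1}$-type adjustments at the special indices; at every step the new $k_i$ lies in $K$ because $\mathcal N$ normalizes $K$.

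Once we have this $F$-balanced $c$, the definition gives $p^F([\dot\gamma])=[c^F]$, where for $i\in I^F$ we have $c_i=\dot\gamma_i$, hence $c^F_i=\dot v^i\dot\gamma_i(\dot v^i)^{-1}$. This element of $\mathcal N$ maps under $\phi^{-1}$ to $v^i\gamma_i(v^i)^{-1}=p^F_K(\gamma)_i$, so $c^F$ and $\dot{p^F_K(\gamma)}$ are two sequences of representatives of the same $W$-valued gallery $p^F_K(\gamma)\in\Gamma(s^F,v^F)$. By exactly the same induction as in the previous paragraph (now applied on $I^F$ with the product $\dot v^i\dot\gamma_i(\dot v^i)^{-1}\cdot\big(\dot{p^F_K(\gamma)}_i\big)^{-1}\in K$), these two sequences are $K(I^F)$-conjugate, so $[c^F]=\big[\dot{p^F_K(\gamma)}\big]$. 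Under the identification $\BS_c(s^F,v^F)^K=\Gamma(s^F,v^F)$ this reads $p^F([\dot\gamma])=p^F_K(\gamma)$, as desired.

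\paragraph{Expected obstacle.} The only non-bookkeeping point is the production of the connecting element of $K(I)$ (and of $K(I^F)$ at the end). The concern there is that the recursively defined $k_i$'s, which involve iterated conjugation of the starting adjustments by the $\dot\gamma_i$, remain in the compact torus $K$; this is handled by the normalization property $\dot wK\dot w^{-1}=K$ for $w\in W$ recorded in Section~\ref{Compact subgroups of complex algebraic groups}. Everything else is a direct calculation from the definitions.
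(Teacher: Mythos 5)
Your argument is correct; the paper itself supplies no proof, calling the result "obvious," so you have simply carried out the definitional unwinding: identifying $\BS_c(s,v)^K$ via the isomorphism $W\ito\mathcal N/K$, producing the $F$-balanced representative by multiplying $\dot\gamma_{f_2}$ by $k_f^{-1}\in K$, and matching $c^F$ against $\dot{p^F_K(\gamma)}$ using normality of $K$ in $\mathcal N$. The only cosmetic remark is that being $F$-balanced formally presupposes $c\in C(s,v)$, which in your write-up is confirmed a step later once $[c]=[\dot\gamma]$ is established (since $C(s,v)$ is a union of $K(I)$-orbits); reordering those two sentences removes the apparent forward reference.
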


Note that $\mu$ maps $\Gamma(s,v)\times_K E^\infty$ to the following subspace of $M$:
$$
M_K=(\Gamma(s^F,v^F)\times_K E^\infty)\times\prod_{m=1}^\n\Gamma(s_{f^m},v_{f^m})\times_K E^\infty.
$$
We denote by $\mu_K:\Gamma(s,v)\times_K E^\infty\to M_K$ the restriction of $\mu$.
Thus we get the map
$$
\mu_K^*:H^\bullet(M_K,\k)\cong H^\bullet_K(\Gamma(s^F,v^F),\k)\otimes_\k\bigotimes_{m=1}^\n{\vphantom{\bigotimes}}_\k H^\bullet_K(\Gamma(s_{f^m},v_{f^m}),\k)\to H^\bullet_K(\Gamma(s,v),\k)
$$
similarly to Section~\ref{The_main_result}. This map is automatically surjective as $\mu_K$ is an embedding
and all spaces are discrete.
We also make $H^\bullet_K(\Gamma(s^F,v^F),\k)$ into an $S$-$Q$-bimodule
by letting $S$ act on the left canonically and defining the right $Q$-action by
$$
h(c_1\otimes\cdots\otimes c_\n)=h\cup(\sigma^K_1)^*(c_1)\cup\cdots\cup(\sigma^K_\n)^*(c_\n),
$$
where $\sigma^K_m:\Gamma(s^F,v^F)\times_K E^\infty\to E^\infty/K$ is the restriction of $\sigma_m$.
Repeating the arguments of Section~\ref{The_main_result}, we get the following result.

\begin{lemma} The embedding $\mu_K$ induces the isomorphism of left $S$-modules
$$
\mu^\circ_K:H^\bullet_K(\Gamma(s^F,v^F),\k)\otimes_Q\bigotimes_{m=1}^\n{\vphantom{\bigotimes}}_\k H^\bullet_K(\Gamma(s_{f^m},v_{f^m}),\k)\ito H^\bullet_K(\Gamma(s,v),\k).
$$
If $(s,v)$ is of gallery type, then the following diagram is commutative:
\begin{equation}\label{eq:bt6:9}
\begin{tikzcd}
\displaystyle
H^\bullet_K(\BS_c(s^F,v^F),\k)\otimes_Q\bigotimes_{m=1}^\n{\vphantom{\bigotimes}}_\k H^\bullet_K(\BS_c(s_{f^m},v_{f^m}),\k)\arrow{d}\arrow{r}[swap]{\sim}{\mu^\circ}&H^\bullet_K(\BS_c(s,v),\k)\arrow{d}\\
\displaystyle
H^\bullet_K(\Gamma(s^F,v^F),\k)\otimes_Q\bigotimes_{m=1}^\n{\vphantom{\bigotimes}}_\k H^\bullet_K(\Gamma(s_{f^m},v_{f^m}),\k)\arrow{r}[swap]{\sim}{\mu_K^\circ}&H^\bullet_K(\Gamma(s,v),\k)
\end{tikzcd}
\end{equation}
where the vertical arrows are restriction and tensor product of restrictions.
\end{lemma}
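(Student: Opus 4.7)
I would mimic the argument of Section~\ref{The_main_result} verbatim, with the simplifications allowed by the fact that $\Gamma(s,v)$, $\Gamma(s^F,v^F)$ and $\Gamma(s_{f^m},v_{f^m})$ are finite discrete spaces with the trivial $K$-action. Each Borel construction then collapses to a disjoint union of copies of $E^\infty/K$ indexed by the finite set, so the K\"unneth formula~(\ref{eq:coh:12}) applies unconditionally and identifies $H^\bullet(M_K,\k)$ with the displayed tensor product over $\k$; moreover the restriction $\mu_K^*$ is, in each component, a coordinate projection between finitely many copies of $S$, which gives its surjectivity for free (no analogue of Lemma~\ref{lemma:13} is needed).

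Next I would identify the kernel of $\mu_K^*$ exactly as in Section~\ref{The_main_result}. The $K$-equivariant restrictions of the spaces in the square displayed before~(\ref{eq:bt6:11}) give the commutative square
$$
\begin{tikzcd}
\Gamma(s,v)\times_KE^\infty\arrow{r}{p_1\mu_K}\arrow{d}[swap]{p_{m+1}\mu_K}&\Gamma(s^F,v^F)\times_KE^\infty\arrow{d}{\sigma_m^K}\\
\Gamma(s_{f^m},v_{f^m})\times_KE^\infty\arrow{r}{\eta_m^K}&E^\infty/K
\end{tikzcd}
$$
from which the same chain of cup-product manipulations shows that $A\subset\ker\mu_K^*$, where $A$ is defined by the same differences. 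The same four-step calculation as in~(\ref{eq:bt6:11}) then shows $H^\bullet(M_K,\k)/A\cong S\otimes_\k H^\bullet(\Gamma(s^F,v^F),\k)\otimes_\k\bigotimes_m H^\bullet(\Gamma(s_{f^m},v_{f^m}),\k)$, which is a free $S$-module of rank $|\Gamma(s^F,v^F)|\cdot\prod_m|\Gamma(s_{f^m},v_{f^m})|$. To match this with the rank of $H^\bullet_K(\Gamma(s,v),\k)=S(\Gamma(s,v))$, I would use the obvious bijection $\Gamma(s,v)\ito\Gamma(s^F,v^F)\times\prod_m\Gamma(s_{f^m},v_{f^m})$ sending $\gamma$ to $(p^F_K(\gamma),\gamma|_{[f^1]},\ldots,\gamma|_{[f^\n]})$, with inverse reconstructing $\gamma_i=(\dot v^i)^{-1}p^F_K(\gamma)_i\dot v^i$ on $I^F$ and $\gamma|_{[f^m]}$ on each $[f^m]$; this is well-defined because the constraints in $\Gamma(s,v)$ split into constraints on the outer and inner galleries. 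The ranks then coincide, so the proposition $L\subset L'\Rightarrow L=L'$ yields $A=\ker\mu_K^*$. The $S$-linearity of $\mu_K^\circ$ is obtained by copying the diagram chase at the end of the proof of Theorem~\ref{theorem:bt7:3}, with $\eta$ replaced by its restriction to $\Gamma(s,v)\times_KE^\infty$.

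For the commutative square~(\ref{eq:bt6:9}), the point is pure naturality: $\mu_K$ is the restriction of $\mu$ to $K$-fixed points, $p^F$ restricts to $p^F_K$ by the preceding lemma, each $\sigma_m$ restricts to $\sigma_m^K$ and the canonical projection $\eta^F$ restricts to its analogue on $\Gamma(s^F,v^F)\times_KE^\infty$. Consequently the vertical restriction map is a homomorphism of $S$-$Q$-bimodules on the outer tensor factor and of left $Q$-modules on each inner factor, and it intertwines $\mu^*$ with $\mu_K^*$ in cohomology because pullback along the inclusion of fixed points commutes with every cup product and pullback that enters the construction of $\mu^\circ$ and $\mu_K^\circ$.

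\textbf{Main obstacle.} There is no real obstacle; the only point that requires care is the rank comparison needed to apply the proposition $L\subset L'\Rightarrow L=L'$, i.e.\ recording the bijection $\Gamma(s,v)\cong\Gamma(s^F,v^F)\times\prod_m\Gamma(s_{f^m},v_{f^m})$ explicitly so as to replace the use of~\cite[Theorem~7]{NFBBSV} that appears in the proof of Theorem~\ref{theorem:bt7:3}. Everything else is formal naturality, and no gallery-type hypothesis is needed for the isomorphism itself.
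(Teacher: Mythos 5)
Your proposal is correct and follows essentially the same approach as the paper, which gives no explicit proof and simply says to repeat the arguments of Section~\ref{The_main_result}. The combinatorial bijection $\Gamma(s,v)\cong\Gamma(s^F,v^F)\times\prod_m\Gamma(s_{f^m},v_{f^m})$ you substitute for the fibre-bundle/affine-paving step is exactly the one the paper itself invokes (in the special case $R^F=\emptyset$) in the proof of Corollary~\ref{corollary:1}, attributed to Section~3.4 of~\cite{NFBBSV}.
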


Let us compute the map $(\sigma_m^K)^*:S\to H_K^\bullet(\Gamma(s^F,v^F),\k)=S(\Gamma(s^F,v^F))$ exactly.
Note that the map $\sigma_m^K$ is actually a map
from the disjoint union $\Gamma(s^F,v^F)\times_K E^\infty\cong\Gamma(s^F,v^F)\times(E^\infty/K)$ of several copies
$E^\infty/K$ to $E^\infty/K$. For each $\gamma\in\Gamma(s^F,v^F)$, the restriction of this map
to $\{\gamma\}\times(E^\infty/K)$ equals
$\rho_{(\gamma^{f^m_1}v_{f^1}\cdots v_{f^{m-1}})^{-1}}/K$ in the notation of
Section~\ref{Equivariant_cohomology_of_a_point}. Hence by~(\ref{eq:pt:2}), we get
$$
(\sigma_m^K)^*(c)(\gamma)=\gamma^{f^m_1}v_{f^1}\cdots v_{f^{m-1}}c.
$$
From this formula, we get the right action of $Q$ on $H_K^\bullet(\Gamma(s^F,v^F),\k)$:
\begin{equation}\label{eq:bt6:16}
h(c_1\otimes\cdots\otimes c_\n)(\gamma)=h(\gamma)\prod_{m=1}^\n\gamma^{f^m_1}v_{f^1}\cdots v_{f^{m-1}}c_m.
\end{equation}

Finally, let us compute the map $\mu_K^\circ$. 
Let $\gamma$ be an arbitrary point of $\Gamma(s,v)$.
Let us denote $\delta=p^F_K(\gamma)$ for brevity.
We have the following commutative diagram
$$
\begin{tikzcd}
E^\infty/K\arrow{r}{j_\gamma}\arrow{rd}[swap]{j_\delta}&\Gamma(s,v)\times_K E^\infty\arrow{d}{p_1\mu_K}\\
&\Gamma(s^F,v^F)\times_K E^\infty
\end{tikzcd}
$$
Here $j_\gamma$ and $j_\delta$ are the maps defined in Section~\ref{Equivariant_cohomology_of_a_point}.
Moreover, for each $m=1,\ldots,\n$, we denote $\delta^m=\gamma_{f^m}$ and $w_m=\gamma^{f^m_1-1}$ for brevity.
We get the following commutative diagram:
$$
\begin{tikzcd}
E^\infty/K\arrow{r}{j_\gamma}\arrow{d}[swap]{\rho_{w_m^{-1}}/K}&\Gamma(s,v)\times_K E^\infty\arrow{d}{p_{m+1}\mu_K}\\
E^\infty/K\arrow{r}[swap]{j_{\delta^m}}&\Gamma(s_{f^m},v_{f^m})\times_K E^\infty
\end{tikzcd}
$$
Suppose that we have $h\in H_K^\bullet(\BS_c(s^F,v^F),\k)$ and $h_m\in H^\bullet_K(\Gamma(s_{f^m},v_{f^m}),\k)$
for $m=1,\ldots,\n$. Using the commutative diagrams above, we get
\begin{multline*}
\big(\mu_K^\circ(h\otimes h_1\otimes\cdots\otimes h_\n)\big)(\gamma)=j_\gamma^*\mu_K^*(p_1^*(h)\cup p_2^*(h_1)\cup\cdots\cup p_{\n+1}^*(h_\n))\\
=(p_1\mu_Kj_\gamma)^*(h)\cup(p_2\mu_Kj_\gamma)^*(h_1)\cup\cdots\cup(p_{\n+1}\mu_Kj_\gamma)^*(h_\n)\\
=j_\delta^*(h)\cup\big(j_{\delta^{(1)}}(\rho_{w_1^{-1}}/K)\big)^*(h_1)\cup\cdots\cup\(j_{\delta^{(\n)}}(\rho_{w_\n^{-1}}/K)\)^*(h_\n)\\
=h(\delta)\cup\big(\rho_{w_1^{-1}}/K\big)^*\big(h_1(\delta^{(1)})\big)\cup\cdots\cup\(\rho_{w_\n^{-1}}/K\)^*\big(h_\n(\delta^{(\n)})\big).
\end{multline*}
Applying~(\ref{eq:pt:2}) and omitting the sign of the cup product, we get
\begin{equation}\label{eq:bt6:12}
\mu_K^\circ(h\otimes h_1\otimes\cdots\otimes h_\n)(\gamma)=
h\big(p^F_K(\gamma)\big)\big(\gamma^{f^1_1-1}h_1(\gamma_{f^1})\big)\cdots\big(\gamma^{f^\n_1-1}h_\n(\gamma_{f^\n})\big).
\end{equation}

\subsection{Images, localization and good rings} We denote by $\X_c(s)$ the image of the restriction
$$H^\bullet_K(\BS_c(s),\k)\to H^\bullet_K(\Gamma(s),\k).$$
Similarly, let $\X_c(s,x)$ and $\X_c(s,v)$, where $x\in W$ and $v:R\to W$ is a map as in Section~\ref{Nested_fibre_bundles},
denote the images of the restrictions
$$
H^\bullet_K(\BS_c(s,x),\k)\to H^\bullet_K(\Gamma(s,x),\k),\quad\; H^\bullet_K(\BS_c(s,v),\k)\to H^\bullet_K(\Gamma(s,v),\k)
$$
respectively.

Let recall the homeomorphism $d_w:\BS_c(s)\ito\BS_c(s^w)$ from Section~\ref{Definition via the compact torus}.
We get the homeomorphism $d_w\times_K\rho_w:\BS_c(s)\times_K E^\infty\to\BS_c(s^w)\times_KE^\infty$.
Hence we get the isomorphism $d_w^{\im}:\X_c(s^w)\to\X(s)$ such that the diagram
$$
\begin{tikzcd}[column sep=15ex]
H^\bullet_K(\BS_c(s^w),\k)\arrow[two heads]{d}\arrow{r}{(d_w\times_K\rho_w)^*}&H^\bullet_K(\BS_c(s),\k)\arrow[two heads]{d}\\
\X_c(s^w)\arrow{r}{d_w^{\im}}&\X(s)
\end{tikzcd}
$$
is commutative.
Similarly, considering the spaces $\BS_c(s,x)$ and $\BS(s^w,wxw^{-1})$,
we get the isomorphism $d_{w,x}^{\im}:\X_c(s^w,wxw^{-1})\to\X(s,x)$.
Applying~(\ref{eq:pt:2}), the reader can easily check that
\begin{equation}\label{eq:bt7:5}
d_w^{\im}(f)(\lm)=w^{-1}f(\lm^\omega),\qquad d_{w,x}^{\im}(f)(\lm)=w^{-1}f(\lm^\omega).
\end{equation}
In a similar way, we can consider the homeomorphism $D_\gamma:\BS_c(s)\to\BS_c(s^{(\gamma)})$
defined in Section~\ref{Nested_fibre_bundles}. It is $K$-equivariant. Therefore, it induces the
isomorphisms $D_\gamma^{\im}:\X_c(s^{(\gamma)})\to\X_c(s)$ and $D_{\gamma,x}^{\im}:\X(s^{(\gamma)},x(\gamma^{\max})^{-1})\to\X(s,x)$.
Applying~(\ref{eq:pt:2}), we get
\begin{equation}\label{eq:bt7:6}
D_\gamma^{\im}(f)(\lm)=f(\gamma^{-1}\circ\lm),\qquad D_{\gamma,x}^{\im}(f)(\lm)=f(\gamma^{-1}\circ\lm),
\end{equation}
where we used the operations on the generalized combinatorial galleries defined in Section~\ref{Galleries}.

To obtain the tensor product theorem for images similar to Theorem~\ref{theorem:bt7:3},
we need to impose some restrictions on the ring of coefficients $\k$.

\begin{definition}\label{definition:2}
We say that a commutative ring $\k$ is good if it has finite global dimension, $2$ is invertible in $\k$ and
for any sequence of simple reflections $s$ and any $x\in W$, the restrictions
$$
H_T^\bullet(\BS(s),\k)\to H_K^\bullet(\Gamma(s),\k),\qquad H_T^\bullet(\BS(s,x),\k)\to H_K^\bullet(\Gamma(s,x),\k)
$$
are injective and the restriction
$
H_T^\bullet(\BS(s),\k)\to H_T^\bullet(\BS(s,x),\k)
$
is surjective.
\end{definition}
The results of~\cite{BTECBSV} imply that any principal ideal domain with invertible $2$
is a good ring.

Suppose that $(s,v)$ is of gallery type and $\k$ is a good ring. Then
we have the following commutative diagram:
\begin{equation}\label{eq:bt7:4}
\begin{tikzcd}
\displaystyle
H^\bullet_K(\BS_c(s^F,v^F),\k)\otimes_Q\bigotimes_{m=1}^\n{\vphantom{\bigotimes}}_\k H^\bullet_K(\BS_c(s_{f^m},v_{f^m}),\k)\arrow[two heads]{d}\arrow{r}[swap]{\sim}{\mu^\circ}&H^\bullet_K(\BS_c(s,v),\k)\arrow{d}{\wr}\\
\displaystyle
\X_c(s^F,v^F)\otimes_Q\bigotimes_{m=1}^\n{\vphantom{\bigotimes}}_\k\X_c(s_{f^m},v_{f^m})\arrow{d}\arrow{r}{\mu^{\im}_K}[swap]{\sim}&\X_c(s,v)\arrow[hook]{d}\\
\displaystyle
H^\bullet_K(\Gamma(s^F,v^F),\k)\otimes_Q\bigotimes_{m=1}^\n{\vphantom{\bigotimes}}_\k H^\bullet_K(\Gamma(s_{f^m},v_{f^m}),\k)\arrow{r}[swap]{\sim}{\mu_K^\circ}&H^\bullet_K(\Gamma(s,v),\k)
\end{tikzcd}
\end{equation}
This diagram can actually be obtained from Diagram~(\ref{eq:bt6:9}) by inserting the middle row.

\subsection{Surjectivity of restriction} We introduce the following parameter that we plan to use for induction.
Let $v:R\to W$ be a map, where $R$ is a nested structure on a finite totaly ordered set $I$
(see Section~\ref{Nested_structures}). Its {\it index} $\ind_Iv$ is a pair $(|R|,n)$, where $n=0$
if
$R$ is closed
and $n=1$ otherwise. These indices are compared lexicographically:
$(x,n)<(x',n')$ if and only if $x<x'$ or $x=x'$ and $n<n'$.

\begin{theorem}\label{theorem:2}
Suppose that $\k$ is a
good ring.
Then for any pair $(s,v)$ of gallery type the restriction
$H_K^\bullet(\BS_c(s),\k)\to H_K^\bullet(\BS_c(s,v),\k)$ is surjective.
\end{theorem}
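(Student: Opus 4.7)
My plan is to proceed by induction on $\ind_Iv$ under the lexicographic order. The base case $\ind_Iv=(0,1)$ gives $R=\emptyset$, so $\BS_c(s,v)=\BS_c(s)$ and the restriction is the identity. For the inductive step, I first dispatch the ``atomic'' case $R=\{\span I\}$: here $s=s^{(\span I,v)}$ is of gallery type, so a gallerification $(y,t,\gamma)$ with $t\colon I\to\mathcal S(W)$ exists. Combining the homeomorphisms $d_y$, $D_\gamma$ of Section~\ref{Definition via the compact torus} with~\eqref{eq:6}--\eqref{eq:7} and the Iwasawa comparison of Section~\ref{Isomorphism_of_two_constructions}, the restriction is identified with $H^\bullet_T(\BS(t),\k)\to H^\bullet_T(\BS(t,w),\k)$ for some $w\in W$, which is surjective by the good-ring hypothesis.

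For the general inductive step I choose a non-empty $F\subset R$ of pairwise disjoint intervals designed to strictly reduce $\ind$: if $R$ is closed and $|R|>1$, take $F$ to be the maximal elements of $R\setminus\{\span I\}$, so that $R^F=\{\span I\}$ and $\ind_{I^F}v^F=(1,0)$; if $R$ is not closed, take $F$ to be the maximal elements of $R$, so that $R^F=\emptyset$ and $\ind_{I^F}v^F=(0,1)$. A short verification using the non-overlap axiom and the fact that $f^m=\span[f^m]\in R_{f^m}$ (so each $R_{f^m}$ is closed with $|R_{f^m}|\leq|R|$) yields $\ind_{I^F}v^F<\ind_Iv$ and $\ind_{[f^m]}v_{f^m}<\ind_Iv$. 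By Proposition~\ref{proposition:bt10:1} the pairs $(s^F,v^F)$ and $(s_{f^m},v_{f^m})$ are of gallery type, so the inductive hypothesis provides surjective restrictions $H^\bullet_K(\BS_c(s^F),\k)\twoheadrightarrow H^\bullet_K(\BS_c(s^F,v^F),\k)$ and $H^\bullet_K(\BS_c(s_{f^m}),\k)\twoheadrightarrow H^\bullet_K(\BS_c(s_{f^m},v_{f^m}),\k)$ for every $m$.

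The combining step exploits Theorem~\ref{theorem:bt7:3}, which identifies $H^\bullet_K(\BS_c(s,v),\k)$ with the $Q$-tensor product of the simpler pieces via $\mu^\circ$. The key geometric observation is that the projections $q^{m,\infty}$ of~\eqref{eq:bt6:15} are in fact well-defined on all of $\BS_c(s)\times_KE^\infty$, not merely on the $F$-balanced subspace $\BS_c(s,v)\times_KE^\infty$: the telescoping identity in Part~1 of the proof of Lemma~\ref{lemma:10} shows that the $K(I)$-ambiguity of any representative is absorbed into the Stiefel coordinate. Consequently there is a commutative square between $\iota$ and each $q^{m,\infty}$, and since $\iota^*$ is a ring homomorphism, every class of the form $\mu^\circ(1\otimes h_1\otimes\cdots\otimes h_\n)=(q^1)^*h_1\cup\cdots\cup(q^\n)^*h_\n$ lies in $\im\iota^*$ (each $h_m$ lifts inductively to some $\tilde h_m\in H^\bullet_K(\BS_c(s_{f^m}),\k)$). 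For the remaining ``first-factor'' generators $\mu^\circ(h\otimes 1\otimes\cdots\otimes 1)=(p^F)^*h$ with $h\in H^\bullet_K(\BS_c(s^F,v^F),\k)$, my plan is to argue that $H^\bullet_K(\BS_c(s^F,v^F),\k)$ is generated as a graded ring over $S$ by first equivariant Chern classes of line bundles indexed by $i\in I^F$, and to identify these classes---after pulling back by $p^F$---with $\iota^*$-restrictions of appropriate twisted Chern classes on $\BS_c(s)$ using the twisted $S$-actions $\Sigma(s,\cdot,\cdot)^*$ of Section~\ref{Twisted action of S}. The Weyl-group conjugation $v^i$ appearing in $s^F_i=v^is_i(v^i)^{-1}$ is then compensated for by the appropriate twist, as encoded in~\eqref{eq:bt6:16}.

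The main obstacle is precisely this last identification: there is no natural map $\BS_c(s)\to\BS_c(s^F)$ extending $p^F$, so the comparison between $(p^F)^*$-pullbacks and $\iota^*$-restrictions must be performed at the cohomological level. Getting the bookkeeping right---tracking how the products $v^i=v_{f^1}\cdots v_{f^m}$ interact with the twisted $S$-structure at each position $i\in I^F$---is the technical heart of the argument and is precisely where the machinery of Section~\ref{Twisted action of S} becomes unavoidable.
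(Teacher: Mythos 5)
Your induction schema, choice of $F$ in the closed versus non-closed cases, the reduction of the atomic case $R=\{\span I\}$ via gallerification plus the good-ring hypothesis, and the lifting of the factors $\mu^\circ(1\otimes h_1\otimes\cdots\otimes h_\n)$ by extending $q^{m,\infty}$ to $\tilde q^{m,\infty}$ on $\BS_c(s)\times_KE^\infty$ all match the paper's proof. The gap is exactly where you flag it: the lifting of the first-factor generators $\mu^\circ(h\otimes 1\otimes\cdots\otimes 1)=(p^F\times_K\id)^*h$ is not carried out, only planned. The paper closes this gap by two different arguments depending on whether $R$ is closed.

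In the non-closed case, the paper invokes~\cite[Lemma~25]{NFBBSV} to reduce to $h=\Sigma(s^F,i,1)^*(a)$ for $i\in I^F\cup\{-\infty\}$, $a\in S$, and then proves the \emph{exact identity}
$\mu^*p_1^*\Sigma(s^F,i,1)^*(a)=\iota^*\Sigma(s,i,(v^i)^{-1})^*(a)$,
which exhibits a concrete lift in $H^\bullet_K(\BS_c(s),\k)$. This is not merely bookkeeping: it is the crucial point where the Weyl-conjugation $v^i$ built into $s^F$ is compensated for by the twist parameter in $\Sigma(s,-,-)$. Your Chern-class phrasing is morally the same idea (the $\Sigma^*(a)$'s are equivariant Chern classes under~(\ref{eq:pt:1.5})), but stating ``my plan is to argue'' and ``the technical heart of the argument'' does not constitute proof.

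More importantly, in the closed case with $|R|>1$ your plan does not apply at all as stated: here $H^\bullet_K(\BS_c(s^F,v^F),\k)=H^\bullet_K(\BS_c(s^F,w),\k)$ with $w=v_{\span I}$, so the first factor is itself a nontrivial quotient and its generators need to be lifted in two stages. The paper introduces the auxiliary map $\tilde v=v|_{R\setminus\{\span I\}}$, observes $\ind_I\tilde v<\ind_Iv$, lifts $h$ first to $H^\bullet_K(\BS_c(s^F),\k)$ using the already-established atomic case, pulls back along $\tilde p^F:\BS_c(s,\tilde v)\to\BS_c(s^F)$, and finally lifts to $\BS_c(s)$ using the inductive hypothesis on $\tilde v$. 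Your proposal never mentions $\tilde v$ or this two-step detour, so the closed case is genuinely missing even at the level of strategy.
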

\begin{proof} We denote by $\iota$ the natural embedding $\BS_c(s,v)\hookrightarrow \BS_c(s)$.
So we have to prove the surjectivity of $\iota^*:H_K^\bullet(\BS_c(s),\k)\to H_K^\bullet(\BS_c(s,v),\k)$.

We will prove the lemma by induction on $\ind_Iv$.
It is easy to see that two minimal possible values are $(0,1)$ and $(1,0)$.
In the first case, the restriction is an identical map. So we consider the second case.
We get $R=\{\span I\}$ and $s$ is itself of gallery type.
We have $\BS_c(s,v)=\BS_c(s,w)$, where $w=v_{\span R}$. Let $(x,t,\gamma)$ be a gallerification of $s$
(see Section~\ref{Galleries}). We get the following commutative diagram
$$
\begin{tikzcd}
\BS_c(s,w)\arrow[hook]{r}\arrow{d}{\wr}[swap]{\phi_w}&\BS_c(s)\arrow{d}[swap]{\wr}{\phi}\\
\BS(t,xwx^{-1}\gamma^{\max})\arrow[hook]{r}&\BS(t)
\end{tikzcd}
$$
(see the calculations from the proof of~\cite[Lemma~9]{NFBBSV}).
The isomorphisms $\phi$ and $\phi_w$ have the property $\phi(ka)=\dot x k\dot x^{-1}\phi(a)$
and $\phi_w(ka)=\dot x k\dot x^{-1}\phi_w(a)$ for $k\in K$ and $a\in\BS_c(s)$ or $a\in\BS_c(s,w)$
respectively. Multiplying by $E^\infty$ and taking taking quotients, we get the commutative diagram
$$
\begin{tikzcd}
\BS_c(s,w)\times_K E^\infty\arrow[hook]{r}\arrow{d}{\wr}[swap]{\phi_w\times_K\rho_x}&\BS_c(s)\times_KE^\infty\arrow{d}[swap]{\wr}{\phi\times_K\rho_x}\\
\BS(t,xwx^{-1}\gamma^{\max})\times_KE^\infty\arrow[hook]{r}&\BS(t)\times_KE^\infty
\end{tikzcd}
$$
Taking cohomologies, we get the commutative diagram
$$
\begin{tikzcd}
H_K(\BS_c(s,w),\k)&\arrow{l}H_K(\BS_c(s),\k)\\
H_T^\bullet(\BS(t,xwx^{-1}\gamma^{\max}),\k)\arrow{u}{\wr}&\arrow{l}H_T^\bullet(\BS(t),\k)\arrow{u}[swap]{\wr}
\end{tikzcd}
$$
The bottom arrow is surjective by the surjectivity condition. Hence the upper one is also surjective.

Now suppose that $\ind_I v$ is neither $(0,1)$ nor $(1,0)$. Let us consider the case
where the second component of $\ind_I v$ equals $1$, that is, $\span I\notin R$.
Let $F$ be the set of all maximal elements of $R$. Our assumption implies that $F\ne\emptyset$.
By Lemma~\ref{lemma:13}\ref{lemma:13:p:3} and the K\"unneth formula, we get that $H_K^\bullet(\BS_c(s,v),\k)$
is generated by elements $\mu^*p_1^*(h)$ and $\mu^*p_{m+1}^*(h_m)$,
where $h\in H^\bullet_K(\BS_c(s^F),\k)$ and $h_m\in H^\bullet_K(\BS_c(s_{f^m},v_{f^m}),\k)$.
We need to prove that all these elements can be lifted to some elements of $H^\bullet_K(\BS_c(s),\k)$.

First consider the element $\mu^*p_1^*(h)$. By~\cite[Lemma~25]{NFBBSV}, it suffices
to consider the case $h=\Sigma(s^F,i,1)^*(a)$ for some $i\in I^F\cup\{-\infty\}$ and $a\in S$.
We get
\begin{multline*}
\mu^*p_1^*(h)=(p_1\mu)^*(h)=(p^F\times_K\id)^*(h)\\
=(\Sigma(s^F,i,1)(p^F\times_K\id))^*(a)=(\Sigma(s,i,(v^i)^{-1})\iota)^*(a)
=\iota^*\Sigma(s,i,(v^i)^{-1})^*(a).
\end{multline*}
Hence $\Sigma(s,i,(v^i)^{-1})^*(a)$ delivers the required lifting of $\mu^*p_1^*(h)$.

Now let us lift elements $\mu^*p_{m+1}^*(h_m)=(p_{m+1}\mu)^*(h_m)=(q^{m,\infty})^*(h)$. To do it notice that we can extend the
map $q^{m,\infty}:\BS_c(s,v)\times_K E^\infty\to\BS_c(s_{f^m},v_{f^m})\times_K E^\infty$
defined in Section~\ref{Embeddings_of_the_Borel_constructions} to the map
$\tilde q^{m,\infty}:\BS_c(s)\times_K E^\infty\to\BS_c(s_{f^m})\times_K E^\infty$
given by~(\ref{eq:bt6:15}) as well as $q^{m,\infty}$. We get the commutative diagram
$$
\begin{tikzcd}
\BS_c(s)\times_K E^\infty\arrow{r}{\tilde q^{m,\infty}}&\BS_c(s_{f^m})\times_K E^\infty\\
\BS_c(s,v)\times_K E^\infty\arrow{r}{q^{m,\infty}}\arrow[hook]{u}{\iota\times\id}&\BS_c(s_{f^m},v_{f^m})\times_K E^\infty\arrow[hook]{u}
\end{tikzcd}
$$
Taking cohomologies, we get the commutative diagram
$$
\begin{tikzcd}
H^\bullet_K(\BS_c(s),\k)\arrow{d}[swap]{\iota^*}&\arrow{l}[swap]{(\tilde q^{m,\infty})^*}H^\bullet_K(\BS_c(s_{f^m}),\k)\arrow{d}\\
H^\bullet_K(\BS_c(s,v),\k)&\arrow{l}[swap]{(q^{m,\infty})^*}H^\bullet_K(\BS_c(s_{f^m},v_{f^m}),\k)
\end{tikzcd}
$$
Noting that $\ind_{[f^m]}v_{f^m}<\ind_Iv$, we get by the inductive hypothesis that there exists some
$\tilde h\in H^\bullet_K(\BS_c(s_{f^m}),\k)$ whose restriction to $\BS_c(s_{f^m},v_{f^m})$
is $h$. From the above diagram, we get $(q^{m,\infty})^*(h)=\iota^*(\tilde q^{m,\infty})^*(\tilde h)$.
Hence $(\tilde q^{m,\infty})^*(\tilde h)$ delivers the required lifting.

Now let us consider the case where the second component of $\ind_I v$ equals $0$,
that is, $\span I\in R$.
Let $F$ be the set of all maximal elements of the set $\widetilde R=R\setminus\{\span I\}$. As $\ind_I v\ne(1,0)$, we get that
$F\ne\emptyset$. The arguments of the previous case work now with the only exception: we need to show that
$\mu^*p_1^*(h)$ can be lifted to an element of $H^\bullet_K(\BS_c(s),\k)$
for any $h\in H^\bullet_K(\BS_c(s^F,v^F),\k)$. We denote $w=v_{\span I}$. Then we have $\BS_c(s^F,v^F)=\BS_c(s^F,w)$.
Consider
the restriction $\tilde v=v|_{\widetilde R}$.
Note that $\ind_I\tilde v<\ind_Iv$.
Let $\tilde p^F:\BS_c(s,\tilde v)\to\BS_c(s^F)$ be
the projection along $F$. We get the following commutative diagram:
$$
\begin{tikzcd}
{}                               &\arrow[hook]{ld}\BS_c(s,\tilde v)\arrow{r}{\tilde p^F}&\BS_c(s^F)\\
\BS_c(s)    &  & \\
                                &\arrow[hook]{lu}{\iota}\BS_c(s,v)\arrow[hook]{uu}\arrow{r}{p^F}&\BS_c(s^F,w)\arrow[hook]{uu}
\end{tikzcd}
$$
For cohomologies, we get the commutative diagram:
$$
\begin{tikzcd}
{}                               &H^\bullet_K(\BS_c(s,\tilde v),\k)\arrow{dd}&\arrow{l}[swap]{(\tilde p^F)^*}H^\bullet_K(\BS_c(s^F),\k)\arrow{dd}\\
H^\bullet_K(\BS_c(s),\k)\arrow{ru}\arrow{rd}[swap]{\iota^*}    &  & \\
                                &H^\bullet_K(\BS_c(s,v),\k)&\arrow{l}[swap]{(p^F)^*}H^\bullet_K(\BS_c(s^F,w),\k)
\end{tikzcd}
$$
The right vertical arrow corresponds to the special case of this theorem, where the index of $v$ is $(1,0)$,
which already was considered. Hence it is surjective and we can lift $h$ to an element $h'\in H^\bullet_K(\BS_c(s^F),\k)$.
By the inductive hypothesis, $(\tilde p^F)^*(h')$ can be lifted to an element $h''\in H^\bullet_K(\BS_c(s),\k)$.
The above diagram proves that $\iota^*(h'')=(p^F)^*(h)$.
\end{proof}

\subsection{Returning to the big torus} Even though the above constructions of $K$-invariant subspaces
do not have $T$-invariant counterparts, we can profit from them to obtain results for $T$-equivariant
cohomologies.

Suppose that $s$ is a sequence of simple reflections. We denote by $\X(s)$, $\X(s,x)$ and $\X(s,v)$ the images
of the restrictions $H_T^\bullet(\BS(s),\k)\to H_T^\bullet(\Gamma(s),\k)$, $H_T^\bullet(\BS(s),\k)\to H_T^\bullet(\Gamma(s,x),\k)$
and $H_T^\bullet(\BS(s),\k)\to H_T^\bullet(\Gamma(s,v),\k)$ respectively.
Our identification of $T$- and $K$-equivariant cohomologies and isomorphism of Sections~\ref{Isomorphism_of_two_constructions}
prove that $\X(s)=\X_c(s)$. If $\k$ is a good ring, then and $\X(s,x)=\X_c(s,x)$ and
$\X(s,v)=\X_c(s,v)$ by~Theorem~\ref{theorem:2}.

\begin{theorem}\label{theorem:bt7:1}
Let $s$ be a sequence of simple reflections on a finite totaly ordered set $I$,
$R$ be a nested structure on $I$ and $v:R\to W$ be an arbitrary function.
Let $F=\{f^1<\cdots<f^\n\}$ be the set of all maximal pairs of $R$. Suppose that $s^F$ and $(s,v)$
are of gallery type and that $\k$ is a good ring. Let $(t,\gamma,x)$ be a gallerification of $s^F$.
We consider $\X(t)$ as an $S$-$Q$-bimodule with the following actions (denoted by $\cdot$):
\begin{equation}\label{eq:bt7:2}
(c\cdot h)(\lambda)=(xc)h(\lambda),\quad
(h\cdot(c_1\otimes\cdots\otimes c_\n))(\lm)=h(\lm)\prod_{m=1}^\n\lm^{f^{m}_1}(\gamma^{f^{m}_1})^{-1}xv_{f^1}\cdots v_{f^{m-1}}c_m.
\end{equation}
Then there is the isomorphism of left $S$-modules
$$
\mu_T^{\im}:\X(t)\otimes_Q\bigotimes_{m=1}^\n\!{}_k\,\X(s_{f^m},v_{f^m})\ito\X(s,v)
$$
given by
\begin{equation}\label{eq:bt7:3}
\mu_T^{\im}(h\otimes h_1\otimes\cdots\otimes h_\n)(\lm)=\Big(x^{-1}h\big(\gamma\circ p^F_K(\lm)^x\big)\Big)\prod_{m=1}^\n\lm^{f^m_1-1}h_m(\lm_{f^m}).
\end{equation}
\end{theorem}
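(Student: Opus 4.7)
The plan is to deduce the theorem from the $K$-equivariant tensor product isomorphism of Theorem~\ref{theorem:bt7:3} by transporting the ``base'' factor along the given gallerification $(t,\gamma,x)$ of $s^F$.

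First I observe that since $F$ consists of the maximal pairs of $R$, every $r\in R$ is contained in some $f\in F$, so $R^F=\emptyset$ and $v^F$ is the empty map. Consequently $\BS_c(s^F,v^F)=\BS_c(s^F)$ and $\Gamma(s^F,v^F)=\Gamma(s^F)$. Diagram~(\ref{eq:bt7:4}) applied to $(s,v)$ (of gallery type by hypothesis) then yields an isomorphism of left $S$-modules
$$
\mu_K^{\im}:\X_c(s^F)\otimes_Q\bigotimes_{m=1}^{\n}{}_\k\,\X_c(s_{f^m},v_{f^m})\ito\X_c(s,v).
$$
Since each $s_{f^m}$ is a sequence of simple reflections and each $(s_{f^m},v_{f^m})$ is of gallery type by Proposition~\ref{proposition:bt10:1}, the identifications of Section~\ref{Isomorphism_of_two_constructions} combined with Theorem~\ref{theorem:2} and the good-ring hypothesis give $\X_c(s,v)=\X(s,v)$ and $\X_c(s_{f^m},v_{f^m})=\X(s_{f^m},v_{f^m})$.

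Next I invoke the gallerification identity $t^{(\gamma)}=(s^F)^x$. The $K$-equivariant homeomorphisms $D_\gamma$ and $d_x$ of Section~\ref{Definition via the compact torus} induce isomorphisms $(D_\gamma)^{\im}:\X_c(t^{(\gamma)})\ito\X(t)$ and $(d_x)^{\im}:\X_c((s^F)^x)\ito\X_c(s^F)$. Composing these, I obtain an isomorphism $\phi:=(d_x)^{\im}\circ((D_\gamma)^{\im})^{-1}:\X(t)\ito\X_c(s^F)$, which by formulas~(\ref{eq:bt7:5}) and~(\ref{eq:bt7:6}) takes the explicit shape
$$
\phi(h)(\lm)=x^{-1}h(\gamma\circ\lm^x),\qquad \lm\in\Gamma(s^F).
$$

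Then I transport through $\phi$ the $S$-$Q$-bimodule structure on $\X_c(s^F)\subset H^\bullet_K(\Gamma(s^F),\k)$ — the canonical left $S$-action and the right $Q$-action given by~(\ref{eq:bt6:16}). Writing $\mu=\gamma\circ\lm^x\in\Gamma(t)$ and using the identity $\mu^i=x\lm^i x^{-1}\gamma^i$ that follows from the definition of $\circ$ in Section~\ref{Galleries}, a direct calculation shows that the pulled-back actions agree with those in~(\ref{eq:bt7:2}). With this bimodule structure on $\X(t)$ the composition $\mu_T^{\im}:=\mu_K^{\im}\circ(\phi\otimes\id)$ is an isomorphism of left $S$-modules, and substituting $\phi(h)$ into the explicit formula~(\ref{eq:bt6:12}) for $\mu_K^\circ$ (which restricts to $\mu_K^{\im}$ on the image submodule) delivers~(\ref{eq:bt7:3}) immediately.

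The main obstacle I anticipate is the verification of the right $Q$-action formula: one must convert the twist $\lm^{f^m_1}v_{f^1}\cdots v_{f^{m-1}}$ appearing in~(\ref{eq:bt6:16}) after substitution of $\phi(h)$ into the twist $\lm^{f^m_1}(\gamma^{f^m_1})^{-1}xv_{f^1}\cdots v_{f^{m-1}}$ displayed in~(\ref{eq:bt7:2}). This conversion is driven by the single identity $x\lm^i=\mu^i(\gamma^i)^{-1}x$ applied at the indices $i=f^m_1$; once this bookkeeping is settled, both the left $S$-linearity of $\mu_T^{\im}$ and the explicit image formula follow essentially by inspection from the established formulas for $\mu_K^\circ$, $D_\gamma^{\im}$, and $d_x^{\im}$.
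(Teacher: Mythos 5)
Your argument is essentially the paper's proof, modulo the direction in which you set up the transport map: you take $\phi=d_x^{\im}\circ(D_\gamma^{\im})^{-1}:\X(t)\to\X_c(s^F)$, while the paper uses the inverse composition $D_\gamma^{\im}d^{\im}_{x^{-1}}:\X_c(s^F)\to\X(t)$ and gets the same explicit formula for what is your $\phi$ as its $\phi^{-1}$. The key steps — observing $R^F=\emptyset$ so the base factor is $\X_c(s^F)$, invoking the middle row of diagram~(\ref{eq:bt7:4}) and the identification $\X_c(\cdot)=\X(\cdot)$ over a good ring, transporting the $S$-$Q$-bimodule structure along the gallerification via the identity $(\gamma\circ\lm^x)^i=x\lm^i x^{-1}\gamma^i$, and plugging into~(\ref{eq:bt6:12}) — match the paper's, so your proposal is correct.
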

\begin{proof} The middle line of Diagram~(\ref{eq:bt7:4}) can be written as
$$
\mu^{\im}_K:\X_c(s^F)\otimes_Q\bigotimes_{m=1}^\n\!{}_k\,\X(s_{f^m},v_{f^m})\ito\X(s,v).
$$
It remains to find an isomorphism for the $S$-$Q$-bimodule $\X_c(s^F)$. We have $s^F=(t^{(\gamma)})^{x^{-1}}$.
So we have the following isomorphisms:
$$
\begin{tikzcd}
\X_c(s^F)\arrow{r}{d^{\im}_{x^{-1}}}[swap]{\sim}&\X_c(t^{(\gamma)})\arrow{r}{D_\gamma^{\im}}[swap]{\sim}&\X_c(t)=\X(t).
\end{tikzcd}
$$
Let $\phi=D_\gamma^{\im}d^{\im}_{x^{-1}}$ denote their composition. Applying direct formulas~(\ref{eq:bt7:5}) and~(\ref{eq:bt7:6}),
we get
$$
\phi(h)(\lm)=\Big(D_\gamma^{\im}d^{\im}_{x^{-1}}(h)\Big)(\lm)=d^{\im}_{x^{-1}}(h)(\gamma^{-1}\circ\lm)
=xh\Big((\gamma^{-1}\circ\lm)^{x^{-1}}\Big).
$$
We can easily invert $\phi$ as follows: $\phi^{-1}(h)(\lm)=x^{-1}h(\gamma\circ\lm^x)$.
For any $c\in S$ and $h\in\X(t)$,
we get
\begin{multline*}
c\cdot h(\lambda)=\phi(c\phi^{-1}(h))(\lambda)=x(c\phi^{-1}(h))\Big((\gamma^{-1}\circ\lm)^{x^{-1}}\Big)\\
=(xc)(x\phi^{-1}(h))\Big((\gamma^{-1}\circ\lm)^{x^{-1}}\Big)=(xc)h(\lambda).
\end{multline*}
On the other hand, for any $c_1,\ldots,c_\n\in S$, applying~(\ref{eq:bt6:16}), we get
\begin{multline*}
(h\cdot(c_1\otimes\cdots\otimes c_\n))(\lm)=\phi(\phi^{-1}(h)(c_1\otimes\cdots\otimes c_\n))(\lm)
=x(\phi^{-1}(h)(c_1\otimes\cdots\otimes c_\n))\Big((\gamma^{-1}\circ\lm)^{x^{-1}}\Big)\\
=x\(\phi^{-1}(h)\Big((\gamma^{-1}\circ\lm)^{x^{-1}}\Big)\prod_{m=1}^\n\Big((\gamma^{-1}\circ\lm)^{x^{-1}}\Big)^{f^m_1}v_{f^1}\cdots v_{f^{m-1}}c_m\)\\
=h(\lm)\prod_{m=1}^\n x\Big((\gamma^{-1}\circ\lm)^{x^{-1}}\Big)^{f^m_1}v_{f^1}\cdots v_{f^{m-1}}c_m\\
=h(\lm)\prod_{m=1}^\n\Big(\gamma^{-1}\circ\lm\Big)^{f^m_1}xv_{f^1}\cdots v_{f^{m-1}}c_m\\
=h(\lm)\prod_{m=1}^\n\lm^{f^m_1}(\gamma^{f^m_1})^{-1}xv_{f^1}\cdots v_{f^{m-1}}c_m.
\end{multline*}
From~(\ref{eq:bt6:12}), we get the exact formula
\begin{multline*}
\mu^{\im}_T(h\otimes h_1\otimes\cdots\otimes h_\n)(\lm)=
\mu_K^\circ(\phi^{-1}(h)\otimes h_1\otimes\cdots\otimes h_\n)(\lm)\\
=\phi^{-1}(h)\big(p^F_K(\lm)\big)\big(\lm^{f^1_1-1}h_1(\lm_{f^1})\big)\cdots\big(\lm^{f^\n_1-1}h_\n(\lm_{f^\n})\big)\\
=x^{-1}h\Big(\gamma\circ p^F_K(\lm)^x\Big)\big(\lm^{f^1_1-1}h_1(\lm_{f^1})\big)\cdots\big(\lm^{f^\n_1-1}h_\n(\lm_{f^\n})\big).
\end{multline*}
\end{proof}

\begin{theorem}\label{theorem:bt7:2}
Let $s$ be a sequence of simple reflections on a finite totaly ordered set $I$,
$R$ be a closed nested structure on $I$ and $v:R\to W$ be an arbitrary function.
Let $F=\{f^1<\cdots<f^\n\}$ be the set of all maximal pairs of $R\setminus\{\span I\}$. Suppose that $(s,v)$
is of gallery type and that $\k$ is a good ring. Let $(t,\gamma,x)$ be a gallerification of $s^{(\span I,v)}$
and $w=v_{span I}$.
Then there is the isomorphism of left $S$-modules
$$
\mu_T^{\im}:\X(t,xw(xv_{f^1}\cdots v_{f^\n})^{-1}\gamma^{\max})\otimes_Q\bigotimes_{m=1}^\n{}_k\X(s_{f^m},v_{f^m})\ito\X(s,v).
$$
The structure of an $S$-$Q$-module on the first factor is given by~(\ref{eq:bt7:2})
and the isomorphism is given by~(\ref{eq:bt7:3}).
\end{theorem}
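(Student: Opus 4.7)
The plan is to follow the strategy of Theorem~\ref{theorem:bt7:1} verbatim, tracking the additional top-level constraint $\gamma^{\max}=w$ through the same sequence of identifications. First I would take $F=\{f^1,\ldots,f^\n\}$ as in the statement. Since $R$ is closed and every element of $R\setminus\{\span I\}$ is contained in some $f^m$, one has $R^F=\{\span I\}$, and unwinding the definition of $v^F$ from Section~\ref{Nested_structures} yields $v^F_{\span I}=w(v_{f^1}\cdots v_{f^\n})^{-1}$. Moreover, the index set $I(\span I,R)$ and the sequence $s^{(\span I,v)}$ introduced in Section~\ref{Affine_pavings} coincide with $I^F$ and $s^F$ respectively, so the triple $(t,\gamma,x)$ is simultaneously a gallerification of $s^F$.

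Next, I would invoke the middle row of Diagram~(\ref{eq:bt7:4}), which applies because $(s,v)$ is of gallery type and $\k$ is good. This yields an isomorphism of left $S$-modules
$$
\mu_K^{\im}:\X_c\bigl(s^F,w(v_{f^1}\cdots v_{f^\n})^{-1}\bigr)\otimes_Q\bigotimes_{m=1}^\n{}_\k\,\X(s_{f^m},v_{f^m})\ito\X(s,v),
$$
with the canonical left $S$-action on the first factor and the right $Q$-action inherited from~(\ref{eq:bt6:16}).

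To finish, I would identify $\X_c(s^F,w(v_{f^1}\cdots v_{f^\n})^{-1})$ with $\X(t,xw(xv_{f^1}\cdots v_{f^\n})^{-1}\gamma^{\max})$ via the composition
$$
\phi:=D^{\im}_{\gamma,\,xw(xv_{f^1}\cdots v_{f^\n})^{-1}\gamma^{\max}}\circ d^{\im}_{x^{-1},\,xw(v_{f^1}\cdots v_{f^\n})^{-1}x^{-1}}.
$$
Since $(t,\gamma,x)$ gallerifies $s^F$, one has $s^F=(t^{(\gamma)})^{x^{-1}}$, so~(\ref{eq:6}) applied with $w=x^{-1}$ gives the intermediate constraint $xw(v_{f^1}\cdots v_{f^\n})^{-1}x^{-1}$ on $t^{(\gamma)}$, and then~(\ref{eq:7}) converts this into $xw(v_{f^1}\cdots v_{f^\n})^{-1}x^{-1}\gamma^{\max}=xw(xv_{f^1}\cdots v_{f^\n})^{-1}\gamma^{\max}$ on $t$. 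Formulas~(\ref{eq:bt7:5}) and~(\ref{eq:bt7:6}) combine to give $\phi^{-1}(h)(\delta)=x^{-1}h(\gamma\circ\delta^x)$, exactly the substitution used in the proof of Theorem~\ref{theorem:bt7:1}.

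Plugging $\phi^{-1}(h)$ into~(\ref{eq:bt6:12}) delivers formula~(\ref{eq:bt7:3}) at once, and transporting the right $Q$-action of~(\ref{eq:bt6:16}) along $\phi$ yields the $Q$-action half of~(\ref{eq:bt7:2}) after a short calculation that uses the identity $(\gamma^{-1}\circ\lm)^i=\lm^i(\gamma^i)^{-1}$ from Section~\ref{Galleries} and the fact that $W$ acts on $S$ by ring automorphisms; the twisted left $S$-action $(c\cdot h)(\lm)=(xc)h(\lm)$ appears because an inner $x$ and an outer $x^{-1}$ cancel against the conjugation of values. Every step beyond the initial identification $v^F_{\span I}=w(v_{f^1}\cdots v_{f^\n})^{-1}$ is verbatim the computation at the end of the proof of Theorem~\ref{theorem:bt7:1}, so the only real obstacle is the correct reading of the gallerification in the closed case; the present theorem is essentially the observation that a closed $R$ with $\span I\in R$ merely inserts one extra constraint at the top of the $s^F$-level without disturbing the tensor-product argument.
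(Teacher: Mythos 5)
Your proposal is correct and follows the same route as the paper's proof: apply the middle row of Diagram~(\ref{eq:bt7:4}), note that $s^F=s^{(\span I,v)}$ with $R^F=\{\span I\}$ and $v^F_{\span I}=w(v_{f^1}\cdots v_{f^\n})^{-1}$, and transport the result via $\phi=D_\gamma^{\im}\circ d_{x^{-1}}^{\im}$ exactly as in Theorem~\ref{theorem:bt7:1}. The paper simply abbreviates this last step as ``we just repeat the proof of Theorem~\ref{theorem:bt7:1},'' which is precisely the computation you spell out.
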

\begin{proof}
We have the isomorphism of the middle line of~(\ref{eq:bt7:4})
$$
\mu^{\im}_K:\X_c(s^F,v^F)\otimes_Q\bigotimes_{m=1}^\n\!{}_k\,\X(s_{f^m},v_{f^m})\ito\X(s,v).
$$
Now note that $s^F=s^{(\span I,v)}$ and $v^F_{\span I}=w(v_{f^1}\cdots v_{f^\n})^{-1}$.
Hence
$$
\X_c(s^F,v^F)=\X_c(s^{(\span I,v)},w(v_{f^1}\cdots v_{f^\n})^{-1})\cong\X(t,xw(xv_{f^1}\cdots v_{f^\n})^{-1}\gamma^{\max}).
$$
To describe the structure of an $S$-$Q$-bimodule on the right set above and
to describe exactly the map $\mu_T^{\im}$, we just repeat the proof of Theorem~\ref{theorem:bt7:2}.
\end{proof}

\subsection{Dual modules}\label{Dual modules} Finally, we describe how the above results induce the maps between the dual modules.
In what follows, we assume that $\k$ is a good ring. For a sequence of simple reflections $s$ on $I$,
a nested structure $R$ on $I$ and $v:R\to W$,
we consider the following set of maps:
$$
D\X(s,v)=\{\alpha:\Gamma(s,v)\to\Frac(S)\suchthat(\alpha,g)\in S\;\forall g\in\X(s,v)\},
$$
where $\Frac(S)$ is the field of fractions of $S$ and $(,)$ is the standard Euclidian product:
$$
(\alpha,g)=\sum_{\lm\in\Gamma(s,v)}\alpha(\lm)g(\lm).
$$
Let $\tilde R$ be a subset of $R$ and $\tilde v=v|_{\tilde R}$. Then for any $\alpha\in D\X(s,v)$ the continuation by zero
$\tilde\alpha$ to $\Gamma(s,\tilde v)$ obviously belongs to $D\X(s,\tilde v)$. Indeed for any $g\in\X(s,\tilde v)$, we get
$$
(\tilde\alpha,g)=(\alpha,g|_{\Gamma(s,v)})\in S,
$$
as $g|_{\Gamma(s,v)}\in\X(s,v)$.

\begin{corollary}\label{corollary:1}
It the notation of Theorem~\ref{theorem:bt7:1}, there is the map
$$
D\mu_T^{\im}:D\X(t)\times\prod_{m=1}^\n D\X(s_{f^m},v_{f^m})\to D\X(s,v)
$$
given by
\begin{equation}\label{eq:bt7:9}
D\mu_T^{\im}(\alpha,\alpha_1,\ldots,\alpha_\n)(\lm)=\Big(x^{-1}\alpha\big(\gamma\circ p^F_K(\lm)^x\big)\Big)\prod_{m=1}^\n\lm^{f^m_1-1}\alpha_m(\lm_{f^m}).
\end{equation}
\end{corollary}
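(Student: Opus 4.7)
The content of the corollary is that the function $D\mu_T^{\im}(\alpha,\alpha_1,\ldots,\alpha_\n):\Gamma(s,v)\to\Frac(S)$ defined formally by~(\ref{eq:bt7:9}) actually lies in $D\X(s,v)$, i.e.\ pairs into $S$ against every element of $\X(s,v)$. The plan is to check this on the generators of $\X(s,v)$ provided by Theorem~\ref{theorem:bt7:1} and then reduce the resulting sum, via an explicit factorisation of $\Gamma(s,v)$, to a pairing already known to land in $S$.

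By Theorem~\ref{theorem:bt7:1}, $\X(s,v)$ is $\k$-spanned by elements $g=\mu_T^{\im}(h\otimes h_1\otimes\cdots\otimes h_\n)$ with $h\in\X(t)$ and $h_m\in\X(s_{f^m},v_{f^m})$; by $\k$-bilinearity of the pairing it therefore suffices to show $(D\mu_T^{\im}(\alpha,\alpha_1,\ldots,\alpha_\n),g)\in S$ for each such $g$. First I would use that the map $\lm\mapsto(p^F_K(\lm),\lm_{f^1},\ldots,\lm_{f^\n})$ is a bijection
$$
\Gamma(s,v)\;\stackrel\sim\to\;\Gamma(s^F,v^F)\times\prod_{m=1}^\n\Gamma(s_{f^m},v_{f^m})
$$
(the $K$-fixed point restriction of the fibre bundle $p^F$ from~\cite[Theorem~7]{NFBBSV}), so the sum over $\lm\in\Gamma(s,v)$ decomposes into nested sums over $\mu=p^F_K(\lm)$ and $\nu_m=\lm_{f^m}$.

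The technical heart of the argument is the telescoping identity
$$
\lm^{f^m_1-1}=\mu^{f^m_1-1}\,v_{f^1}v_{f^2}\cdots v_{f^{m-1}},\qquad m=1,\ldots,\n,
$$
and I expect its verification to be the main obstacle. I would prove it by splitting $\lm_{\min I}\cdots\lm_{f^m_1-1}$ into alternating non-$F$ blocks $A_k$ and $[f^k]$-blocks, using $\lm\in\Gamma(s,v)$ to replace each $[f^k]$-block by $v_{f^k}$, and matching the result with $\mu^{f^m_1-1}=\prod_k V_{k-1}A_kV_{k-1}^{-1}$ (where $V_k=v_{f^1}\cdots v_{f^k}$), which collapses by $V_{k-1}^{-1}V_k=v_{f^k}$ cancellations. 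Granted the identity, the factor $\lm^{f^m_1-1}$ in the summand separates as $\mu^{f^m_1-1}\cdot V_{m-1}$, with $V_{m-1}$ a constant in $W$, so each inner sum over $\nu_m$ detaches and produces $c_m:=(\alpha_m,h_m)\in S$ by the hypothesis $\alpha_m\in D\X(s_{f^m},v_{f^m})$.

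Using $\mu^{f^m_1-1}=\mu^{f^m_1}$ (since $f^m_1\notin I^F$), the outer sum becomes $x^{-1}\sum_\mu\alpha(\gamma\circ\mu^x)\,h(\gamma\circ\mu^x)\prod_m\mu^{f^m_1}(V_{m-1}c_m)$. A change of variable $\tilde\mu:=\gamma\circ\mu^x\in\Gamma(t)$, together with the composition identity $(\gamma\circ\mu^x)^i=(\mu^x)^i\gamma^i$ from Section~\ref{Galleries} giving $\mu^{f^m_1}=x^{-1}\tilde\mu^{f^m_1}(\gamma^{f^m_1})^{-1}x$, rewrites the product inside so as to match, term by term, the right $Q$-action formula~(\ref{eq:bt7:2}) on $\X(t)$. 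The sum collapses to $x^{-1}\bigl(\alpha,\,h\cdot(c_1\otimes\cdots\otimes c_\n)\bigr)$. Since $\X(t)$ is a right $Q$-submodule of $H_T^\bullet(\Gamma(t),\k)$ (the $Q$-action is the restriction of the one on $H_T^\bullet(\BS(t),\k)$ defined via the maps $\sigma_m$ of Section~\ref{Embeddings_of_the_Borel_constructions}), the element $h\cdot(c_1\otimes\cdots\otimes c_\n)$ lies in $\X(t)$, its pairing with $\alpha\in D\X(t)$ lies in $S$, and applying the ring automorphism $x^{-1}$ of $S$ preserves $S$, completing the proof.
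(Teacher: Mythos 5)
Your proposal is correct and follows essentially the same route as the paper: split the sum over $\Gamma(s,v)$ along the bijection with $\Gamma(t)\times\prod_m\Gamma(s_{f^m},v_{f^m})$, use the telescoping identity $\lm^{f^m_1-1}=p^F_K(\lm)^{f^m_1-1}v_{f^1}\cdots v_{f^{m-1}}$ to detach the inner sums and produce the scalars $c_m=(\alpha_m,h_m)$, then change variables to $\delta=\gamma\circ p^F_K(\lm)^x$ and recognize the remaining expression as $x^{-1}(\alpha,\,h\cdot(c_1\otimes\cdots\otimes c_\n))$. The only cosmetic difference is that you package the last step as ``$\X(t)$ is a right $Q$-submodule,'' whereas the paper observes directly that each twist factor $\delta\mapsto\delta^{f^m_1-1}(\gamma^{f^m_1-1})^{-1}xv_{f^1}\cdots v_{f^{m-1}}c_m$ is the restriction to $\Gamma(t)$ of $\Sigma(t,f^m_1-1,(\gamma^{f^m_1-1})^{-1}xv_{f^1}\cdots v_{f^{m-1}})^*(c_m)$, hence lies in $\X(t)$, and concludes by closure of $\X(t)$ under cup product --- the same content stated differently.
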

\begin{proof} Let us denote $\beta=D\mu_T^{\im}(\alpha,\alpha_1,\ldots,\alpha_\n)$ for brevity.
As $\beta$ is clearly a map from $\Gamma(s,v)$ to $\Frac(S)$, we only have to prove that $\beta\in D\X(s,v)$.
Applying the definition of $D\X(s,v)$ above, we see that we have to sum over $\lm\in\Gamma(s,v)$. It is easy to prove that the map
$\lm\mapsto(p_K^F(\lm),\lm_{f^1},\ldots,\lm_{f^\n})$ is a bijection from
$\Gamma(s,v)$ to $\Gamma(s^F)\times\Gamma(s_{f^1},v_{f^1})\times\cdots\times\Gamma(s_{f^\n},v_{f^\n})$.
Indeed the inverse map is given by the map $\theta$ from~\cite[Section~3.4]{NFBBSV}.
Hence the map $\lm\mapsto(\gamma\circ p_K^F(\lm)^x,\lm_{f^1},\ldots,\lm_{f^\n})$ is a bijection from
$\Gamma(s,v)$ to $\Gamma(t)\times\Gamma(s_{f^1},v_{f^1})\times\cdots\times\Gamma(s_{f^\n},v_{f^\n})$.
Therefore, we sum over $\delta=\gamma\circ p_K^F(\lm)^x$ and
the segments $\lm_{f^1},\ldots,\lm_{f^\n}$.
We get $p_K^F(\lambda)^{f^m_1-1}=\lm^{f^m_1-1}(v_1\cdots v_{m-1})^{-1}$ for any $\lm\in\Gamma(s,v)$.
It follows from this formula that
\begin{multline*}
\delta^{f^m_1-1}=(\gamma\circ p^F_K(\lm)^x)^{f^m_1-1}=(p^F_K(\lm)^x)^{f^m_1-1}\gamma^{f_1^m-1}
=xp^F_K(\lm)^{f^m_1-1}x^{-1}\gamma^{f_1^m-1}\\
=x \lm^{f^m_1-1}(v_1\cdots v_{m-1})^{-1} x^{-1}\gamma^{f_1^m-1}.
\end{multline*}
Hence
$$
\lm^{f^m_1-1}=x^{-1}\delta^{f^m_1-1}(\gamma^{f_1^m-1})^{-1}xv_1\cdots v_{m-1}.
$$
Now we compute the scalar product as follows:
\begin{equation}\label{eq:bt7:8}
\begin{array}{l}
(\beta,\mu_T^{\im}(h\otimes h_1\otimes\cdots\otimes h_\n))\\
\hspace{90pt}\displaystyle=\sum_{\delta\in\Gamma(t)}\big(x^{-1}(\alpha(\delta)h(\delta))\big)\prod_{m=1}^\n x^{-1}\delta^{f^m_1-1}(\gamma^{f_1^m-1})^{-1}xv_1\cdots v_{m-1}c_m,
\end{array}
\end{equation}
where $c_m=(\alpha_m,h_m)$, which is an element of $S$.

For each $m=1,\ldots,\n$, we consider the function $g_m:\Gamma(t)\to S$ defined by
$g_m(\delta)=\delta^{f^m_1-1}(\gamma^{f_1^m-1})^{-1}xv_1\cdots v_{m-1}c_m$. Note that $g_m$ belongs to $\X(t)$,
which we identify with $\X_c(t)$,
as this map is the restriction of $\Sigma(t,f_1^m-1,(\gamma^{f_1^m-1})^{-1}xv_1\cdots v_{m-1})^*(c_m)$ to $\Gamma(t)$.
From~(\ref{eq:bt7:8}), we get
$$
(\beta,\mu_T^{\im}(h\otimes h_1\otimes\cdots\otimes h_\n))=x^{-1}(\alpha,hg_1\cdots g_\n).
$$
The right-hand side belongs to $S$, as we take the (cup) product $hg_1\cdots g_\n$ of elements of
$\X(t)$.
\end{proof}

The same arguments allow us to prove the following result.

\begin{corollary}\label{corollary:2}
It the notation of Theorem~\ref{theorem:bt7:2}, there is the map
$$
D\X(t,xw(xv_{f^1}\cdots v_{f^\n})^{-1}\gamma^{\max})\times\prod_{m=1}^\n D\X(s_{f^m},v_{f^m})\to D\X(s,v)
$$
given by~(\ref{eq:bt7:9}).
\end{corollary}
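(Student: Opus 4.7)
The plan is to follow the proof of Corollary~\ref{corollary:1} almost verbatim, swapping in Theorem~\ref{theorem:bt7:2} for Theorem~\ref{theorem:bt7:1} and tracking the effect of the closure condition $\span I\in R$. Set $\beta=D\mu_T^{\im}(\alpha,\alpha_1,\ldots,\alpha_\n)$. This is manifestly a map $\Gamma(s,v)\to\Frac(S)$ defined by~(\ref{eq:bt7:9}); the only content is to check that $(\beta,g)\in S$ for every $g\in\X(s,v)$. By Theorem~\ref{theorem:bt7:2}, such $g$ ranges over elements $\mu_T^{\im}(h\otimes h_1\otimes\cdots\otimes h_\n)$ with $h\in\X(t,y)$ (writing $y=xw(xv_{f^1}\cdots v_{f^\n})^{-1}\gamma^{\max}$) and $h_m\in\X(s_{f^m},v_{f^m})$.

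The key step is to reindex the sum $(\beta,g)=\sum_{\lm\in\Gamma(s,v)}\beta(\lm)g(\lm)$ along the bijection
$$
\Gamma(s,v)\;\longleftrightarrow\;\Gamma(t,y)\times\prod_{m=1}^{\n}\Gamma(s_{f^m},v_{f^m}),\qquad\lm\longmapsto\bigl(\gamma\circ p_K^F(\lm)^x,\,\lm_{f^1},\ldots,\lm_{f^\n}\bigr),
$$
whose inverse is again the map $\theta$ of~\cite[Section~3.4]{NFBBSV}. Bijectivity onto $\Gamma(t)\times\prod_m\Gamma(s_{f^m},v_{f^m})$ is exactly what was used in Corollary~\ref{corollary:1}; the new content is that the image of the first coordinate is $\Gamma(t,y)$ and not the whole $\Gamma(t)$. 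I verify this directly from $\delta=\gamma\circ p_K^F(\lm)^x$: using $(\gamma\circ\mu)^{\max}=\mu^{\max}\gamma^{\max}$ and the compatibility of conjugation with products gives $\delta^{\max}=x\,p_K^F(\lm)^{\max}x^{-1}\gamma^{\max}$, and the telescoping identity $p_K^F(\lm)^{\max}=\lm^{\max}(v_{f^1}\cdots v_{f^\n})^{-1}$ combined with the closure condition $\lm^{\max}=w$ yields $\delta^{\max}=xw(v_{f^1}\cdots v_{f^\n})^{-1}x^{-1}\gamma^{\max}=y$.

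With the bijection in place, the pairing reduces, precisely as in Corollary~\ref{corollary:1}, to $(\beta,g)=x^{-1}(\alpha,hg_1\cdots g_\n)$, where $g_m(\delta)=\delta^{f_1^m-1}(\gamma^{f_1^m-1})^{-1}xv_{f^1}\cdots v_{f^{m-1}}c_m$ and $c_m=(\alpha_m,h_m)\in S$. As in that earlier proof, $g_m$ on $\Gamma(t)$ is the restriction of $\Sigma(t,f_1^m-1,(\gamma^{f_1^m-1})^{-1}xv_{f^1}\cdots v_{f^{m-1}})^{*}(c_m)$ to $\Gamma(t)$, hence lies in $\X(t)$; restricting further to $\Gamma(t,y)$ places $g_m$ in $\X(t,y)$ because the surjection $H_T^\bullet(\BS(t),\k)\twoheadrightarrow H_T^\bullet(\BS(t,y),\k)$ (good ring) factors the defining restriction of $\X(t,y)$. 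Since $\X(t,y)$ is closed under cup products and contains $h$, the product $hg_1\cdots g_\n$ lies in $\X(t,y)$, so pairing against $\alpha\in D\X(t,y)$ delivers an element of $S$. The only step not already contained in Corollary~\ref{corollary:1} is the identification $\delta^{\max}=y$, which is what forces the first factor of the target to be the constrained space $\X(t,y)$; beyond this, the argument is a mechanical transcription with $\Gamma(t)$ replaced by $\Gamma(t,y)$ throughout.
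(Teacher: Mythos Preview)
Your proof is correct and follows the same approach the paper intends: the paper's own proof of this corollary is the single sentence ``The same arguments allow us to prove the following result,'' and you have carried out precisely those arguments with the necessary modifications for the closed nested structure, including the verification that $\delta^{\max}=y$ (which the paper leaves implicit).
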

\subsection{Example of a decomposition}\label{Example of a decomposition}
Suppose that $G=\SL_5(\C)$ with the following Dynkin diagram:

\begin{center}
\setlength{\unitlength}{1.2mm}
\begin{picture}(45,0)
\put(0,0){\circle{1}}
\put(10,0){\circle{1}}
\put(20,0){\circle{1}}
\put(30,0){\circle{1}}

\put(0.5,0){\line(1,0){9}}
\put(10.5,0){\line(1,0){9}}
\put(20.5,0){\line(1,0){9}}

\put(-1,-3.5){$\scriptstyle\alpha_1$}
\put(9,-3.5){$\scriptstyle\alpha_2$}
\put(19,-3.5){$\scriptstyle\alpha_3$}
\put(29,-3.5){$\scriptstyle\alpha_4$}
\end{picture}
\end{center}

\hspace{20mm}

\noindent
The simple reflections are $\omega_i=s_{\alpha_i}$. We consider the following sequence and the element of the Weyl group
$$
s=(\omega_4, \omega_3,  \omega_4, \omega_2,\omega_1,\omega_2,\omega_1,\omega_2, \omega_3,\omega_4),\quad
w=\omega_1\omega_3\omega_4\omega_3.
$$

We would like to study the module $\X(s,w)$ under the assumption that the ring of coefficients $\k$ is good.
It is easy to note that $\Gamma(s,w)=\Gamma(s,v)$, where $v$ is the map from $R=\{(1,10),(4,8)\}$ to $W$ given by
$v((1,10))=w$ and $v((4,8))=\omega_1$.
Thus $\X(s,w)=\X(s,v)$. We can apply Theorem~\ref{theorem:bt7:2} to $\X(s,v)$ for $F=\{(4,8)\}$.
Let $t=(\omega_4,\omega_3,\omega_4,\omega_3,\omega_4)$ and $\gamma=(1,1,1,1,1,1)$.
As we have $s^{(\span I,v)}=s^F=t$,
the triple $(1,t,\gamma)$ is a gallerification of $s^{(\span I,v)}$.
We obtain the isomorphism of left $S$-modules
$$
\mu_T^{\im}:\X(t,\omega_3\omega_4\omega_3)\otimes_S\X((\omega_2,\omega_1,\omega_2,\omega_1,\omega_2),\omega_1)\ito\X(s,w).
$$
Here all actions of $S$ are canonical except the right action on $\X(t,\omega_3\omega_4\omega_3)$.
It is given by $(h\cdot c)(\lm)=h(\lm)(\lm_1\lm_2\lm_3c)$, where $h\in\X(t,\omega_3\omega_4\omega_3)$ and $c\in S$.
The above isomorphism is given by the formula
$$
\mu_T^{\im}(h\otimes g)(\lm)=h(\lm_1,\lm_2,\lm_3,\lm_9,\lm_{10})(\lm_1\lm_2\lm_3g(\lm_4,\lm_5,\lm_6,\lm_7,\lm_8)).
$$
We can also represent the minimal degree element $a\in D\X(s,w)$ as a twisted product of the minimal degree elements
$b\in D\X(t,\omega_3\omega_4\omega_3)$ and $c\in D\X((\omega_2,\omega_1,\omega_2,\omega_1,\omega_2),\omega_1)$.
As all three Bott-Samelson varieties are smooth we get the following values of these functions:
$a$ maps
\begin{multline*}
(1,1,\omega_{{4}},1,1,1,\omega_{{1}},1,\omega_{{3}},\omega_{{4}}),
(\omega_{{4}},1,1,1,1,1,\omega_{{1}},1,\omega_{{3}},\omega_{{4}}),
(1,1,
\omega_{{4}},\omega_{{2}},1,\omega_{{2}},\omega_{{1}},1,\omega_{{3}},
\omega_{{4}}),\\
(\omega_{{4}},1,1,\omega_{{2}},1,\omega_{{2}},\omega_{{1
}},1,\omega_{{3}},\omega_{{4}}),(1,1,\omega_{{4}},1,\omega_{{1}},1,1,1
,\omega_{{3}},\omega_{{4}}),(\omega_{{4}},1,1,1,\omega_{{1}},1,1,1,
\omega_{{3}},\omega_{{4}}),\\
(1,1,\omega_{{4}},1,\omega_{{1}},\omega_{{2
}},1,\omega_{{2}},\omega_{{3}},\omega_{{4}}),(\omega_{{4}},1,1,1,
\omega_{{1}},\omega_{{2}},1,\omega_{{2}},\omega_{{3}},\omega_{{4}}),\\
(1
,1,\omega_{{4}},\omega_{{2}},\omega_{{1}},\omega_{{2}},\omega_{{1}},
\omega_{{2}},\omega_{{3}},\omega_{{4}}),(\omega_{{4}},1,1,\omega_{{2}}
,\omega_{{1}},\omega_{{2}},\omega_{{1}},\omega_{{2}},\omega_{{3}},
\omega_{{4}}),\\
(\omega_{{4}},\omega_{{3}},1,1,1,1,\omega_{{1}},1,1,
\omega_{{4}}),(\omega_{{4}},\omega_{{3}},1,\omega_{{2}},1,\omega_{{2}}
,\omega_{{1}},1,1,\omega_{{4}}),(\omega_{{4}},\omega_{{3}},1,1,\omega_
{{1}},1,1,1,1,\omega_{{4}}),\\
(\omega_{{4}},\omega_{{3}},1,1,\omega_{{1}
},\omega_{{2}},1,\omega_{{2}},1,\omega_{{4}}),
(\omega_{{4}},\omega_{{3
}},1,\omega_{{2}},\omega_{{1}},\omega_{{2}},\omega_{{1}},\omega_{{2}},
1,\omega_{{4}}),\\(1,\omega_{{3}},\omega_{{4}},1,1,1,\omega_{{1}},1,
\omega_{{3}},1),(1,\omega_{{3}},\omega_{{4}},\omega_{{2}},1,\omega_{{2
}},\omega_{{1}},1,\omega_{{3}},1),(1,\omega_{{3}},\omega_{{4}},1,
\omega_{{1}},1,1,1,\omega_{{3}},1),\\(1,\omega_{{3}},\omega_{{4}},1,
\omega_{{1}},\omega_{{2}},1,\omega_{{2}},\omega_{{3}},1),(1,\omega_{{3
}},\omega_{{4}},\omega_{{2}},\omega_{{1}},\omega_{{2}},\omega_{{1}},
\omega_{{2}},\omega_{{3}},1),\\(\omega_{{4}},\omega_{{3}},\omega_{{4}},1
,1,1,\omega_{{1}},1,1,1),(\omega_{{4}},\omega_{{3}},\omega_{{4}},
\omega_{{2}},1,\omega_{{2}},\omega_{{1}},1,1,1),(\omega_{{4}},\omega_{
{3}},\omega_{{4}},1,\omega_{{1}},1,1,1,1,1),\\(\omega_{{4}},\omega_{{3}}
,\omega_{{4}},1,\omega_{{1}},\omega_{{2}},1,\omega_{{2}},1,1),(\omega_
{{4}},\omega_{{3}},\omega_{{4}},\omega_{{2}},\omega_{{1}},\omega_{{2}}
,\omega_{{1}},\omega_{{2}},1,1)
\end{multline*}
to
\begin{multline*}
{\frac {1}{\alpha_{{2}}\alpha_{{3}}\alpha_{{4}}\alpha_{{1}}}},-{\frac {1}{\alpha_{{4}} \left( \alpha_{{3}}+\alpha_{{4}} \right) \alpha
_{{2}}\alpha_{{1}}}},-{\frac {1}{\alpha_{{3}}\alpha_{{4}}\alpha_{{2}}
 \left( \alpha_{{1}}+\alpha_{{2}} \right) }},{\frac {1}{ \left( \alpha
_{{3}}+\alpha_{{4}} \right) \alpha_{{4}}\alpha_{{2}} \left( \alpha_{{1
}}+\alpha_{{2}} \right) }},\\-{\frac {1}{\alpha_{{3}}\alpha_{{4}}
 \left( \alpha_{{1}}+\alpha_{{2}} \right) \alpha_{{1}}}},{\frac {1}{
 \left( \alpha_{{3}}+\alpha_{{4}} \right)  \left( \alpha_{{1}}+\alpha_
{{2}} \right) \alpha_{{4}}\alpha_{{1}}}},-{\frac {1}{\alpha_{{3}}
\alpha_{{4}}\alpha_{{2}} \left( \alpha_{{1}}+\alpha_{{2}} \right) }},\\{
\frac {1}{ \left( \alpha_{{3}}+\alpha_{{4}} \right) \alpha_{{4}}\alpha
_{{2}} \left( \alpha_{{1}}+\alpha_{{2}} \right) }},{\frac {1}{\alpha_{
{3}}\alpha_{{4}}\alpha_{{2}} \left( \alpha_{{1}}+\alpha_{{2}} \right)
}},-{\frac {1}{ \left( \alpha_{{3}}+\alpha_{{4}} \right) \alpha_{{4}}
\alpha_{{2}} \left( \alpha_{{1}}+\alpha_{{2}} \right) }},\\-{\frac {1}{
 \left( \alpha_{{2}}+\alpha_{{3}}+\alpha_{{4}} \right) \alpha_{{3}}
 \left( \alpha_{{3}}+\alpha_{{4}} \right) \alpha_{{1}}}},{\frac {1}{
\alpha_{{3}} \left( \alpha_{{3}}+\alpha_{{4}} \right)  \left( \alpha_{
{2}}+\alpha_{{3}}+\alpha_{{4}} \right)  \left( \alpha_{{1}}+\alpha_{{2
}}+\alpha_{{3}}+\alpha_{{4}} \right) }},\\{\frac {1}{\alpha_{{3}}
 \left( \alpha_{{1}}+\alpha_{{2}}+\alpha_{{3}}+\alpha_{{4}} \right)
 \left( \alpha_{{3}}+\alpha_{{4}} \right) \alpha_{{1}}}},{\frac {1}{
\alpha_{{3}} \left( \alpha_{{3}}+\alpha_{{4}} \right)  \left( \alpha_{
{2}}+\alpha_{{3}}+\alpha_{{4}} \right)  \left( \alpha_{{1}}+\alpha_{{2
}}+\alpha_{{3}}+\alpha_{{4}} \right) }},\\-{\frac {1}{\alpha_{{3}}
 \left( \alpha_{{3}}+\alpha_{{4}} \right)  \left( \alpha_{{2}}+\alpha_
{{3}}+\alpha_{{4}} \right)  \left( \alpha_{{1}}+\alpha_{{2}}+\alpha_{{
3}}+\alpha_{{4}} \right) }},-{\frac {1}{ \left( \alpha_{{2}}+\alpha_{{
3}} \right) \alpha_{{4}}\alpha_{{1}}\alpha_{{3}}}},\\{\frac {1}{\alpha_{
{4}}\alpha_{{3}} \left( \alpha_{{2}}+\alpha_{{3}} \right)  \left(
\alpha_{{1}}+\alpha_{{2}}+\alpha_{{3}} \right) }},{\frac {1}{\alpha_{{
4}} \left( \alpha_{{1}}+\alpha_{{2}}+\alpha_{{3}} \right) \alpha_{{1}}
\alpha_{{3}}}},{\frac {1}{\alpha_{{4}}\alpha_{{3}} \left( \alpha_{{2}}
+\alpha_{{3}} \right)  \left( \alpha_{{1}}+\alpha_{{2}}+\alpha_{{3}}
 \right) }},\\-{\frac {1}{\alpha_{{4}}\alpha_{{3}} \left( \alpha_{{2}}+
\alpha_{{3}} \right)  \left( \alpha_{{1}}+\alpha_{{2}}+\alpha_{{3}}
 \right) }},{\frac {1}{ \left( \alpha_{{2}}+\alpha_{{3}}+\alpha_{{4}}
 \right) \alpha_{{3}}\alpha_{{4}}\alpha_{{1}}}},\\-{\frac {1}{ \left(
\alpha_{{2}}+\alpha_{{3}}+\alpha_{{4}} \right) \alpha_{{3}}\alpha_{{4}
} \left( \alpha_{{1}}+\alpha_{{2}}+\alpha_{{3}}+\alpha_{{4}} \right) }
},-{\frac {1}{ \left( \alpha_{{1}}+\alpha_{{2}}+\alpha_{{3}}+\alpha_{{
4}} \right) \alpha_{{3}}\alpha_{{4}}\alpha_{{1}}}},\\-{\frac {1}{
 \left( \alpha_{{2}}+\alpha_{{3}}+\alpha_{{4}} \right) \alpha_{{3}}
\alpha_{{4}} \left( \alpha_{{1}}+\alpha_{{2}}+\alpha_{{3}}+\alpha_{{4}
} \right) }},{\frac {1}{ \left( \alpha_{{2}}+\alpha_{{3}}+\alpha_{{4}}
 \right) \alpha_{{3}}\alpha_{{4}} \left( \alpha_{{1}}+\alpha_{{2}}+
\alpha_{{3}}+\alpha_{{4}} \right) }},
\end{multline*}
respectively; $b$ maps
$$
(1,1,\omega_{{4}},\omega_{{3}},\omega_{{4}}),\; (\omega_{{4}},1,1,\omega_{{3}},\omega_{{4}}),
(\omega_{{4}},\omega_{{3}},1,1,\omega_{{4}}),\; (1,\omega_{{3}},\omega_{{4}},\omega_{{3}},1),\;
(\omega_{{4}},\omega_{{3}},\omega_{{4}},1,1)
$$
to
$$
{\frac {1}{\alpha_{{4}}\alpha_{{3}}}},\;-{\frac {1}{ \left( \alpha_{{3}}+\alpha_{{4}} \right) \alpha_{{4}}}},\;
-{\frac {1}{\alpha_{{3}} \left(
\alpha_{{3}}+\alpha_{{4}} \right) }},\;-{\frac {1}{\alpha_{{4}}\alpha_{{
3}}}},\;{\frac {1}{\alpha_{{4}}\alpha_{{3}}}},
$$
respectively; $c$ maps
$$
(1,1,1,\omega_{{1}},1),\;(\omega_{{2}},1,\omega_{{2}},\omega_{{1}},1),\;(1,\omega_{{1}},1,1,1),
\;(1,\omega_{{1}},\omega_{{2}},1,\omega_{{2}}),\;(\omega_{{2}},\omega_{{1}},\omega_{{2}},\omega_{{1}},\omega_{{2}})
$$
to
$$
{\frac {1}{\alpha_{{2}}\alpha_{{1}}}},\;-{\frac {1}{\alpha_{{2}} \left( \alpha_{{1}}+\alpha_{{2}} \right) }},\;
-{\frac {1}{ \left(
\alpha_{{1}}+\alpha_{{2}} \right) \alpha_{{1}}}},\;
-{\frac {1}{\alpha_{{
2}} \left( \alpha_{{1}}+\alpha_{{2}} \right) }},\;{\frac {1}{\alpha_{{2}
} \left( \alpha_{{1}}+\alpha_{{2}} \right) }},
$$
respectively. One can easily check that $a$ is a twisted product of $b$ and $c$ in the sense of~(\ref{eq:bt7:9}),
which in our case is
$$
c(\lm)=b(\lm_1,\lm_2,\lm_3,\lm_9,\lm_{10})(\lm_1\lm_2\lm_3c(\lm_4,\lm_5,\lm_6,\lm_7,\lm_8)).
$$

\def\bs{\\[-4pt]}

\end{document}